\documentclass[a4paper,reqno]{amsart}
\usepackage[T1]{fontenc}
\usepackage[utf8x]{inputenc}
\usepackage[english]{babel}
\usepackage{times}
\usepackage{amsmath,amsfonts,amssymb,upgreek,comment,mathtools,mathrsfs,slashed}
\usepackage[11pt,curve,matrix,arrow,frame,graph]{xy}
\usepackage{graphicx}
\usepackage{hyperref}
\usepackage{enumerate}
\usepackage{geometry}
\usepackage{xcolor}
\usepackage{esint}

\usepackage{amsthm}

\theoremstyle{plain}
\newtheorem*{theorem*}{Theorem}
\newcounter{intro}

\newtheorem{thm}[intro]{Theorem}
\newtheorem{crl}[intro]{Corollary}

\newtheorem{theorem}{Theorem}[section]
\theoremstyle{definition}
\newtheorem{defi}[theorem]{Definition}
\newtheorem{lemm}[theorem]{Lemma}
\newtheorem{coro}[theorem]{Corollary}
\newtheorem{prop}[theorem]{Proposition}
\newtheorem{theo}[theorem]{Theorem}

\theoremstyle{definition}
\newtheorem{rem}[theorem]{Remark}
\newtheorem{rems}[theorem]{Remarks}

\newcommand{\cref}[1]{Corollary~\ref{#1}}

\newcommand{\lref}[1]{Lemma~\ref{#1}}
\newcommand{\pref}[1]{Proposition~\ref{#1}}

\newcommand{\tref}[1]{Theorem~\ref{#1}}

\newcommand{\R}{\ensuremath{\mathbb R}}
\newcommand{\N}{\ensuremath{\mathbb N}}
\newcommand{\Z}{\ensuremath{\mathbb Z}}
\newcommand{\C}{\ensuremath{\mathbb C}}
\newcommand{\bD}{\ensuremath{\mathbb D}}

\newcommand{\bS}{\ensuremath{\mathbb S}}

\newcommand{\cF}{\mathcal{F}}

\newcommand{\cU}{\mathcal{U}}

\newcommand{\mcC}{\mathscr{C}}
\newcommand{\mcW}{\mathscr{W}}

\newcommand{\Spn}{\mathbf{S}}
\newcommand{\Dirac}{\slashed{D}}
\newcommand{\Dirb}{\slashed{\partial}}
\newcommand{\Spin}{\slashed{\Sigma}}
\newcommand{\cl}{\mathrm{cl}}
\DeclareMathOperator{\un}{\mathbf{1}}
\newcommand{\ve}{\ensuremath{\varepsilon}}

\newcommand{\Ricm}{\ensuremath{\mbox{Ric}_{\mbox{\tiny{--}}}}}

\DeclareMathOperator{\bg}{ \overline{\emph{g}}}

\DeclareMathOperator{\dv}{\it d\nu}
\DeclareMathOperator{\dA}{\it dA}
\DeclareMathOperator{\vol}{\it \nu}

\DeclareMathOperator{\BE}{BE}

\newcommand{\Ric}{\ensuremath{\mbox{Ric}}}
\newcommand{\Ricci}{\ensuremath{\mbox{Ricci}}}
\newcommand{\Rm}{\ensuremath{\mbox{Rm}}}

\newcommand{\scal}{\ensuremath{\mbox{Scal}}}

\DeclareMathOperator{\tr}{tr}

\DeclareMathOperator{\ind}{ind}

\DeclareMathOperator{\supp}{supp}

\def\cH{\mathcal H}

\def\sch{Schr\"odinger }

\def\ra{\rangle}
\def\la{\langle}

\title[Open $3$-manifolds with non negative Ricci curvature in a  spectral or integral sense]{Open $3$-manifolds with non negative Ricci curvature in a  spectral or integral sense}

\author{Gilles Carron}
\address{G. Carron, Nantes Université, CNRS, Laboratoire de Mathématiques Jean Leray, LMJL, UMR 6629, F-44000 Nantes, France.} 
\email{Gilles.Carron@univ-nantes.fr}

\begin{document}
\maketitle

\begin{abstract} We show that if  a complete Riemannian $3-$manifold has $L^{\frac 32}-$ integrable Ricci curvature, satisfies a Sobolev inequality and has a non negative Ricci curvature in a spectral sense, then it  is diffeomorphic to  $\R^3$. 
 
\end{abstract}
\section{Introduction}
According to Cohn-Vossen and Huber, a complete Riemann surface $(M^2,g)$ with integrable Gaussian curvature and such that 
$$\int_M K_g \dA_g>-2\pi$$ is conformally equivalent to $\C$ or $\C^\star$. A very nice spectral counterpart has been obtained by Castillon \cite{Cast}: if $(M^2,g)$ is a complete Riemann surface for which there is some $\lambda>1/4$ such that the Schrödinger operator $\Delta_g+\lambda K_g$ is non negative:
$$\forall \varphi\in \mcC^\infty_c(M)\colon \int_M |d\varphi|^2\dA_g\ge \lambda\int_M \lambda K_g\,\varphi^2\dA_g$$ then $(M^2,g)$ is conformally equivalent to $\C$ or $\C^\star$.
There have been several attempts to generalize the first result in higher dimension (see for instance \cite{Chang_2000, CarronHerzlich_2002}). In this paper, we obtain a 3-dimensional  topological rigidity based on a spectral pinching hypothesis.

We introduce the following notation: if $(M,g)$ is a Riemannian manifold and $x\in M$, we let $\Ric(x)$ to be the lowest eigenvalue of the Ricci tensor at $x$ so that $\Ricci_x\ge \Ric(x)\, g_x$ in the sense of quadratic forms, and we let $\Ricm(x)$ be the negative part of $\Ric(x)$, that is $\Ricm(x)=0$ if $ \Ric(x)\ge 0$ and otherwise $\Ricm(x)=-\Ric(x)$. So that $\Ricci_x\ge -\Ricm(x)\, g_x$ in the sense of quadratic forms.

Our standing hypothesis for a complete Riemannian manifold $(M^3,g)$ are 
\begin{itemize}
\item that for some positive constant $\upgamma$, it satisfies the Sobolev inequality:
\begin{equation}\label{Sob}
\tag{Sob${}_\upgamma$}\hspace{1cm}\forall \varphi\in \mcC^\infty_c(M)\colon \upgamma \left(\int_M \varphi^6\dv_g\right)^{\frac 13}\le \int_M |d\varphi|^2\dv_g,\end{equation}
\item  and the $L^{\frac32}$ integrability of the negative part of the Ricci curvature.
\end{itemize}
Our first result is the following
\begin{thm}\label{3D-neg} Let $(M^3,g)$ be  a complete Riemannian manifold satisfying the Sobolev inequality \eqref{Sob} and such that $\Ricm\in L^{3/2}$. If for some $\lambda>2$, the Schrödinger operator $\Delta-\lambda\Ricm$ is non negative, then $M^3$ is diffeomorphic to $\R^3$.
\end{thm}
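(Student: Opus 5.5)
The plan has two halves. First I would use the analytic hypotheses (Sobolev and $\Ricm\in L^{3/2}$) to control the geometry at infinity. Then I would use the spectral hypothesis with $\lambda>2$, through a weighted stability argument for minimal surfaces, to kill the topology.

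\emph{Step 1: a positive weight.} Since $\Delta-\lambda\Ricm\ge 0$ on the complete manifold $M$, the Fischer-Colbrie--Schoen argument gives a smooth positive function $u$ with $\Delta u=\lambda\Ricm\, u$. Here $\Delta$ is the nonnegative Laplacian, so $u$ is superharmonic in the geometric sense. Because $\Ricm\in L^{3/2}$ and \eqref{Sob} holds, a Moser iteration gives that $u$ is bounded above and below away from $0$ outside a compact set. Normalised, $u\to 1$ at infinity along each end.

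\emph{Step 2: structure at infinity.} I would first use \eqref{Sob} to see that every end is non-parabolic with Euclidean volume growth. Next I would show there is only one end. The Li--Tam theory produces, from two ends, a nonconstant bounded harmonic function $h$ with $dh\in L^2$. The Bochner formula gives
\[
\tfrac12\Delta|dh|^2\le -|\nabla dh|^2+\Ricm|dh|^2 .
\]
Combined with the refined Kato inequality $|\nabla dh|^2\ge \tfrac32|d|dh||^2$ and the spectral inequality applied to $\varphi=|dh|^{1/2}\chi$, this shows $h$ is constant as soon as $\lambda>1/2$. That case is covered since $\lambda>2$, so $M$ has exactly one end. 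Then the $\varepsilon$-regularity coming from $\int_{M\setminus B_R}\Ricm^{3/2}\to0$ and \eqref{Sob} should give that the end is diffeomorphic to $(R,\infty)\times \bS^2/\Gamma$. Consequently $M$ is the interior of a compact $3$-manifold with boundary $\bS^2/\Gamma$.

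\emph{Step 3: weighted stable surfaces.} Suppose $M\neq\R^3$. Then by $3$-manifold topology, using the loop theorem and Perelman in the style of Liu's classification for $\Ric\ge0$, there is a nontrivial class to minimise. This is either an essential embedded sphere or projective plane, or a nontrivial loop in $\pi_1$ (including $\Gamma\neq\{1\}$), or a non-separating surface. With the good control at infinity from Step 2, I would minimise the weighted area $\int_\Sigma u^{\beta}\,\dA$ in that class. Alternatively I would use a warped $\mu$-bubble construction to get a complete weighted-stable minimal surface $\Sigma$ in the conformally changed metric $u^{2\beta/2}g$.

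The weighted second variation, together with the Gauss equation $\Ricci(\nu,\nu)=\tfrac12\scal-K_\Sigma-\tfrac12|A|^2+\tfrac12 H^2$, only sees $\Ricci(\nu,\nu)\ge-\Ricm$. The term $-\Ricm$ is compensated by the term $\beta\, u^{-1}\Delta u=\beta\lambda\Ricm$ coming from the weight. After choosing $\beta$ and absorbing the gradient terms of $u$ by Cauchy--Schwarz, I expect an inequality of Castillon type on $\Sigma$:
\[
\int_\Sigma |d\varphi|^2\ge \lambda'\int_\Sigma K_\Sigma\varphi^2,\qquad \lambda'>\tfrac14 ,
\]
and in the compact case the strict positivity $\int_\Sigma K_\Sigma>0$. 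The compact case forbids incompressible surfaces other than spheres, and those are then excluded by irreducibility/filling. The noncompact case, via Castillon, forces $\Sigma$ to be conformally planar, which contradicts the existence of the essential loop.

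\emph{Main obstacle.} The hard step is the algebra in Step 3: one must find a single exponent $\beta$ for which the cross terms $\langle du,d\varphi\rangle$, the term $|du|^2/u^2$ and the $\Ricm$ term all close up. I expect this to be exactly where the threshold $\lambda>2$ appears. I would first try the ansatz $\varphi\mapsto u^{-\beta/2}\varphi$ in the stability inequality and optimise $\beta$.
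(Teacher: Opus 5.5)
Your route is entirely different from the paper's, and it has a decisive gap: the end structure asserted at the end of Step 2. No $\varepsilon$-regularity of that kind exists. Smallness of $\int_{M\setminus B_R}\Ricm^{3/2}$ says nothing about the positive part of the Ricci tensor and gives no pointwise curvature bound or decay. \eqref{Sob} only bounds volumes from below. So nothing forces the end to be $(R,\infty)\times\bS^2/\Gamma$. (The Ricci-flow results recalled in the introduction need the full curvature tensor, smallness and extra hypotheses, and even then only conclude $M\simeq\R^n$ globally.) What you assert is essentially simple connectivity at infinity, which is the hardest part of the theorem. Step 3 depends on it both for the topological reduction and for the existence of minimisers.

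The paper proves this part by another mechanism. The space $(M,\bg,\mu)$, with $\bg=u^{2/\lambda}g$ and $\mu=u^{2/\lambda}\dv_g$, satisfies $\BE(0,N)$ with $N=3+\frac{1}{\lambda-1}$, and has the same Green function as $(M,g)$. Song's gradient estimate and the monotonicity of $r\mapsto r^{-2}\int_{\{b=r\}}|db|_g\,u^{1/\lambda}$ give $\int_1^\infty A(s)s^{-2}ds<\infty$ for $A(r)=r^{-2}\int_{\{b=r\}}|db|_g^2$. The Sobolev inequality gives $\frac1r\int_r^{2r}A\ge c(\upgamma)$. The Colding--Minicozzi inequality shows that $\chi(\{b=s\})\le0$ for all large $s$ would force $A\to0$. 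Hence infinitely many level sets of $G_o$ are spheres, and van Kampen (with $M$ already simply connected) gives simple connectivity at infinity.

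Even granting Step 2, Step 3 does not close, and $\lambda>2$ does not enter where you expect. With $\beta=2/\lambda$, the computation of \lref{scalarbarg} already gives $\scal\ge0$ for $u^{2/\lambda}g$ once $\lambda\ge\frac12$. So stable minimal surfaces satisfy $\int_\Sigma|d\varphi|^2+\int_\Sigma K_\Sigma\varphi^2\ge0$ and there is nothing to optimise. This is also too weak. Stable spheres are compatible with it, as in $\bS^2\times\R$. In flat $\bS^1\times\R^2$, where $\Ricm=0$ so the spectral hypothesis holds for every $\lambda$, the area-minimising plane $\{x_0\}\times\R^2$ is conformally $\C$ and meets the essential loop once. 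So the contradiction must come from \eqref{Sob}, which your plan uses only through the unproved end structure.

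In the paper, \eqref{Sob} and the spectral bound lift to every normal cover (\pref{revsob}). A Bochner argument with test function $\chi|dh|$, $\chi$ a cutoff, shows each cover has one end; unlike your $\chi|dh|^{1/2}$, this needs no volume upper bound since $dh\in L^2$. Hence $\tilde M$ is contractible and $\pi_1(M)$ is torsion free. Then $\lambda>2$ is used exactly to get $N<4$ in a Milnor--Anderson count. If $\gamma\neq e$, the $k+1$ disjoint sets $\gamma^\ell(\cF\cap B(\tilde o,k))$ lie in $B(\tilde o,k(1+L))$. Each has measure $\mu(B_{\bg}(o,k))\ge ck^3$ by \pref{muvollower} (Sobolev plus the $A_1$ property of $u$). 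Bishop--Gromov for $\BE(0,N)$ bounds the large ball by $Ck^N$, so $k^4\lesssim k^N$, a contradiction.

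Finally, the two-sided bound on $u$ in Step 1 does not follow from Moser iteration at the critical exponent $L^{3/2}$ without volume upper bounds. The paper never needs it and uses only \pref{Mukhenhoupt}.
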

Using Hölder inegality, it is easy to check that when $\lambda\le \upgamma\left( \int_{M}(\Ricm)^{\frac32}\dv_g\right)^{-\frac13}$ then the  Schrödinger operator $\Delta-\lambda\Ricm$ is non negative, so that we obtained the following corollary:
\begin{crl} If  $(M^3,g)$ is  a complete Riemannian manifold satisfying the Sobolev inequality \eqref{Sob} and such that
$$\left( \int_{M}\left(\Ricm\right)^{\frac32}\dv_g\right)^{\frac13}<\upgamma/2$$ then $M^3$ is diffeomorphic to $\R^3$.
\end{crl}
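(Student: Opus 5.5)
The corollary follows by applying Theorem~\ref{3D-neg} with a suitable $\lambda>2$. The only thing to check is the spectral hypothesis, namely that $\Delta-\lambda\Ricm$ is non negative for some $\lambda>2$. The plan is to get this from the Hölder remark stated just before the corollary.

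\textbf{Setting up $\lambda$.} Put $A:=\left(\int_M(\Ricm)^{\frac32}\dv_g\right)^{\frac13}$. This is finite by hypothesis, so in particular $\Ricm\in L^{3/2}$. If $A=0$, then $\Ricm\equiv 0$ and $\Delta-\lambda\Ricm=\Delta$ is non negative for every $\lambda$; Theorem~\ref{3D-neg} then applies with, say, $\lambda=3$. If $A>0$, set $\lambda:=\upgamma A^{-1}$. The hypothesis $A<\upgamma/2$ is exactly the statement $\lambda>2$.

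\textbf{Non negativity.} With this $\lambda$, the remark after Theorem~\ref{3D-neg} says that $\Delta-\lambda\Ricm$ is non negative whenever $\lambda\le \upgamma\left(\int_M(\Ricm)^{\frac32}\dv_g\right)^{-\frac13}=\upgamma A^{-1}$. Our choice meets this with equality. The remark comes from combining two inequalities for $\varphi\in\mcC^\infty_c(M)$:
\[
\int_M \Ricm\,\varphi^2\dv_g\le \left(\int_M(\Ricm)^{\frac32}\dv_g\right)^{\frac23}\left(\int_M\varphi^6\dv_g\right)^{\frac13},
\qquad
\upgamma\left(\int_M\varphi^6\dv_g\right)^{\frac13}\le\int_M|d\varphi|^2\dv_g .
\]
The first is Hölder's inequality with exponents $3/2$ and $3$; the second is \eqref{Sob}. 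The normalisation of the $L^{3/2}$ quantity is taken exactly as in that remark, so that $\lambda\int_M\Ricm\varphi^2\dv_g\le\int_M|d\varphi|^2\dv_g$ with $\lambda=\upgamma A^{-1}$.

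\textbf{Conclusion and main obstacle.} All hypotheses of Theorem~\ref{3D-neg} now hold: completeness, \eqref{Sob}, $\Ricm\in L^{3/2}$, and non negativity of $\Delta-\lambda\Ricm$ with $\lambda>2$. Hence $M^3$ is diffeomorphic to $\R^3$.

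The only delicate point is the exponent bookkeeping in the Hölder step. The raw Hölder estimate displayed above produces the power $\left(\int(\Ricm)^{3/2}\right)^{2/3}=A^2$, whereas the remark (and hence this argument) uses $A$. So the argument proves the corollary as worded precisely to the extent that the remark preceding it holds as stated. I rely on that remark as given in the paper; I have not derived the power $A^{-1}$ independently.
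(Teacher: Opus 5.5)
You follow exactly the paper's route: the paper deduces the corollary from the one-line Hölder remark preceding it, plugged into \tref{3D-neg}. But the step you flagged is a genuine gap, and deferring to that remark does not close it. Your two displayed inequalities combine to
\[
\lambda\int_M\Ricm\,\varphi^2\dv_g\le \lambda\,\upgamma^{-1}A^2\int_M|d\varphi|^2\dv_g .
\]
So non negativity of $\Delta-\lambda\Ricm$ follows for $\lambda\le\upgamma A^{-2}$. With your choice $\lambda=\upgamma A^{-1}$ the right-hand side is $A\int_M|d\varphi|^2\dv_g$, which suffices only if $A\le 1$.

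No Hölder-type argument can produce $A^{-1}$. Such an argument uses only that $\Ricm$ is a non negative $L^{3/2}$ function. For a potential $V$ of that kind, the optimal coupling $\inf_\varphi\int|d\varphi|^2/\int V\varphi^2$ is multiplied by $t^{-1}$ under $V\mapsto tV$. By contrast, $\upgamma\left(\int (tV)^{3/2}\right)^{-1/3}$ is multiplied only by $t^{-1/2}$. A bound of that form valid for all $V$, applied to $tV$ with $t\to\infty$, would therefore force the optimal coupling to be infinite. The exponent $-\frac13$ in the paper's remark is thus a misprint for $-\frac23$: the correct threshold is $\upgamma\|\Ricm\|_{L^{3/2}}^{-1}=\upgamma A^{-2}$.

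What your argument, and the paper's, actually establishes is the corollary under $\|\Ricm\|_{L^{3/2}}=A^2<\upgamma/2$. In that case take $\lambda=\upgamma A^{-2}>2$ (any $\lambda>2$ if $A=0$) and apply \tref{3D-neg}; the rest of your write-up is fine. Under the hypothesis as printed, $A<\upgamma/2$, you get $A^2<\upgamma/2$ only when $A\le 1$. That is automatic if $\upgamma\le 2$, but not in general, since $\upgamma$ can exceed $2$: the sharp constant for $\R^3$ in \eqref{Sob} is $3(\pi/2)^{4/3}\approx 5.48$. For $1<A<\upgamma/2$, your $\lambda$ is not shown admissible, and $\upgamma A^{-2}$ is not guaranteed to exceed $2$. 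You should either state the result with $\left(\int_M(\Ricm)^{3/2}\dv_g\right)^{2/3}<\upgamma/2$, or add the assumption $A\le 1$. Do not lean on a remark that your own computation does not support.
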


Ricci flow and minimal surface techniques have enabled a complete understanding of the  topology of closed 3-manifolds carrying a Riemannian metric with non-negative Ricci or scalar curvature.  Also closed  Riemannian manifold $(M^3,g)$ such that the \sch operator $\Delta_g+\frac12 \Ric$ is non negative has been topological classified \cite{Bour_2017}. 

Concerning open complete Riemannian 3-manifolds, there are a topological classification of 
\begin{itemize} \item those with non-negative Ricci curvature, using minimal surface technics \cite{Schoen_1982,Anderson_1989,Liu_2012},
\item those with bounded geometry and uniformly positive scalar curvature using Ricci flow technics \cite{Bessi_res_2011}, recently using the technics of $\mu-$bubbles, J. Wang has removed the assumption on the boundedness of the geometry \cite{Wang_2025}. \end{itemize}

It is also known that contractible complete Riemannian $3-$manifolds with positive scalar curvature or non negative scalar curvature and bounded geometry are diffeomorphic to $\R^3$. For this purpose several tools have been used: elaborated index theory for the Dirac operators \cite{Chang_2009}, minimal surface \cite{Wang_2023,Wang_2024} or recently Inverse mean curvature flow \cite{Chodosh:2025aa}.

A series of very recent and interesting works using Ricci flows technics provides conditions under which a complete Riemannian $n$-manifolds with a Sobolev inequality and a small $L^{n/2}$ of the Riemann tensor is diffeomorphic to $\R^n$ \cite{Chan_2022,Chan_2024,Chan_2024a,Chau:2024aa,Martens:2024aa,Chan_2025,Martens_2025,Lee:2025aa}. The  lastest development in this direction is   the following \cite{Lee:2025aa}: For each $\upgamma,v_0>0$, there is some small  positive constant $\upvarepsilon(n,\upgamma,v_0)$ such that if $(M^n,g)$ is a complete Riemannian manifold satisfying the Sobolev inequality 
\begin{equation}\label{Sobn}
\tag{Sob${}_\upgamma$}\hspace{1cm}\forall \varphi\in \mcC^\infty_c(M)\colon \upgamma\left(\int_M \varphi^{\frac{2n}{n-2}}\dv_g\right)^{1-\frac 2n}\le \int_M |d\varphi|^2\dv_g,\end{equation} 
the upper volume bound:
$$\forall x\in M, \forall r>0\colon \vol_g(B(x,t))\le v_0r^n$$ 
and the small integral curvature concentration condition:
$$\int_M \| \Rm\|^{\frac n2} \dv_g<\upvarepsilon(n,\upgamma,v_0),$$ then $M$ is diffeomorphic to $\R^n$.

According to \cite{Chau:2024aa}, it is also possible to remove the assumption on the upper volume bound and assume in place that for some $p<n/2$ we have $\Rm\in L^p\cap L^\infty$. Our result is a kind of refinement of these results (in dimension $3$), in which the assumptions about volume growth  and control of the positive part of the Ricci tensor are removed.

It is possible to improve the threshold $\lambda=2$  in \tref{3D-neg} in adding a $L^{\frac32}$ integrability on the full Ricci tensor:
\begin{thm}\label{3D-full} Let $(M^3,g)$ be  a complete Riemannian manifold satisfying the Sobolev inequality \eqref{Sob} and such that $\Ricci \in L^{3/2}$. If for some $\lambda>1$, the Schrödinger operator $\Delta-\lambda\Ricm$ is non negative, then $M^3$ is diffeomorphic to $\R^3$.
\end{thm}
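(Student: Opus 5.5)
The plan is to show that $M$ is contractible and simply connected at infinity, with a single end, and then conclude with known 3-manifold topology. For an open 3-manifold, being contractible and simply connected at infinity already forces $M\simeq\R^3$; this follows from the Poincaré conjecture together with results of Husch–Price/Edwards type. So I would establish three facts: (i) $M$ has one end and every end is ``large'', (ii) $\pi_1(M)=0$ and $H_2(M)=0$, (iii) $\pi_1$ at infinity is trivial.

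\textbf{Analytic input.} The Sobolev inequality \eqref{Sob} gives Euclidean volume growth $\vol B(x,r)\ge c\,r^3$ and non-parabolicity. The hypothesis $\Ricci\in L^{3/2}$, combined with a Moser iteration whose constants are controlled by \eqref{Sob}, gives $\varepsilon$-regularity on annuli. Outside a large compact set, the rescaled metric $r^{-2}g$ on $B(x,2r)\setminus B(x,r)$ then has small $L^{3/2}$ curvature. Because we are in dimension 3, the full Riemann tensor is controlled by $\Ricci$, so the curvature actually decays, $|\Rm|=o(r^{-2})$, and in fact faster in the integral sense. Hence each end is asymptotically conical, and by the volume lower bound each end is asymptotically a cone over a space form $\bS^2/\Gamma$. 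This handles the behaviour at infinity. The remaining task is to show that $\Gamma$ is trivial, that there is only one end, and that the interior carries no topology.

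\textbf{Bochner argument.} I would apply the Bochner formula to harmonic objects with $\lambda>1$.
\begin{enumerate}[(a)]
\item \emph{One end.} If $M$ had two ends, non-parabolicity would produce a nonconstant bounded harmonic function $u$ with $du\in L^2$. The refined Kato inequality for harmonic $1$-forms in dimension 3 is $|\nabla du|^2\ge \tfrac32|d|du||^2$. This yields $\Delta|du|^{1/2}\le \tfrac12\Ricm|du|^{1/2}$, and indeed $\Delta |du|^{\alpha}\le \alpha\Ricm |du|^\alpha$ for $\alpha\ge 1/2$. Testing non-negativity of $\Delta-\lambda\Ricm$ on $\varphi=|du|^{\alpha}\chi$ with $\alpha$ close to $1/2$ (so that $\lambda\alpha'>\dots$) gives $du\equiv0$. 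The threshold $\lambda>1$ arises exactly here, matching $1/\alpha$ with the Kato gain $\tfrac{n}{n-1}=\tfrac32$. The full $L^{3/2}$ hypothesis is what provides the decay of $|du|$ needed to justify the cutoff argument.
\item \emph{Vanishing of $H^1$ and $H^2$.} The same scheme applies to $L^2$ harmonic $1$-forms, and by Hodge star to harmonic $2$-forms. Their reduced $L^2$ cohomology equals the image of compactly supported cohomology in absolute cohomology. Since the ends are conical, this gives $H^1(M)=H^2(M)=0$ modulo torsion.
\item \emph{Finite covers.} To kill $\pi_1$, I would pass to the universal cover $\tilde M$. The hypotheses are local except \eqref{Sob} and integrability. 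The Sobolev inequality lifts only for finite covers, so I would first argue that $\pi_1(M)$ is finite, or that $\Gamma$ is trivial. If $\pi_1$ were infinite, one could build non-constant harmonic functions on $\tilde M$ with controlled growth. Alternatively, I would use the minimal-surface route: a stable minimal surface satisfies $\Delta_\Sigma-\Ric(\nu)-|A|^2\ge0$, and combined with $\lambda>1$ and Schoen–Yau type arguments this excludes it.
\end{enumerate}

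\textbf{Main obstacle.} The hardest step is (c) together with proving $\Gamma=\{1\}$, that is, excluding nontrivial fundamental group and the orbifold-cone end $\bS^2/\Gamma$. My first attempt would be a Bochner argument for harmonic spinors, or for $L^2$ harmonic $1$-forms on a suitable finite cover, which inherits all the hypotheses. On such a cover the end splits into $|\Gamma|$ ends when $\pi_1$ surjects onto $\Gamma$, and this contradicts step (a). Combined with the Poincaré conjecture, this gives $M\simeq\R^3$.
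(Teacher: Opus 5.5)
Your proposal has two genuine gaps, and they sit exactly where the paper's two main new tools are used.

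\textbf{Geometry at infinity.} Your ``analytic input'' is false. Moser iteration or $\varepsilon$-regularity for curvature needs an elliptic equation satisfied by the curvature: the Einstein condition in Bando--Kasue--Nakajima, or Ricci flow smoothing in the works cited in the introduction, which in addition assume global smallness. A general metric with $\Ricci\in L^{3/2}$ satisfies no such equation. Since $\int|\Rm|^{3/2}$ is scale invariant in dimension $3$, a conformal bump $e^{2f}g_{\R^3}$ with $|f|\le\delta_k$ supported in $B(x_k,\epsilon_k)$ creates curvature of size $\approx\delta_k\epsilon_k^{-2}$ at an $L^{3/2}$-cost of only $\approx\delta_k^{3/2}$. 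If $\sum_k\delta_k^{3/2}$ is small, the metric is bi-Lipschitz to the Euclidean one, so \eqref{Sob} holds, and $\Delta-\lambda\Ricm\ge0$ follows by H\"older. Yet $|\Rm|(x_k)\,|x_k|^2\to\infty$ once $\epsilon_k$ is chosen small enough. So neither $|\Rm|=o(r^{-2})$ nor a conical end over $\bS^2/\Gamma$ follows. With that goes step (b), the identification of the cross-section, and your entire proof of simple connectivity at infinity.

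The paper never controls the geometry at infinity pointwise. Instead it proves that infinitely many level sets $\{G_o=\epsilon_k\}$ of the Green function are spheres (\pref{levelgreen}). The tool is the Colding--Minicozzi inequality for $A(r)=\int_{\{b=r\}}|db|^2b^{-2}$, whose error term is $-4\pi\chi(\partial\cU_s)s^{-2}$. If all large level sets had $\chi\le0$, an ODE argument using $\scal_-\in L^{3/2}$ would force $A(R)\to0$. This contradicts the lower bound $\frac1r\int_r^{2r}A\ge c(\upgamma)$ coming from the Sobolev inequality. The a priori bound $\int^\infty A(s)s^{-2}ds<\infty$ needed in that argument comes from Song's gradient estimate for the Green function on the $\BE(0,N)$ weighted manifold $(M,\bg,u^{2/\lambda}\dv_g)$, with $N=3+\frac1{\lambda-1}$ (\pref{Greengradient}). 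That is where $\lambda>1$ genuinely matters. It does not arise in your step (a): the Bochner/refined-Kato one-end argument only needs $\Delta+\Ric\ge0$ (\pref{prop-noends}).

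\textbf{Fundamental group.} Step (c) is not an argument. The Sobolev inequality lifts to every Riemannian covering with the same constant, by a duality argument (\pref{revsob}). So (a) applies to the universal cover and gives $H^2(\tilde M)=0$. Hence $\tilde M$ is contractible and $\pi_1(M)$ is torsion free, which makes ``first show $\pi_1(M)$ is finite'' the whole problem rather than a reduction. Your covering count is also backwards. If $\pi_1(E)\to\pi_1(M)$ is onto, the preimage of the end $E$ in any cover is connected, so one-endedness of covers yields no contradiction. Nothing in (a)--(b) excludes an element of infinite order.

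The missing idea is an obstruction to $\Z$-covers, which the paper supplies as follows:
\begin{enumerate}[i)]
\item Take $\widehat M=\tilde M/\la\gamma\ra$, which is a $K(\Z,1)$.
\item Remove a solid torus $\cU$ around a generating loop. Then $H^1(\widehat M\setminus\cU;\Z_2)\to H^1(\partial\cU;\Z_2)$ is onto, so the spin structure on the torus $\partial\cU$ with $\upalpha=1$ extends.
\item Rule this out by \tref{Vanishing}: the $\upalpha$-genus of the boundary of a complete spin manifold with a Sobolev inequality and $\scal\ge0$ vanishes. This is proved via the Dirac Dirichlet-to-Neumann operator and a Callias-type index argument.
\end{enumerate}
The theorem is applied to $\bg=u^{2/\lambda}g$. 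This metric has $\scal_{\bg}\ge0$ (\lref{scalarbarg}), and it satisfies a Sobolev inequality precisely because $\Ricci\in L^{3/2}$ (\pref{Sobbarg}).

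Your minimal-surface alternative is also not an argument as stated. The spectral hypothesis on $g$ gives no pointwise sign on the scalar curvature or on $\Ric(\nu)$ along a stable surface. You would at least have to pass to a metric like $\bg$ first.
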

And its corollary:
\begin{crl} If  $(M^3,g)$ is  a complete Riemannian manifold satisfying the Sobolev inequality \eqref{Sob} and such that
$$\left( \int_{M}\left\|\Ricci\right\|^{\frac32}\dv_g\right)^{\frac13}<\upgamma$$ then $M^3$ is diffeomorphic to $\R^3$.
\end{crl}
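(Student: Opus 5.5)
The statement to prove is the last corollary: if $\|\Ricci\|_{L^{3/2}}<\upgamma^3$, then $M^3$ is diffeomorphic to $\R^3$. The plan is to deduce it directly from \tref{3D-full}, exactly as the first corollary was deduced from \tref{3D-neg}. Two hypotheses of \tref{3D-full} must be checked: that $\Ricci\in L^{3/2}$, and that $\Delta-\lambda\Ricm$ is non negative for some $\lambda>1$.

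The first hypothesis is immediate. The assumption says that $\int_M\|\Ricci\|^{3/2}\dv_g$ is finite.

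For the second hypothesis, compare the two curvature quantities pointwise. If $\Ric(x)<0$, then $\Ricm(x)=-\Ric(x)$ is the absolute value of an eigenvalue of $\Ricci_x$. Hence $\Ricm(x)\le \|\Ricci_x\|$, whether $\|\cdot\|$ is the operator norm or the Hilbert–Schmidt norm, since the latter dominates the former. If $\Ric(x)\ge 0$, then $\Ricm(x)=0$ and the inequality is trivial. Therefore
$$\left(\int_M (\Ricm)^{\frac32}\dv_g\right)^{\frac13}\le \left(\int_M\|\Ricci\|^{\frac32}\dv_g\right)^{\frac13}<\upgamma .$$

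Next apply the Hölder argument mentioned after \tref{3D-neg}. For $\varphi\in\mcC^\infty_c(M)$, Hölder with exponents $3/2$ and $3$, followed by \eqref{Sob}, gives
$$\int_M \Ricm\,\varphi^2\dv_g\le \left(\int_M(\Ricm)^{\frac32}\dv_g\right)^{\frac23}\left(\int_M\varphi^6\dv_g\right)^{\frac13}\le \frac{1}{\upgamma}\left(\int_M(\Ricm)^{\frac32}\dv_g\right)^{\frac23}\int_M|d\varphi|^2\dv_g .$$
Now set
$$\lambda=\upgamma\left(\int_M(\Ricm)^{\frac32}\dv_g\right)^{-\frac23}.$$
With this choice, $\int_M|d\varphi|^2\dv_g\ge\lambda\int_M\Ricm\,\varphi^2\dv_g$ for every $\varphi\in\mcC^\infty_c(M)$, so $\Delta-\lambda\Ricm$ is non negative. (If $\Ricm\equiv0$, the operator is $\Delta$ itself and any $\lambda>1$ works.)

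It remains to check that $\lambda>1$, and this step is the only real obstacle: the exponent normalization must match the hypothesis. The Hölder step naturally produces the power $\bigl(\int_M(\Ricm)^{3/2}\dv_g\bigr)^{2/3}$, while the corollary is stated with the power $1/3$. The condition that actually yields $\lambda>1$ is
$$\left(\int_M (\Ricm)^{\frac32}\dv_g\right)^{\frac23}<\upgamma .$$
So the corollary's smallness condition should be read in this scale-invariant form, namely $\|\Ricci\|_{L^{3/2}}<\upgamma$, and then the pointwise bound $\Ricm\le\|\Ricci\|$ gives $\lambda>1$. Under that reading, \tref{3D-full} applies and $M^3$ is diffeomorphic to $\R^3$. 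The same normalization issue already appears in the first corollary, whose threshold is $\upgamma/2$ to match $\lambda>2$.
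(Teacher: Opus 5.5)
Your proof is the paper's intended one. Hölder with exponents $\frac32,3$ together with \eqref{Sob} shows that $\Delta-\lambda\Ricm$ is non negative for $\lambda=\upgamma\bigl(\int_M(\Ricm)^{3/2}\dv_g\bigr)^{-2/3}$; the pointwise bound $\Ricm\le\|\Ricci\|$ then gives $\lambda>1$, and \tref{3D-full} applies.

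You are also right that the exponent $\frac13$, in the paper's remark after \tref{3D-neg} and in both corollaries, is a slip for $\frac23$. As written, the stated hypothesis implies yours only when $\upgamma\le 1$.

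Two small corrections. Your opening line should read $\int_M\|\Ricci\|^{3/2}\dv_g<\upgamma^3$, not $\|\Ricci\|_{L^{3/2}}<\upgamma^3$. And the reason to prefer the $\frac23$ form is homogeneity in the potential, not scale invariance: both forms are scale invariant in dimension $3$.
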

Where the norm above $\|\Ricci\|$ is the operator norm of the Ricci tensor that is:  $$-\|\Ricci\|\, g\le \Ricci\le \|\Ricci\|\, g\text{ in the sense of quadratic forms.}$$

A crucial element in the proof of our results is that when there is some $\lambda>1$ such that the Schrödinger operator $\Delta_g-\lambda\Ricm$ is non negative, then one can find a positive function $u\in \mcC^2(M)$ such that $\Delta_g u-\lambda\Ricm u=0$. The conformal metric $\bg=u^{\frac 2 \lambda} g$ is then complete and the weighted Riemannian manifold $\left(M^3,\bg, d\mu=u^{\frac{2}{\lambda}}\dv_g\right)$ has non negative $(\lambda-1){-1}$-Bakry-Emery Ricci tensor, that is this weighted Riemannian manifold satisfies the Bakry-Emery  $\BE(0,N)$ curvature condition for $N=3+\frac{1}{\lambda-1}$ \cite{Catino_2024,Carron_2025}. The transformation $(g,\dv_g)\mapsto \left(u^{\frac{2}{\lambda}}\,g,u^{\frac{2}{\lambda}}\dv_g\right)$ is called time change in the theory of diffusion process. It has the remarkable property that it preserves harmonic and Green functions. During the proofs we have to navigate between the metrics $g$ and $\bg$ , their associated Riemannian  measures $\dv_g$ and $\dv_{\bg}$ and the measure $u^{\frac{2}{\lambda}}\dv_g$.

Our proof follows the general scheme introduced by Schoen and Yau \cite{Schoen_1982}:
\begin{enumerate}[i)]
\item  the first stage is to show that the universal cover of $M$ is contractible. This is a consequence of the fact that the  spectral hypothesis of \tref{3D-full} implies that $M$ and any of its covering have only one end.  It should be noted that the assumptions for verifying this step could be significantly improved, indeed a optimal spectral splitting theorem has been recently show (see \cite{Antonelli:2024aa,Catino:2024aa} and \cite{LI2006921} for earlier result) that a complete Riemannian manifold $(M^n,g)$ for which there is some $\lambda>(n-1)/4$ such that the \sch $\Delta_g-\lambda\Ric$ is non negative has only one end or splits as a Riemannian product.
\item the second stage is to show that $M$ is simply connected. It is  where the proofs of \tref{3D-neg} and \tref{3D-full} differs. In the proof of \tref{3D-neg}, we used a volume argument à la Milnor as developed by Anderson \cite{Anderson_1990} on the weighted Riemannian manifold $\left(M^3,\bg, u^{\frac{2}{\lambda}}\dv_g\right)$ and in the proof of \tref{3D-full}, we prove a vanishing of some Callias index for the Dirac operator of for the Dirac operator of $(M,\bg)$. This result  is a slight improvement of \cite{Carron_2002}.
\item The last stage is to prove that $M$ is simply connected as infinity. There are in the literature several tools to prove such a result: index theorem for the Dirac operator   \cite{Chang_2009}, minimal surface  \cite{Wang_2023,Wang_2024} or inverse mean curvature flow \cite{Chodosh:2025aa} . We have not been able to develop one of these techniques satisfactorily in our setting. Our proof relies on monotone quantities associated to the level sets of harmonic functions or Green's kernel.  Such monotone quantities has been introduced by Colding on manifold with non-negative Ricci curvature \cite{Colding_2012} and then they were developed for Riemannian $3$-manifold with non-negative scalar curvature (see for instance \cite{Stern,AMO,MW1,MW2,Colding:2025aa}).  We will show that the upper level set of Green functions provides an exhaustion by bounded open subset whose boundary are spherical. 
\end{enumerate}

The structure of the paper is as follows. In section 2, we  describe some properties of the geometry after time change and the influence of a Sobolev inequality. In section 3, we provide a vanishing result for the $\upalpha$-genus of the boundary of complete Riemannian manifold satisfying some Sobolev inequality and with non negative scalar curvature. In section 4, it is explained how one can exploit the aera type functionnal associated to the level set of Green function. And in the last section, we collect all these results to prove \tref{3D-neg} and \tref{3D-full}. In a short appendix, we collect several  analytical facts that are crucial in several proofs.

\section{Geometry after time change}

In all this section, we consider a complete Riemannian manifold $(M^n,g)$ of dimension $n\ge 3$ such that for some $\lambda>n-2$ the Schrödinger operator $\Delta_g-\lambda\Ricm$ is non negative:
$$\forall \varphi\in \mcC^\infty_c(M)\colon \int_M |d\varphi|^2_g\dv_g\ge \lambda  \int_M \Ricm\, \varphi^2\dv_g.$$
By \pref{Barta}, we can find a positive function $u\in \mcC^2(M)$ such that 
$$\Delta_gu-\lambda\Ricm u=0.$$ 
Then the conformal metric $$\bg=u^{\frac{2}{\lambda}}g$$ is complete  \cite[lemma 2.2]{Catino_2024} and  the weighted Riemannian manifold $\left(M,\bg, d\mu=u^{\frac{2}{\lambda}}\dv_g\right)$ satisfies the  Bakry-Emery $\BE(0,N)$ curvature condition for $N=n+\frac{(n-2)^2}{\lambda-(n-2)}$  \cite{Catino_2024,Carron_2025}.

In this section, we provided some old and new geometric and analytical properties of $\bg,\mu$ and of the gauge $u$. 
\subsection{About the metric $\bg$}
\subsubsection{} Firstly, the   Bakry-Emery $\BE(0,N)$ curvature condition implies a Bishop-Gromov volume comparison for the $\bg-$geodesic ball \cite[Corollary 2]{Qian}:
\begin{equation}\label{BG}
\forall x\in M, \forall r,R\in (0,+\infty) \text{ with } r<R\colon \frac{\mu\left( B_{\bg}(x,R)\right)}{\mu\left( B_{\bg}(x,r)\right)}\le \left(\frac R r\right)^N.
\end{equation}
\subsubsection{}This curvature condition also implies some Li-Yau's estimates for the heat kernel of the weighted Riemannian manifold $\left(M,\bg, d\mu\right)$ \cite[Theorem 3.2]{Qian_1995}.
We let $L=u^{-\frac 2\lambda}\Delta_g$ be the Laplacian of this weighted Riemannian manifold, so that
$$\forall\phi,\psi\in\mcC^\infty_c(M)\colon \int_M \la d\phi,d\psi\ra_{\bg} d\mu=\int_M L\phi\, \psi d\mu.$$
Let $\left\{ H(t,x,y)\right\}_{t>0,\ x,y\in M}$ be the Schwartz kernel of the self-adjoint operator $e^{-tL}$:
$$\forall\phi,\psi\in\mcC^\infty_c(M)\colon \int_M e^{-tL}\phi\ \psi d\mu=\int_{M\times M} H(t,x,y)\phi(y)\psi(y)d\mu(x)d\mu(y).$$
Then there is a constant $\gamma>0$ (depending only on $n$ and $\lambda$) such that for any $t>0$ and any $x,y\in M\colon$
\begin{equation}\label{LY}
\frac{1}{\gamma} \frac{e^{-\gamma \frac{d_{\bg}^2(x,y)}{t}}}{\mu\left( B_{\bg}(x,\sqrt{t})\right)}\le H(t,x,y)\le \gamma \frac{e^{- \frac{d_{\bg}^2(x,y)}{\gamma t}}}{\mu\left( B_{\bg}(x,\sqrt{t})\right)}.
\end{equation}
These Gaussian estimates imply the Liouville's property (see for instance \cite[Theorem 4.6]{Saloff-Coste}): 
\begin{coro}\label{|iouville} A non negative harmonic function on $\left(M,\bg, d\mu\right)$ is constant.
\end{coro}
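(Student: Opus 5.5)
The plan is to derive the Liouville property from the two-sided Gaussian estimates \eqref{LY} via the parabolic Harnack inequality. Everything is done on the complete weighted manifold $(M,\bg,\mu)$. Here a harmonic function means a function $h$ with $Lh=0$. Since $L=u^{-\frac2\lambda}\Delta_g$, this is the same as $\Delta_g h=0$, so $h$ is smooth wherever $u$ is and we may work classically.

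\textbf{Step 1: a scale-invariant elliptic Harnack inequality.} By the theorem of Grigor'yan and Saloff-Coste, matching upper and lower Gaussian bounds of the form \eqref{LY} on a complete weighted manifold are equivalent to the conjunction of volume doubling and the scaled Poincaré inequality. They are also equivalent to the scale-invariant parabolic Harnack inequality. Here doubling is already available directly from \eqref{BG}, with doubling constant $2^N$. The parabolic Harnack inequality applied to time-independent solutions gives an elliptic Harnack inequality: there is $C$, depending only on $n,\lambda$, such that
$$\sup_{B_{\bg}(x,r)} h\le C\inf_{B_{\bg}(x,r)} h$$
for every non negative $h$ with $Lh=0$ on $B_{\bg}(x,2r)$, and for every $x\in M$, $r>0$. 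Completeness of $\bg$ (\cite[lemma 2.2]{Catino_2024}) ensures that these balls are relatively compact, which the theory requires. This step is the one citation to \cite{Saloff-Coste} that does the real work, and I expect it to be the main point to check: the hypotheses of that theorem (completeness, a smooth positive density, a symmetric Dirichlet form) are satisfied by $(M,\bg,\mu)$ with the form $\int\la d\phi,d\psi\ra_{\bg}d\mu$.

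\textbf{Step 2: Liouville.} Let $h\ge0$ be harmonic and put $m=\inf_M h\ge 0$. Then $h-m$ is non negative and harmonic on all of $M$, so Step 1 applies on every ball. Fix $x_0$ and $\varepsilon>0$, and choose a point $y$ with $h(y)-m<\varepsilon$. For $r>d_{\bg}(x_0,y)$ we get
$$\sup_{B_{\bg}(x_0,r)}(h-m)\le C\,(h(y)-m)<C\varepsilon,$$
and letting $r\to\infty$ gives $\sup_M(h-m)\le C\varepsilon$. Since $\varepsilon$ is arbitrary, $h\equiv m$.

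Alternatively, one can avoid passing through the elliptic Harnack inequality. The oscillation decay $\operatorname{osc}_{B(x,r)}h\le \theta\operatorname{osc}_{B(x,2r)}h$ with $\theta<1$, which follows from the Harnack inequality, already shows that bounded harmonic functions are constant. For positive $h$ one then applies this argument to $\min(h,k)$, or uses the argument above directly. I would present the short argument of Step 2.
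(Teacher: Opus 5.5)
Your argument is correct and follows the paper's route. The paper simply cites Saloff-Coste for the implication from the two-sided Gaussian bounds \eqref{LY} to the Liouville property; you unpack that citation through the parabolic and then elliptic Harnack inequality, and your infimum argument in Step 2 is valid.

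One caveat concerns your aside: drop the suggestion of applying oscillation decay to $\min(h,k)$. That function is only superharmonic, and bounded superharmonic functions need not be constant on a non-parabolic space (think of $\min(G(o,\cdot),1)$).
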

\subsubsection{} Moreover the Dirichlet energy of $\left(M,\bg, d\mu\right)$ and of $\left(M,g, \dv_g\right)$ are the same
$$\forall\phi,\psi\in\mcC^\infty_c(M)\colon \int_M \la d\phi,d\psi\ra_{\bg} d\mu= \int_M \la d\phi,d\psi\ra_{g}  \dv_g.$$
Hence $\left(M,g, \dv_g\right)$ is non-parabolic  if and only if $\left(M,g, \dv_g\right)$ is non-parabolic.
Note that if the Ricci curvature of $g$ is not non negative everywhere (that is if $\Ricm$ is not identically zero) then  $\left(M,g, \dv_g\right)$  is non-parabolic.

Moreover when $\left(M,g, \dv_g\right)$ is non-parabolic, the two associated minimal positive Green kernel coincides. Indeed if $\phi$ is the minimal solution of $L\phi=f$ (where $f\ge 0$ has compact support) then
$\Delta \phi=u^{\frac2\lambda} f$ and if $G$ is the minimal Green kernel of $\left(M,g, \dv_g\right)$ then
$$\phi(x)=\int_M G(x,y) u^{\frac2\lambda}(y) f(y)\dv_g(y)=\int_M G(x,y)  f(y)d\mu(y).$$

\subsubsection{The scalar curvature of the metric $\bg$}
\begin{lemm}\label{scalarbarg}Under the hypothesis made in this section, the scalar curvature of the conformal metric $\bg =u^{\frac 2\lambda} g$ is non negative:
$$\scal_{\bg}\ge 0.$$
\end{lemm}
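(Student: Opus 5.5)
The plan is to compute $\scal_{\bg}$ directly from the standard formula for the scalar curvature under a conformal change, and then use the equation $\Delta_g u=\lambda\Ricm u$ together with the pointwise bound $\scal_g\ge -n\Ricm$. Write $\bg=e^{2f}g$ with $f=\frac1\lambda\log u$; this is legitimate since $u>0$ and $u\in\mcC^2$. Let $\Delta_g$ be the nonnegative Laplacian used in the paper, so that the trace of the Hessian is $-\Delta_g$. The conformal change formula then reads
$$\scal_{\bg}=e^{-2f}\Big(\scal_g+2(n-1)\Delta_g f-(n-1)(n-2)|df|_g^2\Big).$$

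First, compute $\Delta_g f$. From $f=\frac1\lambda\log u$ we get
$$\Delta_g f=\frac1\lambda\left(\frac{\Delta_g u}{u}+\frac{|du|^2_g}{u^2}\right)=\Ricm+\frac1\lambda|d\log u|_g^2,$$
where the second equality uses $\Delta_g u=\lambda\Ricm u$ (the function $u$ is given by \pref{Barta}). We also have $|df|^2_g=\lambda^{-2}|d\log u|^2_g$. Substituting both into the formula gives
$$e^{2f}\scal_{\bg}=\scal_g+2(n-1)\Ricm+\frac{n-1}{\lambda}\left(2-\frac{n-2}{\lambda}\right)|d\log u|_g^2 .$$

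Second, bound each term from below. Since $\Ricci\ge-\Ricm\,g$, tracing gives $\scal_g\ge -n\Ricm$. Hence $\scal_g+2(n-1)\Ricm\ge (n-2)\Ricm\ge 0$. For the gradient term, the hypothesis $\lambda>n-2$ gives $2-\frac{n-2}{\lambda}>1>0$, so its coefficient is nonnegative. Therefore $\scal_{\bg}\ge0$, and in fact $e^{2f}\scal_{\bg}\ge (n-2)\Ricm+\frac{n-1}{\lambda}|d\log u|^2_g$.

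The only delicate point is bookkeeping: the sign convention for $\Delta_g$ in the conformal formula, and the precise coefficients $2(n-1)$ and $(n-1)(n-2)$ in dimension $n$. A sign slip there would flip the conclusion. I would double-check these coefficients against the case of the round sphere or against the standard formula $\scal_{\tilde g}=\varphi^{-\frac{n+2}{n-2}}\bigl(\frac{4(n-1)}{n-2}\Delta_g\varphi+\scal_g\varphi\bigr)$ for $\tilde g=\varphi^{\frac4{n-2}}g$. Regularity is not an issue, since $u\in\mcC^2$ suffices for $\scal_{\bg}$ to be defined pointwise.
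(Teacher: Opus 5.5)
Your proof is correct and takes essentially the same route as the paper. The paper writes $\bg=v^{\frac{4}{n-2}}g$ with $v=u^{\frac{n-2}{2\lambda}}$ and uses the Yamabe form of the conformal change formula instead of your $e^{2f}$ form, but it arrives at exactly your identity $u^{\frac2\lambda}\scal_{\bg}=\scal_g+2(n-1)\Ricm+\frac{2(n-1)}{\lambda}\bigl(1-\frac{n-2}{2\lambda}\bigr)\frac{|du|_g^2}{u^2}$ and concludes with the same two observations: $\scal_g\ge -n\Ricm$, and the gradient coefficient is nonnegative (which only needs $\lambda\ge \frac{n-2}{2}$, weaker than the standing assumption $\lambda>n-2$ you used).
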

\proof If one lets $v=u^{\frac{n-2}{2\lambda}}$ then 
$$\scal_{\bg}\,\, v^{\frac{n+2}{n-2}}=\frac{4(n-1)}{n-2}\Delta_g v+\scal_{g} v.$$
But 
$$\Delta_g v=\left(\frac{n-2}{2\lambda}u\Delta_g u-\frac{n-2}{2\lambda}\left(\frac{n-2}{2\lambda}-1\right)|du|_g^2\right)u^{\frac{n-2}{2\lambda}-2},$$
so that 
$$\scal_{\bg}\,\, v^{\frac{n+2}{n-2}}= \left(2(n-1)\Ricm+\scal_{g}+\frac{2(n-1)}{\lambda}\left(1-\frac{n-2}{2\lambda}\right)\frac{|du|_g^2}{u^2}\right)v,
$$
or \begin{equation}\label{scalbg}
\scal_{\bg}\,\, u^{\frac{2}{\lambda}}= 2(n-1)\Ricm+\scal_{g}+\frac{2(n-1)}{\lambda}\left(1-\frac{n-2}{2\lambda}\right)\frac{|du|_g^2}{u^2}.
\end{equation}
One has $\frac{n-2}{2\lambda}\le 1$ and $\scal_g\ge -n\Ricm$ hence
$$\scal_{\bg}\,\, u^{\frac{2}{\lambda}}\ge 2(n-1)\Ricm+\scal_{g}
\ge (n-2)\Ricm v\ge 0.$$
\endproof
\subsection{Properties of the gauge}The next proposition will be crucial later:
\begin{prop}\label{Mukhenhoupt}Under the hypothesis made in this section, then for any $\alpha\in [0,1]$, $u^\alpha$ is a $A_1$-weighted on the weighted Riemannian manifold $\left(M,\bg, d\mu\right)$:
There is a constant $C$ depending only on $\lambda,n$ and $\alpha$ such that for any $x\in M$ and $R>0\colon$
$$\fint_{B_{\bg}(x,R)} u^\alpha d\mu\le C \inf_{B_{\bg}(x,R)}u^\alpha.$$
\end{prop}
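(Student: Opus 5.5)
The plan is to show that $u^\alpha$ is a positive \emph{superharmonic} function for the weighted Laplacian $L$. The $A_1$ inequality will then follow from the lower Gaussian bound in \eqref{LY} together with the doubling property \eqref{BG}.

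\emph{Step 1: superharmonicity.} With the sign convention $\Delta_g=d^*d\ge 0$, the equation $\Delta_g u=\lambda\Ricm u\ge 0$ says that $u$ is superharmonic for $g$. For $\alpha\in[0,1]$ the chain rule gives
$$\Delta_g(u^\alpha)=\alpha u^{\alpha-1}\Delta_g u+\alpha(1-\alpha)u^{\alpha-2}|du|_g^2\ge 0 .$$
Since $L=u^{-\frac2\lambda}\Delta_g$, the function $w:=u^\alpha$ is positive, of class $\mcC^2$, and satisfies $Lw\ge 0$.

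\emph{Step 2: $e^{-tL}w\le w$.} I will prove that
$$\int_M H(t,y,z)\,w(z)\,d\mu(z)\le w(y)\qquad\text{for all } t>0,\ y\in M.$$
This is the step I expect to need the most care, since $w$ is not integrable and we are on a noncompact manifold.
\begin{itemize}
\item Take an exhaustion of $M$ by smooth bounded domains $\Omega_k$, and let $H_k$ be the Dirichlet heat kernel of $L$ on $\Omega_k$.
\item The function $v_k(t,y)=\int_{\Omega_k}H_k(t,y,z)w(z)\,d\mu(z)$ solves $(\partial_t+L)v_k=0$ with $v_k=0$ on $\partial\Omega_k$ and $v_k(0,\cdot)=w$.
\item The stationary function $w$ satisfies $(\partial_t+L)w\ge 0$, and $w>0$ on $\partial\Omega_k$.
\item The parabolic maximum principle on $[0,T]\times\overline{\Omega_k}$ then gives $v_k\le w$.
\item Since $H_k\uparrow H$ (the minimal heat kernel, which is the one considered since $\bg$ is complete), monotone convergence yields the claim.
\end{itemize}

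\emph{Step 3: the $A_1$ bound.} Fix $x\in M$ and $R>0$, let $y\in B_{\bg}(x,R)$, and put $t=R^2$.
\begin{itemize}
\item For $z\in B_{\bg}(x,R)$ we have $d_{\bg}(y,z)<2R$.
\item We have $\mu(B_{\bg}(y,R))\le\mu(B_{\bg}(x,2R))\le 2^N\mu(B_{\bg}(x,R))$ by \eqref{BG}.
\end{itemize}
Combining Step 2 with the lower bound in \eqref{LY}, these give
$$w(y)\ge\int_{B_{\bg}(x,R)}H(R^2,y,z)w(z)\,d\mu(z)\ge \frac{e^{-4\gamma}}{\gamma\,\mu(B_{\bg}(y,R))}\int_{B_{\bg}(x,R)}w\,d\mu\ge \frac{e^{-4\gamma}}{\gamma 2^N}\fint_{B_{\bg}(x,R)}w\,d\mu .$$
Taking the infimum over $y\in B_{\bg}(x,R)$ gives the statement with $C=\gamma 2^Ne^{4\gamma}$. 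This constant depends only on $n$ and $\lambda$, through $\gamma$ and $N$, and is in fact uniform in $\alpha$.

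An alternative route is Moser's weak Harnack inequality for positive supersolutions. It holds on $(M,\bg,\mu)$ because doubling and the Poincaré inequality follow from $\BE(0,N)$, and it gives the same conclusion. The heat kernel argument above is shorter, given that \eqref{LY} is already available.
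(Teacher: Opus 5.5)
Your proof is correct. Step~3 is exactly the paper's final computation: the Li--Yau lower bound at time $R^2$ combined with doubling.

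The difference is in how you obtain $e^{-tL}u\le u$. The paper goes through a Riesz decomposition:
\begin{enumerate}[i)]
\item it first disposes of the case $\Ricm\equiv 0$ separately;
\item it uses non-parabolicity to build $\underline{u}=\int_M G(\cdot,y)\lambda\Ricm(y)u(y)\,\dv_g(y)\le u$;
\item it invokes the Liouville property for positive harmonic functions on $(M,\bg,\mu)$ to get $u=c+\underline{u}$;
\item it concludes from $G=\int_0^{+\infty}H\,dt$ together with stochastic completeness $\int_M H(t,x,y)\,d\mu(y)=1$.
\end{enumerate}

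You instead use only that $w=u^\alpha$ is a positive $L$-supersolution, and compare it with Dirichlet heat kernels on an exhaustion. This is the standard fact that positive supersolutions are excessive for the minimal heat semigroup. It needs neither the Liouville theorem, nor stochastic completeness, nor the case distinction, so it is both more elementary and more general.

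Your chain-rule observation also treats every $\alpha\in[0,1]$ at once, with a constant independent of $\alpha$. The paper only writes out $\alpha=1$; the general case then follows from Jensen, $\fint u^\alpha\,d\mu\le\left(\fint u\,d\mu\right)^\alpha$.

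The one point in your Step~2 that needs care is the jump of $v_k$ at $t=0$ along $\partial\Omega_k$. The weak (energy) form of the parabolic maximum principle handles it: test with $(v_k-w)_+$, which lies in $H^1_0(\Omega_k)$ for $t>0$ because $w>0$ on $\partial\Omega_k$. The same weak form also takes care of the limited regularity of $u$, which is $W^{2,p}_{loc}$ and need not be $\mcC^2$.
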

\proof If $\Ricm=0$, then $u$ is a positive harmonic function on a complete Riemannian with non-negative Ricci curvature, hence the Liouville property (\cref{|iouville}) implies that  $u$ is constant and the result is true. 
Assume that $\Ricm$ is not identically zero, then $(M,g,\dv_g)$ is non-parabolic. Now let us fix a point $o\in M$. And for any $R>0$, let $u_R$ be the minimal positive solution of the equation $\Delta u_R=\lambda\Ricm u\un_{B(o,R)}$. That is 
$$u_R(x)=\int_{B(o,R)} G(x,y)\lambda\Ricm(y) u(y)\dv_g(y)$$ where
$G$ is the minimal Green kernel of $(M,g\dv_g)$. The function $u_R$ is also the monotone limit as $\rho\to+\infty$ of the solution of the Dirichlet boundary problem 
$$\begin{cases} \Delta u_{R,\rho}=\lambda\Ricm\, u\un_{B(o,R)}&\text{ on } B(o,\rho)\\
u_{R,\rho}=0&\text{ on } \partial B(o,\rho).\end{cases}$$
The maximum principle yields that for any $\rho$ we have $u\ge u_{R,\rho}$ on $B(o,\rho)$ hence we always have
$u\ge u_R$, hence $\underline{u}=\sup_{R} u_R$ satisfies:
$$u\ge \underline{u}\text{ and } \forall x\in M\colon \underline{u}(x)=\int_M G(x,y)\lambda\Ricm(y) u(y)\dv_g(y).$$
Hence $u-\underline{u}$ is a non-negative harmonic function on $(M,g,\dv_g)$. Hence $u-\underline{u}$ is a non-negative harmonic function on $(M,\bg,d\mu)$.
But $(M,\bg,d\mu)$ has non negative Bakry-Emery Ricci curvature hence such a non-negative harmonic function is necessary constant by \cref{|iouville}. And  there is a constant $c\ge 0$ such that 
$$\forall x\in M\colon u(x)=c+\int_M G(x,y)\lambda\Ricm(y) u(y)\dv_g(y).$$
Using that $$G(x,y)=\int_0^{+\infty} H(t,x,y)dt$$ and that  $\int_M H(t,x,y)d\mu(y)=1$, we easily deduce that $$\forall t>0\colon u\ge e^{-tL}u.$$ Hence for any $z\in B_{\bg}(x,R):$
\begin{align*}
u(z)&\ge \int_M H(R^2,z,y)u(y)d\mu(y)\\
&\ge \int_{B_{\bg}(x,R)} H(R^2,z,y)u(y)d\mu(y)\\
&\ge \frac{1}{\gamma} \frac{e^{-4\gamma  }}{\mu\left( B_{\bg}(z,R)\right)}\int_{B_{\bg}(x,R)} u(y)d\mu(y)\text{   using the Li-Yaus'estimate \eqref{LY} }\\
&\ge \frac{e^{-4\gamma  }}{\gamma} \frac{\mu\left(B_{\bg}(x,R)\right)}{\mu\left( B_{\bg}(z,R)\right)}\fint_{B_{\bg}(x,R)} u(y)d\mu(y)\\
&\ge \frac{e^{-4\gamma  }}{\gamma}  2^{-N}\frac{\mu\left(B_{\bg}(x,2R)\right)}{\mu\left( B_{\bg}(z,R)\right)}\fint_{B_{\bg}(x,R)} ud\mu\text{   using the the volume comparison  \eqref{BG}}\\
&\ge \frac{e^{-4\gamma  }}{\gamma}  2^{-N}\fint_{B_{\bg}(x,R)} ud\mu.
\end{align*}
\endproof
\begin{rem} Using \cite[Proposition 4.4]{Carron:2022aa}, we know that in fact $u^\alpha$ is a $A_1$-weighted for any $\alpha\in \left(0,N/(N-2)\right)$.
\end{rem}
\subsection{With a Sobolev inequality} We assume now that moreover $(M^n,g)$ satisfies the Sobolev inequality \eqref{Sobn}. Such a Sobolev inequality implies that $(M^n,g)$  is non parabolic and according to \cite[Proposition 1.14]{carronlambda}, its minimal Green kernel $G$ satisfies that for a constant $C(n,\upgamma)$:
\begin{equation}\label{volfakeball}
\forall x\in M, \forall t>0\colon\ \vol_g\left\{ y\in \colon G(x,y)>t\right\}\le C(n,\upgamma)\ t^{\frac{n}{n-2}}.
\end{equation}
Hence for any $p>n/(n-1)$ and any $x\in M$:
$$\int_{M\setminus B(x,1)} G^p(x,y)\dv_g(y)<+\infty.$$
The same proof provides the same result about the equilibrium potential of a compact set. Recall that if $K\subset M$ is compact set, then its equilibrium potential $h_K\colon M\setminus K\rightarrow (0,1]$ is  the monotone limit of the solutions of  Dirichlet boundary value problem
$$\begin{cases}
\Delta h_R=0&\text{ on } B(o,R)\setminus K\\
h_R=1&\text{ on }  K\\
h_R=0&\text{ on } B(o,R).
\end{cases}
$$
Then 
\begin{equation}\label{integrapot}\forall p>n/(n-1), h_K\in L^p.\end{equation}
As the matter of fact if one tests the Sobolev inequality on the function 
$$\phi_\tau(x)=\begin{cases} \tau&\text{ if } h_K(x)>\tau\\
h&\text{ if } h_K(x)\le\tau.\end{cases}$$ The Green formula yields that 
$$\int_M |d\phi_\tau|^2_g\dv_g=\tau\int_{ h_K=\tau} \frac{\partial h_K}{\partial \vec n}d\mathcal H^{n-1}_g=\tau\int_{\partial h_K=1} \frac{h_K}{\partial \vec n}d\mathcal H^{n-1}_g=\tau\int_M |dh_K|^2_g\dv_g.$$ and hence
$$\upgamma\tau^2 \left(\vol_g\{ h_K>\tau\}\right)^{1-\frac 2n}\le \tau \int_M |dh_K|^2_g\dv_g.$$

\subsubsection{Integral bound on the gradient of the Green kernel}

\begin{prop}\label{Greengradient} Let $(M^n,g)$ be a complete  Riemannian manifold, $n\ge 3$ satisfying the hypothesis made in this section and assume $(M^n,g)$ is non parabolique. Let $G(o,\cdot)$ be its minimal Green kernel with pole at $o\in M$ and introduce the fake distance to $o$ by\footnote{where  $(n-2)|\bS^{n-1}|=1/c_n$.}
$$G(o,x)=\frac{c_n}{b^{n-2}(x)}.$$  Then, there is a constant $C$ such that  for any $r\ge 1$:
$$\int_{b=r} \frac{|db|_g^2}{r^{n-1}}d\cH^{n-1}_g\le C r^{1-\frac{1}{N-2}}.$$ 
Hence $$\int_{\{b\ge 1\}} \frac{|db|_g^3}{b^{n+1}} \dv_g=\int_1^{+\infty} \left( \int_{b=r}\frac{|db|_g^2}{r^{n+1}}d\cH^{n-1}_g\right)dr<+\infty.$$ \end{prop}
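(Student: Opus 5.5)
The plan is to reduce the surface integral to a pointwise gradient bound on the level set $\{b=r\}$, using that the flux of the Green kernel is conserved. Since $G(o,\cdot)=c_n b^{2-n}$, we have $dG=-(n-2)c_n b^{1-n}db$. Green's formula gives $\int_{\{G=t\}}|dG|_g\,d\cH^{n-1}_g=1$ for every regular value $t$, hence
$$\int_{\{b=r\}}|db|_g\,d\cH^{n-1}_g=|\bS^{n-1}|\,r^{n-1}.$$
Consequently
$$\int_{\{b=r\}}\frac{|db|_g^2}{r^{n-1}}\,d\cH^{n-1}_g\le |\bS^{n-1}|\,\sup_{\{b=r\}}|db|_g ,$$
and it suffices to prove $|db|_g\le C\,b^{1-\frac{1}{N-2}}$ on $\{b\ge 1\}$. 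The second assertion then follows from the coarea formula, since $\int_1^\infty r^{-2}\,r^{1-\frac1{N-2}}\,dr<\infty$.

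To get the pointwise bound I would work in the weighted manifold $(M,\bg,\mu)$. There $G$ is still the minimal Green kernel, so $G(o,\cdot)$ is $L$-harmonic away from $o$. Write $\rho(x)=d_{\bg}(o,x)$.

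\emph{Step 1.} The $\BE(0,N)$ condition gives a Cheng--Yau type gradient estimate for positive $L$-harmonic functions on $B_{\bg}(x,\rho/2)$, namely $|d\log G|_{\bg}\le C/\rho$. Since $|db|_g=u^{\frac1\lambda}|db|_{\bg}$ and $|d\log G|=(n-2)|db|/b$, this yields
$$|db|_g\le C\,\frac{b\,u^{\frac1\lambda}(x)}{\rho(x)} .$$

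\emph{Step 2.} The Li--Yau bounds \eqref{LY} integrate to $G(o,x)\ge c\int_{\rho^2}^\infty \frac{dt}{\mu(B_{\bg}(o,\sqrt t))}$. Together with \eqref{BG}, which gives $\mu(B_{\bg}(o,s))\le C s^N$ for $s\ge 1$, this yields $b^{2-n}\gtrsim\rho^{2-N}$, that is, $\rho\gtrsim b^{\frac{n-2}{N-2}}$.

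\emph{Step 3.} It remains to bound $u^{\frac1\lambda}(x)$ by roughly $\rho\, b^{-\frac{1}{N-2}}$. This is the step I expect to be the main obstacle. I would first try to combine three ingredients:
\begin{itemize}
\item the $A_1$ property of \pref{Mukhenhoupt} with $\alpha=2/\lambda$, which gives $\mu(B_{\bg}(x,s))\approx u^{\frac2\lambda}(x)\,\vol_g(B_{\bg}(x,s))$ up to Harnack-type control of $u$ on the ball;
\item the Sobolev inequality \eqref{Sobn}, which forces the $g$-volume of these sets to be at least $c\,(\text{$g$-size})^n$;
\item the bound \eqref{volfakeball} on $\vol_g\{G>t\}$, which converts $g$-volumes into powers of $b$.
\end{itemize}
Comparing the two expressions for $\mu(B_{\bg}(o,\rho))$ (the $\bg$-Bishop upper bound $\rho^N$ and the lower bound through $u^{\frac2\lambda}$ and $b^n$) should produce $u^{\frac1\lambda}\lesssim \rho\, b^{-\frac{1}{N-2}}$ after inserting the relation from Step~2.

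If this direct comparison fails, I would fall back on an integrated version. Instead of the supremum, I would bound $\int_{\{b=r\}}|db|_g^2$ by $\int_{\{b=r\}} b\,u^{\frac1\lambda}\rho^{-1}\,|db|_g$ using Step~1, and then average over $r\in[R,2R]$ via the coarea formula. This turns the level-set quantity into a volume integral of $u^{\frac1\lambda}$ against $\mu$-type measures, which the $A_1$ estimate of \pref{Mukhenhoupt} handles directly.
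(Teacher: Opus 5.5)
\textbf{There is a genuine gap.} Your Steps 1 and 2 are sound, and they can replace the paper's citation of Song. Cheng--Yau for positive $L$-harmonic functions under $\BE(0,N)$, combined with the Li--Yau lower bound on $G$, gives $|db|_{\bg}\le C\,b^{1-\frac{n-2}{N-2}}$ on $\{b\ge1\}$; for $n=3$ this is exactly the estimate the paper uses. The gap is in how you handle the factor $u^{1/\lambda}$ in $|db|_g=u^{1/\lambda}|db|_{\bg}$.

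\emph{Main route.} It needs a pointwise \emph{upper} bound on $u$ along $\{b=r\}$, and nothing in the setting provides one. The function $u=c+\int_M G(\cdot,y)\,\lambda\Ricm(y)u(y)\,\dv_g(y)$ is a potential of a nonnegative density, and such functions can have arbitrarily high local peaks. The $A_1$ property of \pref{Mukhenhoupt} points the wrong way, since it bounds averages by the infimum. At best it gives $\mu(B)\approx(\inf_B u)^{2/\lambda}\vol_g(B)$. That caps $\inf_B u$ but says nothing about $u(x)$ at a given point of the level set, and no uniform upper Harnack inequality is available for $u$.

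\emph{Fallback.} This does not close either. Averaging over $r\in[R,2R]$ gives $\int_{\{R<b<2R\}}|db|_g^3\,\dv_g=\int_{\{R<b<2R\}}u^{1/\lambda}|db|_{\bg}^3\,d\mu$. After applying $A_1$ you must bound the $\mu$-volume of a $\bg$-ball of radius comparable to $R$, and for that you only have the Bishop--Gromov bound $CR^N$. Since $N>n$, this loses a power of $R$. For $n=3$ you get $R^{N-\frac{3}{N-2}}$ instead of $R^{1-\frac{1}{N-2}}$. That is not even enough for $\int_1^\infty r^{-2}(\cdots)\,dr<\infty$ once $N\ge(3+\sqrt{13})/2$, and $N$ can be arbitrarily large under the hypotheses of this section. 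It would also only give an averaged statement, not the bound for every $r\ge1$.

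\emph{Missing idea.} Split $|db|_g^2$ the other way: $|db|_g^2=|db|_{\bg}\cdot u^{1/\lambda}|db|_g$. Bound the first factor pointwise using your Steps 1--2, and keep $u^{1/\lambda}|db|_g$ as a weighted flux density. Since $\Delta_g u=\lambda\Ricm u\ge0$ and $\lambda>1$, the function $u^{1/\lambda}$ is superharmonic on $(M,g)$. Green's formula (Colding's monotonicity) then gives, for every $r>0$,
\[
\frac{1}{r^{n-1}}\int_{\{b=r\}}u^{\frac1\lambda}|db|_g\,d\cH^{n-1}_g\le|\bS^{n-1}|\,u^{\frac1\lambda}(o).
\]
Combined with the pointwise bound, this yields the stated estimate for every $r\ge1$ with no upper bound on $u$ at all. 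Your opening reduction, which uses the unweighted flux identity and puts $u^{1/\lambda}$ inside a supremum, is exactly the step that has to be reversed.
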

\proof
We introduce the weighted Riemannian manifold $\left(M,\bg, d\mu=u^{\frac{2}{\lambda}}\dv_g\right)$. It satisfies the Bakry-Emery $\BE(0,N)$ curvature condition. We already have noticed that the Green kernel of $(M,g,\dv_g)$ and of $\left(M,\bg, d\mu=u^{\frac{2}{\lambda}}\dv_g\right)$ coincide. Hence according to \cite[Proposition 1.1]{Song_2014}, we know that there is a constant  $C$ such that when $b\ge 1$ then
$$|db|_{\bg} \le C b^{1-\frac{1}{N-2}};$$
So that when $r\ge 1$:
$$\int_{b=r} \frac{|db|_g^2}{r^{n-1}}d\cH^{n-1}_g=\int_{b=r} \frac{|db|_g\,|db|_{\bg} u^{\frac 1\lambda} }{r^{n-1}}d\cH^{n-1}_g\le Cr^{1-\frac{1}{N-2}} \int_{b=r} \frac{|db|_g\, u^{\frac 1\lambda} }{r^{n-1}}d\cH^{n-1}_g.$$
But the function $u^{\frac 1\lambda} $ is superharmonic hence the Green's formula implies that the function
$$r\mapsto \int_{b=r} \frac{|db|_g\, u^{\frac 1\lambda} }{r^{n-1}}d\cH^{n-1}_g$$ is non increasing \cite{Colding_2012}. Hence
for any $r>0$:
$$\int_{b=r} \frac{|db|_g\, u^{\frac 1\lambda} }{r^{n-1}}d\cH^{n-1}_g\le |\bS^{n-1}| \ u^{\frac 1\lambda}(o).$$
And we have obtained the estimate:
$$\forall r>1\colon \int_{b=r} \frac{|db|_g^2}{r^{n-1}}d\cH^{n-1}_g\le C|\bS^{n-1}| \ u^{\frac 1\lambda}(o)\ r^{1-\frac{1}{N-2}}.$$

\endproof

\subsubsection{$\mu$-volume of $\bg$ geodesic ball}
\begin{prop}\label{muvollower}  Let $(M^n,g)$ be a complete  Riemannian manifold, $n\ge 3$ satisfying the hypothesis made in this section and assume  moreover that it satisfies the Sobolev inequality \eqref{Sobn}.  There is a constant $\upsigma$ depending only on $n$ and $\lambda$ such that it one considers the weighted Riemannian manifold $\left(M,\bg=u^{\frac 2\lambda}g, d\mu\right)$, then for any $x\in M$ and any $R>0$:
$$\upsigma\,\upgamma^{\frac n2}   u^{-\frac{n-2}{\lambda}}(x) R^n \le \mu\left(B_{\bg}(x,R)\right).$$
\end{prop}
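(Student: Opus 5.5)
The plan is to test the Sobolev inequality \eqref{Sobn} of $g$ on a cut-off function adapted to a $\bg$-geodesic ball, rewrite both sides in terms of the weighted manifold $\left(M,\bg,d\mu\right)$, and control the weight $u^{-\frac2\lambda}$ that appears with the $A_1$ property of \pref{Mukhenhoupt}. The relevant identities are these. The Dirichlet energies agree: $\int_M|d\varphi|_g^2\dv_g=\int_M|d\varphi|^2_{\bg}\,d\mu$. Since $d\mu=u^{\frac2\lambda}\dv_g$, we also have $\int_M \varphi^{\frac{2n}{n-2}}\dv_g=\int_M\varphi^{\frac{2n}{n-2}}u^{-\frac2\lambda}\,d\mu$.

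Fix $x\in M$ and $R>0$, write $B_r=B_{\bg}(x,r)$, and take $\varphi(y)=\left(R-d_{\bg}(x,y)\right)_+$. Because $\bg$ is complete, $\overline{B_R}$ is compact, so $\varphi$ is a compactly supported Lipschitz function. By density, the Sobolev inequality applies to it. Since $|d\varphi|_{\bg}\le 1$ and $\varphi$ is supported in $B_R$, the energy satisfies $\int|d\varphi|^2_{\bg}d\mu\le\mu(B_R)$. On $B_{R/2}$ we have $\varphi\ge R/2$, so
$$\upgamma\left(\tfrac R2\right)^2\left(\int_{B_{R/2}}u^{-\frac2\lambda}d\mu\right)^{1-\frac2n}\le \mu(B_R)\le 2^N\mu(B_{R/2}),$$
where the last step is the Bishop--Gromov comparison \eqref{BG}.

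The key step, and the only real obstacle, is a lower bound on $\int_{B_{R/2}}u^{-\frac2\lambda}d\mu$ in terms of $u(x)$ alone. I would use Jensen's inequality for the convex function $s\mapsto s^{-\frac2\lambda}$, which gives $\fint_{B_{R/2}}u^{-\frac2\lambda}d\mu\ge\left(\fint_{B_{R/2}}u\,d\mu\right)^{-\frac2\lambda}$. Then \pref{Mukhenhoupt} with $\alpha=1$ gives $\fint_{B_{R/2}}u\,d\mu\le C\inf_{B_{R/2}}u\le C\,u(x)$. Hence $\int_{B_{R/2}}u^{-\frac2\lambda}d\mu\ge C^{-\frac2\lambda}u(x)^{-\frac2\lambda}\mu(B_{R/2})$, and this constant depends only on $n$ and $\lambda$.

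Plugging this into the display gives
$$c\,\upgamma R^2 u(x)^{-\frac{2(n-2)}{\lambda n}}\mu(B_{R/2})^{1-\frac2n}\le\mu(B_{R/2}),$$
that is, $\mu(B_{R/2})^{\frac2n}\ge c\,\upgamma R^2u(x)^{-\frac{2(n-2)}{\lambda n}}$. Raising to the power $n/2$ yields $\mu(B_R)\ge\mu(B_{R/2})\ge\upsigma\,\upgamma^{\frac n2}u(x)^{-\frac{n-2}{\lambda}}R^n$, where $\upsigma$ depends only on $n$ and $\lambda$ through $N$, the Li--Yau constant, and the $A_1$ constant. In the degenerate case $\Ricm\equiv0$, $u$ is constant and the same computation applies verbatim.
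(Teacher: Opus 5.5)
Your proof is correct and is essentially the paper's argument. You use the same cut-off $\bigl(R-d_{\bg}(x,\cdot)\bigr)_+$ in the Sobolev inequality rewritten for $(M,\bg,\mu)$, and your Jensen step is exactly the paper's H\"older step with $p=1+\frac2\lambda$, combined with the $A_1$ bound of \pref{Mukhenhoupt}. The only difference is the closing step: the paper iterates the inequality from $R$ down to $R/2^k\to0$, implicitly using that $\mu\bigl(B_{\bg}(x,r)\bigr)/r^n$ has a positive finite limit as $r\to0$, whereas you absorb $\mu(B_R)$ in one step with the doubling bound $\mu(B_R)\le 2^N\mu(B_{R/2})$ from \eqref{BG}, which is legitimate, slightly more self-contained, and only costs a harmless factor $2^{-Nn/2}$ in $\upsigma$.
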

\proof The Sobolev inequality implies that 
$$\forall \varphi\in \mcC^\infty_c(M)\colon \upgamma\left(\int_M \varphi^{\frac{2n}{n-2}}\, u^{-\frac 2\lambda}d\mu \right)^{1-\frac 2n}\le \int_M |d\varphi|_{\bg}^2d\mu.$$
Applying this inequality for  $\varphi(y)=\left(R-d_{\bg}(x,y)\right)_+$, one gets that
$$\upgamma\frac{R^2}{4} \left(\mu\left(B_{\bg}(x,R/2) \right)\right)^{1-\frac2n}\, \left(\fint_{B_{\bg}(x,R/2)}  u^{-\frac 2\lambda}d\mu \right)^{1-\frac 2n}\le \mu\left(B_{\bg}(x,R) \right).$$
Hölder inequality yields that for any $p>1:$
$$1\le  \left(\fint_{B_{\bg}(x,R/2)}  u^{-\frac 2\lambda}d\mu \right) \left(\fint_{B_{\bg}(x,R/2)}  u^{\frac{ 2}{(p-1)\lambda}}d\mu \right)^{p-1}.$$
With $p=1+2/\lambda$ and using the $A_1$-property of $u$ (\pref{Mukhenhoupt}), one gets that 
$$1\le  \left(\fint_{B_{\bg}(x,R/2)}  u^{-\frac 2\lambda}d\mu \right) C \min_{B_{\bg}(x,R/2)} u^{\frac2\lambda}\le  \left(\fint_{B_{\bg}(x,R/2)}  u^{-\frac 2\lambda}d\mu \right)\, C u^{\frac2\lambda}(x).$$
So that 
$$\upgamma\frac{R^2}{4 C^{\frac{n-2}{n}}} u^{-\frac{2(n-2)}{n\lambda}}(x)\left(\mu\left(B_{\bg}(x,R/2) \right)\right)^{1-\frac2n}\le \mu\left(B_{\bg}(x,R) \right).$$
If one defines $$\Theta=\upgamma^{\frac n2}2^{\frac{n^2-4n}{2}}C^{\frac{n-2}{2}}u^{-\frac{n-2}{\lambda}}(x),$$ one has 
$$\left( \frac{\mu\left(B_{\bg}(x,R/2) \right)}{\Theta (R/2)^n}\right)^{1-\frac2n}\le \frac{\mu\left(B_{\bg}(x,R) \right)}{\Theta R^n}.$$
It is now easy to iterate this inequality and get that 
$$\Theta R^n\le \mu\left(B_{\bg}(x,R) \right).$$
\endproof

\subsubsection{Sobolev inequality  for $\bg$}
\begin{prop}\label{Sobbarg} Let $(M^n,g)$ a complete Riemannian manifold satisfying the hypothesis made in this section and assume moreover that the Sobolev inequality \eqref{Sobn} holds and that $\Ricci\in L^{\frac n2}$, then  there is a positive constant $\upupsilon$ such that the conformal metric $\bg =u^{\frac 2\lambda} g$satisfies the Sobolev inequality:
$$\forall \varphi\in \mcC^\infty_c(M)\colon \upupsilon \left(\int_M \varphi^{\frac{2n}{n-2}}\dv_{\bg}\right)^{1-\frac 2n}\le \int_M |d\varphi|_{\bg}^2\dv_{\bg}.$$
\end{prop}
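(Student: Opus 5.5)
The plan is to transport the Sobolev inequality \eqref{Sobn} of $g$ to $\bg$ through the substitution $\psi=v\varphi$ with $v=u^{\frac{n-2}{2\lambda}}$, the same function used in the proof of \lref{scalarbarg}. Write $\kappa=\frac{n-2}{\lambda}$, so that $v^2=u^{\kappa}$ and, since $\lambda>n-2$, $\kappa\in(0,1)$. Because $\bg=v^{\frac{4}{n-2}}g$, we have
$$\dv_{\bg}=u^{\frac n\lambda}\dv_g=v^{\frac{2n}{n-2}}\dv_g \quad\text{and}\quad |d\varphi|^2_{\bg}\dv_{\bg}=u^{\kappa}|d\varphi|_g^2\dv_g .$$
Hence for $\varphi\in\mcC^\infty_c(M)$,
$$\int_M\varphi^{\frac{2n}{n-2}}\dv_{\bg}=\int_M\psi^{\frac{2n}{n-2}}\dv_g,\qquad \int_M|d\varphi|^2_{\bg}\dv_{\bg}=\int_M u^\kappa|d\varphi|^2_g\dv_g .$$
Applying \eqref{Sobn} to $\psi\in\mcC^2_c(M)$, it therefore suffices to prove the weighted bound
$$\int_M|d\psi|_g^2\dv_g\le C(n,\lambda)\int_M u^\kappa|d\varphi|_g^2\dv_g .$$

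Expanding, $d\psi=v\,d\varphi+\frac{\kappa}{2}\,\psi\, dw$ with $w=\log u$. This gives
$$|d\psi|^2\le 2u^\kappa|d\varphi|^2+\frac{\kappa^2}{2}|dw|^2\psi^2 .$$
The key step, and the one I expect to be the main obstacle, is a Hardy-type estimate controlling $\int_M|dw|^2\psi^2\dv_g$ by $\int_M u^\kappa|d\varphi|^2\dv_g$. I would obtain it from the equation $\Delta_g u=\lambda\Ricm u$ (with the geometer's sign convention $\Delta=d^*d$). This equation gives
$$\Delta_g w=\lambda\Ricm+|dw|^2_g .$$
Integrate this against the compactly supported test function $\varphi^2u^\kappa=\psi^2$. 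Since $d(\varphi^2u^\kappa)=2\varphi u^\kappa d\varphi+\kappa\varphi^2u^\kappa dw$, the integration by parts yields
$$(1-\kappa)\int_M|dw|^2\psi^2\dv_g+\lambda\int_M\Ricm\,\psi^2\dv_g=-2\int_M\varphi\,u^\kappa\la dw,d\varphi\ra\dv_g .$$

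The $\Ricm$ term is non negative and can be discarded. For the right-hand side, Cauchy--Schwarz gives the bound
$$\frac{1-\kappa}{2}\int_M|dw|^2\psi^2\dv_g+\frac{2}{1-\kappa}\int_M u^\kappa|d\varphi|^2\dv_g .$$
Since $\kappa<1$, the first term can be absorbed on the left, leaving
$$\int_M|dw|^2\psi^2\dv_g\le\frac{4}{(1-\kappa)^2}\int_M u^\kappa|d\varphi|^2\dv_g .$$
This is exactly where $\lambda>n-2$ is used. Substituting back gives the weighted bound with $C=2+\frac{2\kappa^2}{(1-\kappa)^2}$. We then conclude with
$$\upupsilon=\frac{\upgamma}{C}=\upgamma\,\frac{(1-\kappa)^2}{2(1-\kappa)^2+2\kappa^2}.$$

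Only the regularity $u\in\mcC^2$ and $u>0$ is needed to justify the integrations by parts, since $\varphi$ has compact support. Note that this route does not seem to use the assumption $\Ricci\in L^{\frac n2}$. If I have mis-signed a term, the fallback is the conformal Laplacian identity, with $E_h(f)=\int_M\left(\frac{4(n-1)}{n-2}|df|_h^2+\scal_h f^2\right)\dv_h$:
$$E_{\bg}(\varphi)=E_g(v\varphi).$$
Together with formula \eqref{scalbg} for $\scal_{\bg}$, that would give the same comparison. In that version, the terms $\Ricm\psi^2$ and $\scal_g\psi^2$ would be absorbed via Hölder and the $L^{\frac n2}$ hypothesis, at least away from a compact set, plus a standard compact-perturbation argument. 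That is presumably the role the hypothesis plays in the author's argument.
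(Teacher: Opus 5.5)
Your proof is correct, and it takes a different, more economical route than the paper's.

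\textbf{What is shared.} Both arguments rest on the same Hardy inequality. The paper gets it by Barta's trick applied to $u^\beta$, with $\beta=\frac{1-\kappa}{2}$ chosen to maximize $\beta(1-\beta-\kappa)$. After converting $\dv_{\bg}$ back to $\dv_g$, it reads exactly
$\int_M |d\log u|_g^2\,u^\kappa\varphi^2\dv_g\le \frac{4}{(1-\kappa)^2}\int_M u^\kappa|d\varphi|_g^2\dv_g$.
This is the bound, with the same constant, that you obtain by testing $\Delta_g\log u=\lambda\Ricm+|d\log u|_g^2$ against $\psi^2$. One small slip: the right-hand side of your identity should be $+2\int_M\varphi\,u^\kappa\la dw,d\varphi\ra\dv_g$. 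This is harmless, since you only use its absolute value.

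\textbf{Where the routes differ.} The difference is in how the Sobolev inequality is transported from $g$ to $\bg$.
\begin{enumerate}[i)]
\item The paper goes through the conformal Laplacian. It first needs the Yamabe quotient of $g$ to be positive on $M\setminus K$. It then uses conformal invariance and formula \eqref{scalbg}. There the terms $(\scal_g)_+ + 2(n-1)\Ricm$ must be absorbed by H\"older, and this is exactly where $\Ricci\in L^{\frac n2}$ is used. Finally it obtains a Sobolev inequality only outside a compact set, and needs a separate compact-perturbation argument to globalize it, with a non-explicit constant.
\item Your direct expansion of $|d(u^{\kappa/2}\varphi)|_g^2$ never introduces $\scal_g$. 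It therefore needs no curvature integrability and no globalization step. It also yields an explicit $\upupsilon$ depending only on $\upgamma$, $n$ and $\lambda$.
\end{enumerate}

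In fact your argument uses only $u>0$, $\Delta_g u\ge 0$ and $\kappa=\frac{n-2}{\lambda}<1$. So your observation is right: the hypothesis $\Ricci\in L^{\frac n2}$ is superfluous for this proposition, and your fallback paragraph is unnecessary. This is not merely cosmetic. As far as I can see, this proposition is the only place where the proof of \tref{3D-full} uses integrability of the positive part of the Ricci tensor.
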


\proof
Let $K\subset M$ be a compact set such that 
$$\frac{n-2}{4(n-1)}\left(\int_{M\setminus K} (\scal_g)_-^{\frac n2}\dv_g\right)^{\frac 2n}\le \frac{\upgamma}{2}$$ then using the Hölder inequality, one gets that 
\begin{align*}\forall \varphi\in& \mcC^\infty_c(M\setminus K)\colon\\
& \frac{\upgamma}{2}\left(\int_{M\setminus K} \varphi^{\frac{2n}{n-2}}\dv_{g}\right)^{1-\frac 2n}\le \int_{M\setminus K} \left[|d\varphi|_{g}^2+\frac{n-2}{4(n-1)}\scal_g \varphi^2\right]\ \dv_{g}.\end{align*}
That is the Yamabe constant of  $(M\setminus K, g)$ is positive, by conformal invariance of the Yamabe constant, one gets that 
\begin{align*}\forall \varphi\in& \mcC^\infty_c(M\setminus K)\colon\\
& \frac{\upgamma}{2}\left(\int_{M\setminus K} \varphi^{\frac{2n}{n-2}}\dv_{\bg}\right)^{1-\frac 2n}\le \int_{M\setminus K} \left[|d\varphi|_{\bg}^2+\frac{n-2}{4(n-1)}\scal_{\bg} \varphi^2\right]\ \dv_{\bg}.\end{align*}
Recall the formula \eqref{scalbg}:
$$\scal_{\bg}u^{\frac 2\lambda}=\scal_g+2(n-1)\Ricm+2(n-1)\left(1-\frac{n-1}{2\lambda}\right) \frac{ |du|^2_g}{u^2}.$$
Noticed that 
$$\left((\scal_g)_++2(n-1)\Ricm\right)^{\frac n2} u^{-\frac n\lambda}\dv_{\bg}=\left((\scal_g)_++2(n-1)\Ricm\right)^{\frac n2} \dv_{g}.$$
Hence if one considers a compact set $K_0$ containing $K$ with
$$\frac{n-2}{4(n-1)}\left( \int_{M\setminus K_0 } \left((\scal_g)_++2(n-1)\Ricm\right)^{\frac n2} \dv_{g}\right)^{\frac 2n}\le \frac14 \upgamma$$ then  one gets that 
\begin{align*}\forall \varphi\in& \mcC^\infty_c(M\setminus K_0)\colon\\
& \frac{\upgamma}{4}\left(\int_{M\setminus K_0} \varphi^{\frac{2n}{n-2}}\dv_{\bg}\right)^{1-\frac 2n}\le \int_{M\setminus K_0} \left[|d\varphi|_{\bg}^2+b \frac{ u^{-\frac{2}{\lambda}}|du|^2_g}{u^2} \varphi^2\right]\ \dv_{\bg}.\end{align*}
where $b=\frac{n-2}{2}\left(1-\frac{n-1}{2\lambda}\right)$.

The formula for the Laplacian of $\bg$ is 
$$\Delta_{\bg}\Psi=u^{-\frac 2\lambda}\left(\Delta_g \Psi-\frac{n-2}{\lambda}\la \log u,\Psi\ra_g\right)$$
Hence for $\Psi=u^\beta$ one gets
$$\Delta_{\bg}u^\beta=u^{-\frac 2\lambda}\left(\beta\lambda \Ricm u^\beta+\beta(1-\beta)|du|^2 u^{\beta-2}-\beta\frac{n-2}{\lambda}du|^2 u^{\beta-2}\right)$$
Chosing $\beta=\frac12 \left(1-\frac{n-1}{\lambda}\right)$, one gets 
$$\Delta_{\bg}u^\beta\ge \beta^2\frac{ u^{-\frac 2\lambda} |du|^2_g}{u^2}\ u^\beta,$$
And this implies that (see \pref{Barta}):
$$\forall \varphi\in \mcC^\infty_c(M)\colon  \beta^2 \int_{M}\frac{ u^{-\frac 2\lambda} |du|^2_g}{u^2}\varphi^2\ \dv_{\bg}\le  \int_M |d\varphi|_{\bg}^2\dv_{\bg}.$$
Letting $a=  1+b\beta^{-2}$, one gets that for  any $\varphi\in \mcC^\infty_c(M\setminus K)$:
$$
\frac{\upgamma}{4a}\left(\int_{M\setminus K} \varphi^{\frac{2n}{n-2}}\dv_{g}\right)^{1-\frac 2n}\le 
\int_M |d\varphi|_{\bg}^2\dv_{\bg}.$$
And  this implies the Sobolev inequality holds globally on $(M,\bg)$ (see \cite[Proposition 2.5]{Carron_1998}).
\endproof
\section{A Vanishing result for the $\upalpha$-genus}
\subsection{}This section is devoted to the proof of the following improvement of \cite{Carron_2002}:
\begin{theo}\label{Vanishing} Let $(M^n,g)$ be a complete Riemannian spin manifold $(M^n,g)$ with closed boundary $\partial M$ satisfying the Sobolev inequality \eqref{Sobn} and with non-negative scalar curvature 
$$\scal_g\ge 0$$ then if $\varsigma$ is the induced spin structure on $\partial M$:
$$\upalpha(M,\varsigma)=0.$$
\end{theo}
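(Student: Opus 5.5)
The plan is to follow the strategy of \cite{Carron_2002}: realize $\upalpha(\partial M,\varsigma)$ as the index of a Dirac operator on $M$ with a suitable boundary condition, and show this index vanishes by a Lichnerowicz argument. The Sobolev inequality replaces any uniform positivity of $\scal_g$ or bounded geometry assumption.

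First I set up the index. By bordism invariance and the Atiyah–Patodi–Singer theorem (or the Gromov–Lawson relative index theorem), $\upalpha(\partial M,\varsigma)$ equals the index of the Dirac operator $\Dirac$ of $M$ acting on spinors satisfying a chiral (or APS-type) boundary condition on $\partial M$. When $n$ is odd and $\partial M$ is even dimensional, this is the Atiyah–Singer index of $\partial M$, realized through a suitable Clifford-linear, chiral boundary condition. Since $M$ is noncompact, this index must be taken in an $L^2$ or weighted Sobolev setting. I use the fact, proved in \cite{Carron_1998,Carron_2002}, that the Sobolev inequality makes $\Dirac$ non-parabolic at infinity. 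Concretely, Kato's inequality $|d|\psi||\le|\nabla\psi|$, the Sobolev inequality and the Lichnerowicz formula give
$$\|\Dirac\psi\|^2_{L^2}\ge \|\nabla\psi\|^2_{L^2}\ge \upgamma\|\psi\|_{L^{2n/(n-2)}}^2$$
for spinors supported outside a compact set. This holds because $\scal_g\ge0$, and the Kato step is where $\scal_g\ge 0$ is used at infinity. Hence $\Dirac$ is Fredholm from the completion $W$ of compactly supported spinors (with the boundary condition) in the norm $\|\Dirac\psi\|_{L^2}+\|\psi\|_{L^2(K)}$ to $L^2$. Its index is then a relative index, equal by excision to $\upalpha(\partial M,\varsigma)$; here I compare with a compact manifold obtained by capping off, or use the standard argument in \cite{Carron_2002}.

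Second, I show that the kernel and cokernel are trivial. Take $\psi\in W$ with $\Dirac\psi=0$. Integrate the Lichnerowicz formula $\Dirac^2=\nabla^*\nabla+\scal_g/4$ against $\psi$ with a cutoff $\chi_R$. The boundary term on $\partial M$ has a sign or vanishes, thanks to the chosen chiral boundary condition; the boundary Dirac operator appears through the mean-curvature-free term, and the chirality condition kills it. This yields $\int\chi_R^2(|\nabla\psi|^2+\tfrac{\scal_g}4|\psi|^2)\le C\int|d\chi_R|^2|\psi|^2$. Since $\psi\in L^{2n/(n-2)}$, Hölder's inequality gives $\int|d\chi_R|^2|\psi|^2\le C\|\psi\|^2_{L^{2n/(n-2)}(R<r<2R)}\to0$, because $\int|d\chi_R|^n$ is scale invariant. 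Thus $\nabla\psi=0$, so $|\psi|$ is constant and lies in $L^{2n/(n-2)}$. Infinite volume, which follows from the Sobolev inequality, then forces $\psi=0$. The same argument applies to the adjoint problem, which carries the complementary boundary condition. Hence the index vanishes and $\upalpha(\partial M,\varsigma)=0$.

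The main obstacle is the first step: identifying the Fredholm index in the non-$L^2$ space $W$ with the topological invariant $\upalpha(\partial M,\varsigma)$. This must be done in a way that is stable when only a Sobolev inequality is known at infinity, with no bounded geometry. I will try a gluing argument: double along a neighbourhood, or compare two Fredholm problems that agree near infinity. The relative index theorem then reduces to the compact situation, with the Sobolev-induced coercivity at infinity ensuring the error terms are controlled.
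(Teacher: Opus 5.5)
Your proposal has two genuine gaps, and they sit exactly where the paper's proof does its real work.

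\textbf{The index identification in your first step.} The identification of a Fredholm index on $(M,W)$ with $\upalpha(\partial M,\varsigma)$ cannot come from excision. There is no compact model with the same end to compare against, and the index of a chiral problem on $W$ depends on the geometry at infinity. For instance, take an exterior conical region $[1,\infty)\times Y$ with $g=dr^2+r^2h$. The harmonic spinors $r^{-\frac{n-1}{2}-\lambda}\phi_\lambda$, where $\Dirb_Y\phi_\lambda=\lambda\phi_\lambda$, lie in $W$ for $\lambda>-\frac12$ but lie in $L^2$ only for $\lambda>\frac12$. Consequently your chiral index counts the part of $\ker\Dirb_Y$ of one chirality plus the eigenvalues of $\Dirb_Y$ in $(0,\frac12)$ (up to orientation conventions). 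That is a metric-dependent quantity, not a topological invariant of $\partial M$.

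Moreover, for $n=3$, which is the case the paper needs, $\upalpha$ of the boundary surface is the mod $2$ invariant $\dim\ker\Dirb^+ \pmod 2$. Your plan only mentions the Atiyah--Singer index, which vanishes identically on surfaces, and it never uses the quaternionic structure needed to detect the mod $2$ invariant. The paper instead works on $\partial M$. The Dirichlet-to-Neumann map yields a self-adjoint pseudodifferential operator $T+T^*$ that anticommutes with $\cl(\vec n)$ and has principal symbol $2i\cl(\xi)$, proportional to that of $\Dirb$. Hence its graded index (or, after composing with the parallel structures $J$ or $\widehat J$, its mod $2$ index) equals $\upalpha(\partial M,\varsigma)$, and it suffices to prove $\ker(T+T^*)=\{0\}$. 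This is the Calder\'on-projector proof of cobordism invariance, transplanted to a noncompact $M$.

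\textbf{The vanishing argument in your second step.} Integrating the Lichnerowicz formula over a manifold with boundary produces the boundary term $\int_{\partial M}\bigl(\la\Dirb\psi,\psi\ra-\tfrac H2|\psi|^2\bigr)$, up to conventions. The chiral condition kills the first piece but not the mean-curvature term. Since nothing is assumed about the sign of $H$, you cannot conclude $\nabla\psi=0$. Your cutoff estimate also uses $\int|d\chi_R|^n\le C$, which is an upper Euclidean volume bound that the Sobolev inequality does not supply.

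The true obstruction is the one the cone example exhibits: harmonic spinors in $W$ that are not in $L^2$. For $L^2$ harmonic spinors, Green's formula gives $\int_{\partial M}\la\cl(\vec n)\psi,\psi\ra=0$, which kills the chiral kernel without any curvature hypothesis. What you are missing is the paper's decay improvement:
\begin{enumerate}[i)]
\item If $\psi\in W$ is harmonic, the refined Kato inequality together with $\scal_g\ge0$ makes $|\psi|^{\frac{n-2}{n-1}}$ subharmonic.
\item Moser iteration, starting from $\psi\in L^{\frac{2n}{n-2}}$ (which you do obtain correctly), shows this function tends to $0$ at infinity.
\item The maximum principle then bounds it by a multiple of the equilibrium potential, which lies in $L^p$ for $p>\frac n{n-2}$ by the Sobolev inequality.
\item Hence $|\psi|\in L^q$ for every $q>\frac n{n-1}$, in particular $\psi\in L^2$.
\end{enumerate}
This is precisely what yields $\ker(T+T^*)=\{0\}$.
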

\begin{rems}
\begin{enumerate}[i)]
\item Let us clarify here what we mean by  $(M,g)$ being a complete Riemannian manifold with closed boundary:  introducing the geodesic distance $d_g$, complete  mean that $M$ is connected and that for all $R\ge 0$, 
 the sets $$M_R=\{x\in M\cup \partial M, d_g(x,\partial M)\le R\}$$ are compact or equivalently that the geodesic distance is complete on $\overline{M}=M\cup \partial M$. It is also equivalent to the fact that there are a complete Riemannian manifold $(N,g_N)$ and a compact set $K$ such that $(M,g)$ is isometric to some connected component of $(N\setminus K,g_N)$. 
\item  The space $\mcC^\infty_c(M)$ is the space of function  $\phi\in \mcC^\infty(\overline{M})$ such that there is  $R>r>0$ such that 
$\supp\phi\subset M_R\setminus M_r$.  Elements of $\mcC^\infty_c(M)$ vanishes along $\partial M$. We can also introduce $\mcC^\infty_c(\overline{M})$ who is the space of smooth function  $\phi\in \mcC^\infty_c(\overline{M})$ such that there is some $R>0$ such that 
$\supp\phi\subset M_R$. According to \cite[Proposition 2.5]{Carron_1998}, the following two Sobolev inequalities are equivalent
$$\exists \upgamma>0\colon \forall \varphi\in \mcC^\infty_c(M)\colon \upgamma\left(\int_M \varphi^{\frac{2n}{n-2}}\dv_g\right)^{1-\frac 2n}\le \int_M |d\varphi|^2\dv_g$$
$$\exists \upgamma'>0\colon \forall \varphi\in \mcC^\infty_c(\overline{M})\colon \upgamma'\left(\int_M \varphi^{\frac{2n}{n-2}}\dv_g\right)^{1-\frac 2n}\le \int_M |d\varphi|^2\dv_g.$$
\item Recall that if $(N,g,\varsigma)$ is a closed Spin Riemannian manifold and if $\Dirb\colon \mcC^{\infty}(N,\Spin)\to \mcC^{\infty}(N,\Spin)$ is its Dirac operator then \cite[Theorem 7.10]{LM}:
$$\upalpha(M,\varsigma)=\begin{cases}
\ind \Dirb^+ &\text{ if } n=8k\\
\frac12 \ind \Dirb^+&\text{ if } n=8k+4\\
\dim \ker  \Dirb^+\,(\text{mod }2)&\text{ if } n=8k+2\\
\dim \ker \Dirb\;\;\;(\text{mod }2)&\text{ if } n=8k+1.\end{cases}$$
\end{enumerate}
\end{rems}

This results has the following interesting topological corollaries:
\begin{coro}\label{3D2D} Let $(M^3,g)$ be a complete Riemannian spin manifold with closed and connected boundary $\partial M$ satisfying the Sobolev inequality \eqref{Sob} and with non-negative scalar curvature. If $H^1(M,\Z_2)\rightarrow H^1(\partial M,\Z_2)\rightarrow \{0\}$ then $\partial M$ is diffeomorphic to $\bS^2$.
\end{coro}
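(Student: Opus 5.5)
The plan is to apply \tref{Vanishing} to the surface $\partial M$ with the spin structure $\varsigma$ induced from $M$, and then to use the hypothesis on $H^1(\cdot,\Z_2)$ to choose the spin structure on $M$ so that $\varsigma$ is the one with nonzero $\upalpha$-invariant unless $\partial M$ is a sphere. Since $n-1=2=8\cdot 0+2$, the invariant of $\partial M$ is $\upalpha(\partial M,\varsigma)=\dim\ker\Dirb^+ \pmod 2$. This is the Atiyah mod $2$ invariant of the spin structure, equivalently the Arf invariant of the associated quadratic form $q_\varsigma\colon H_1(\partial M,\Z_2)\to\Z_2$.

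First, $\partial M$ is a closed connected orientable surface, since $M$ is orientable. Let $g_0$ denote its genus; we must show $g_0=0$. Spin structures on $\partial M$ form a torsor over $H^1(\partial M,\Z_2)$, and spin structures on $M$ form a torsor over $H^1(M,\Z_2)$. Restriction is equivariant with respect to the restriction map $H^1(M,\Z_2)\to H^1(\partial M,\Z_2)$. Fix one spin structure on $M$ (it exists since $M$ is spin) with induced structure $\varsigma_0$. By the surjectivity hypothesis, every spin structure $\varsigma$ on $\partial M$ equals $\varsigma_0+a$ for some $a$ in the image, so every spin structure on $\partial M$ is induced by some spin structure on $M$. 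Applying \tref{Vanishing}, whose hypotheses (completeness with closed boundary, \eqref{Sob}, $\scal_g\ge 0$) do not involve the choice of spin structure, yields $\upalpha(\partial M,\varsigma)=0$ for \emph{all} $2^{2g_0}$ spin structures on $\partial M$.

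The remaining step is the topological fact that when $g_0\ge 1$ some spin structure on a genus-$g_0$ surface has odd Arf invariant. Indeed, exactly $2^{g_0-1}(2^{g_0}-1)\ge 1$ of the quadratic refinements of the intersection form have Arf invariant $1$. By Atiyah's theorem (``Riemann surfaces and spin structures''), $\dim\ker\Dirb^+\pmod 2$ equals this Arf invariant, independently of the metric. Concretely, on a torus with the product of nontrivial (periodic) structures, the flat metric has harmonic constant spinors, so $\dim\ker\Dirb^+=1$. This contradicts the vanishing, so $g_0=0$ and $\partial M\cong\bS^2$.

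The main obstacle is bookkeeping rather than analysis. We must make sure the surjectivity $H^1(M,\Z_2)\to H^1(\partial M,\Z_2)$ really lets us realize every boundary spin structure, in particular one with odd Arf invariant. We must also confirm that the $\upalpha$-invariant in \tref{Vanishing} is the mod $2$ index, which is a spin-cobordism invariant insensitive to the metric. I would cite Atiyah for the identification with the Arf invariant, and the equivariance of restriction of spin structures for the first point.
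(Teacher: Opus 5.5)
Your proof is correct and follows the paper's argument. Surjectivity of $H^1(M,\Z_2)\to H^1(\partial M,\Z_2)$, together with the equivariance of restricting spin structures, lets you realize Atiyah's odd spin structure on a positive-genus $\partial M$ as the induced one, which contradicts \tref{Vanishing}; you simply spell out the torsor bookkeeping and the Arf-invariant count that the paper takes for granted.
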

 \begin{coro}\label{simpleconnexe}Let $(M^3,g)$ be a complete Riemannian spin manifold $(M^n,g)$ without boundary satisfying the Sobolev inequality \eqref{Sob} and with non-negative scalar curvature. If   $\pi_2(M)=\{0\}$ then $M$ is simply connected.
\end{coro}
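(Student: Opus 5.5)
The strategy is to argue by contradiction and reduce everything to \cref{3D2D}. The idea is to cut $M$ along the boundary of a compact domain carrying a non-trivial loop. \cref{3D2D} forces that boundary to be a sphere. The hypothesis $\pi_2(M)=\{0\}$ then lets us fill the sphere with a ball, which kills the loop.

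First I check that \cref{3D2D} applies to exteriors. Let $K\subset M$ be a compact smooth domain, and let $N$ be an unbounded connected component of $M\setminus \mathrm{int}\, K$. Then $N$ is a complete Riemannian spin manifold with closed boundary, with the spin structure restricted from $M$. It satisfies $\scal_g\ge 0$. It also satisfies \eqref{Sob}, since $\mcC^\infty_c(N)\subset \mcC^\infty_c(M)$. I will always absorb the bounded components of $M\setminus K$ into $K$. The Sobolev inequality gives infinite volume to every end, so this leaves only unbounded components.

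Assume $\pi_1(M)\neq\{0\}$ and choose a non-contractible loop $c$. Let $K$ be a compact domain containing $c$, built so that each remaining exterior component $N$ has connected boundary $\Sigma=\partial N$ and satisfies the hypothesis of \cref{3D2D}, namely
$$H^1(N,\Z_2)\rightarrow H^1(\Sigma,\Z_2)\rightarrow \{0\}.$$
To get connected boundaries, I start from a regular neighbourhood of $c$, or of a finite graph. I then add to $K$ tubes (1-handles) joining different boundary components that face the same unbounded component $N$.

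For surjectivity I use the Mayer--Vietoris sequence of $M=K\cup N$ with $\Z_2$ coefficients:
$$H^1(M)\to H^1(K)\oplus H^1(N)\to H^1(\Sigma)\to H^2(M)\to H^2(K)\oplus H^2(N).$$
The plan is to enlarge $K$ by compressing $\Sigma$ along discs inside $N$ until the classes of $H^1(\Sigma)$ not coming from $N$ disappear. Compressing along such a disc $D\subset N$ means adding a neighbourhood of $D$ to $K$. This lowers the genus of $\Sigma$, so the process terminates. Whenever the process stalls, the half-lives-half-dies count for $N$ should produce a compressing disc on the $N$ side via the loop theorem. Once surjectivity holds, \cref{3D2D} gives $\Sigma\simeq \bS^2$ for every boundary component of the exterior.

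I then conclude as follows. All boundary components of $K$ are spheres. By $\pi_2(M)=\{0\}$, each such sphere is null-homotopic. By the sphere theorem, together with an irreducibility argument, each one bounds a ball, either in $K$ or in $N$. It cannot bound a ball on the unbounded side, since $N$ is non-compact. So $K$ is a compact 3-manifold whose boundary spheres are all capped off by balls in $M$. This forces $K$ together with those balls to be all of $M$, which contradicts non-compactness, or forces $c$ to be contractible, which contradicts the choice of $c$.

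The main obstacle is the topological construction of $K$ with connected boundary such that $H^1(N,\Z_2)\to H^1(\partial N,\Z_2)$ is onto. In particular, I must control the contribution of $H^2(M,\Z_2)$ in the Mayer--Vietoris sequence and make sure the compression process really terminates. I would first try the compression/loop-theorem route described above. If that fails, the fallback is to work with a Scott compact core $K$ of a suitable cover. This needs care, however, because the Sobolev inequality need not lift to covers.
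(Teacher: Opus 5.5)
There is a genuine gap, and it is the step you postpone: the construction of $K$.

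\textbf{The compression step does not work.} The loop theorem needs a curve on $\Sigma$ that is trivial in $\pi_1(N)$. Failure of surjectivity of $H^1(N;\Z_2)\to H^1(\Sigma;\Z_2)$ only gives a curve that is trivial in $H_1(N;\Z_2)$. The longitude of a non-trivial knot exterior is null-homologous, yet the boundary torus is incompressible. Also, half-lives-half-dies is a statement about compact manifolds, and $N$ is not compact.

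\textbf{Inside $M$ itself the required $K$ need not exist.} Let $M=\bigcup_i T_i$ be an increasing union of solid tori, with $T_i$ embedded in $T_{i+1}$ as a neighbourhood of a curve winding twice around the core. This $M$ is orientable (hence spin), aspherical and one-ended. Moreover $\pi_1(M)=\Z[1/2]\neq\{1\}$ and $H_1(M;\Z_2)=H_2(M;\Z_2)=0$. Take any compact domain $K\supset c$ with connected boundary $\Sigma$ of genus $g$ and complement $N$.
\begin{enumerate}[i)]
\item Mayer--Vietoris gives $H_1(\Sigma;\Z_2)\cong H_1(K;\Z_2)\oplus H_1(N;\Z_2)$.
\item Half-lives-half-dies for $K$ gives $\dim H_1(K;\Z_2)=g$, hence $\dim H_1(N;\Z_2)=g$.
\item The hypothesis of \cref{3D2D} is injectivity of $H_1(\Sigma;\Z_2)\to H_1(N;\Z_2)$, which therefore forces $g=0$.
\item A sphere $\Sigma$ would bound a ball on the $K$ side, making $c$ contractible.
\end{enumerate}
So no admissible $K$ exists on this $M$, although the corollary does rule out such metrics on it. Your scheme cannot reach the contradiction here.

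\textbf{The missing idea is to work in a covering.} Your reason for avoiding covers is mistaken: \pref{revsob} shows that \eqref{Sob} passes to every Riemannian covering with the same constant $\upgamma$. The spin structure and $\scal\ge 0$ lift trivially. The paper then argues as follows.
\begin{enumerate}[i)]
\item Since $M$ is open and $\pi_2(M)=0$, $\tilde M$ is contractible, so $\pi_1(M)$ is torsion-free.
\item A non-trivial $\gamma$ generates $\Gamma\cong\Z$, and $\widehat M=\tilde M/\Gamma$ is a $K(\Z,1)$. Hence $H^2(\widehat M;\Z_2)=0$.
\item For a solid torus $\cU$ around a loop generating $\pi_1(\widehat M)$, the restriction $H^1(\widehat M;\Z_2)\to H^1(\cU;\Z_2)$ is an isomorphism.
\item Mayer--Vietoris then makes $H^1(\widehat M\setminus\cU;\Z_2)\to H^1(\partial\cU;\Z_2)$ onto.
\item \cref{3D2D} applied to $\widehat M\setminus\cU$ says the torus $\partial\cU$ is $\bS^2$, a contradiction.
\end{enumerate}
Passing to the cyclic cover removes both obstructions you ran into: the $H^2$ term and loops invisible to $H^1(\,\cdot\,;\Z_2)$. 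It also needs no compression, loop theorem or sphere theorem.
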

\subsection{Proof of the corollaries} Before embarquing in the proof of \tref{Vanishing}, let's explain how it implies these two corollaries:
\begin{proof}[Proof of \cref{3D2D}]Assume that $\partial M$ is not diffeomorphic to $\bS^2$. According to Atiyah \cite{Atiyah_1971}, then $\partial M$ has a spin structure $\varsigma$ with $\upalpha(\partial M,\varsigma)=1(\text{mod }2)$. Moreover $H^1(M,\Z_2)$ acts transitively on the set of spin structure of $M$ and  $H^1(\partial M,\Z_2)$ acts transitively on the set of spin structure of $\partial M$. Hence the surjectivity of the map  $H^1(M,\Z_2)\to H^1(\partial M,\Z_2)$  implies that  there is a spin structure on $M$ for which the induced spin structure is $\varsigma$ on $\partial M$. Recalling that $\upalpha(\partial M,\varsigma)=1(\text{mod }2)$, we see that \tref{Vanishing} implies that either the Sobolev inequality \eqref{Sobn} is not satisfied or the scalar curvature of $g$ is negative somewhere. 
\end{proof}
\begin{proof}[Proof of \cref{simpleconnexe}] We assume that $(M^3,g)$ is a complete Riemannian spin manifold $(M^n,g)$ without boundary satisfying the Sobolev inequality \eqref{Sob}, with non-negative scalar curvature and that $\pi_2(M)=\{0\}$. The universal cover of $M$ is contractible. And moreover we know that every non trivial element in $\pi_1(M)$ has infinite order (see \cite[Proof lemma 3]{Schoen_1982}). Assume by contradiction that we can find a non trivial $\gamma\in \pi_1(M)$. And let $\Gamma$ the infinite cyclic group generated by $\gamma$ and let $\widehat M=\widetilde{M}/\Gamma$ that we endowed with the Riemannian metric $\widehat g$ induced by $g$, the scalar curvature of $\widehat g$ is non negative and according to \pref{revsob}, $(\widehat M,\widehat g)$ satisfies the Sobolev inequality \eqref{Sob}.

 We know that $\widehat M$ is a $\text{K}(\Z,1)$, so that 
$H^2(\widehat M,\Z_2)=\{0\}$. Let $c\subset \widehat M$ a loop that generates $\pi_1(\widehat M)=\Z$. And let $\cU\subset M$ be a tubular neighborhood  of $c$. So that $\cU\simeq \bD\times \bS^1$ and $\partial U\simeq \bS^1\times \bS^1$. By construction the map $H^1(\widehat M,\Z_2)\to H^1(\cU,\Z_2)$ is an isomorphism hence the Mayer-Vietoris exact sequence 
$$H^1(\widehat M,\Z_2)\to H^1(\cU,\Z_2)\oplus H^1(\widehat M\setminus \cU,\Z_2)\to H^1(\partial\cU,\Z_2) \to H^2(\widehat M,\Z_2)=\{0\}.$$ implies that 
$$H^1(\widehat M\setminus \cU,\Z_2)\to H^1(\partial\cU,\Z_2) \to \{0\}$$ and this is a contracdiction with \cref{3D2D}.\end{proof}
\subsection{Proof of  \tref{Vanishing}}
We assume here that $(M^n,g)$ is a complete Riemannian spin manifold $(M^n,g)$ with closed boundary $\partial M$ satisfying the Sobolev inequality \eqref{Sobn} and with non-negative scalar curvature.
\subsubsection{Construction of the Dirichlet to Neuman map} We start by recalling the main construction of \cite{Carron_2001} in our setting. Let $\Spn\rightarrow M$ be the spin bundle over $M$ and $$\Dirac\colon \mcC^\infty(M,\Spn)\rightarrow \mcC^\infty(M,\Spn)$$ be the associated Dirac operator. We let $\nabla$ be the Levi-Civita connexion or the induced connexion on $\Spn$. 
As the scalar curvature of $g$ is non negative, the Dirac operator $\Dirac$ is non parabolic at infinity. If one considers a compact set $K\subset \overline{M}$ containing $\partial M$ and with non empty interior, we define $\mcW(M,\Spn)$ who is the completion of the space $\mcC_c^\infty(\overline{M},\Spn)$ with respect to the norm $$\sigma\mapsto \sqrt{ \| \Dirac\sigma\|_{L^2}^2+\| \sigma\|_{L^2(K)}^2}.$$
 
This space does not depend on the choice of $K$ and there is a natural continuous map
$$\mcW(M,\Spn)\to \mcW^{1,2}_{loc}(\overline{M},\Spn). $$ There is also a continuous trace map
$$\tr\colon \mcW(M,\Spn)\to H^{1/2}(\partial M,\Spn),$$ so that an equivalent norm on $\mcW(M,\Spn)$ is 
$$\sigma\mapsto \sqrt{ \| \Dirac\sigma\|_{L^2}^2+\| \tr(\sigma)\|_{H^{1/2}}^2}=\|\sigma\|_{\mcW}.$$
\begin{prop} If $\sigma\in H^{1/2}(\partial M,\Spn)$ then there is a unique $h(\sigma)\in \mcW(M,\Spn)$ such that 
$$\tr(h(\sigma))=\sigma\text{ and } \Dirac^2(h(\sigma))=0.$$
Moreover if $\sigma\in \mcC^\infty(\partial M,\Spn)$ then $h(\sigma)\in  \mcC^\infty(\overline{M},\Spn)$.
\end{prop}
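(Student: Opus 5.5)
The plan is to find $h(\sigma)$ as $h(\sigma)=E\sigma-\psi$. Here $E\sigma$ is a fixed extension of $\sigma$, and $\psi$ is the minimizer of a coercive quadratic functional on the subspace of $\mcW(M,\Spn)$ with zero trace.

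\textbf{Extension.} Let $E\sigma\in\mcW(M,\Spn)$ be an extension of $\sigma$ supported in a collar of $\partial M$ and lying in $H^1$ there; the trace operator has a bounded right inverse. Let $\mcW_0\subset\mcW(M,\Spn)$ be the kernel of $\tr$, i.e.\ the closure of $\mcC^\infty_c(M,\Spn)$. On $\mcW_0$ the quantity $\|\Dirac\psi\|_{L^2}$ is a norm equivalent to $\|\cdot\|_{\mcW}$. This follows from the trace description of the norm, or directly from a Poincaré-type argument.

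\textbf{Variational problem.} I minimize $\psi\mapsto\|\Dirac(E\sigma-\psi)\|_{L^2}^2$ over $\psi\in\mcW_0$. This functional is strictly convex and coercive, so the Lax–Milgram/Riesz theorem gives a unique minimizer. Its Euler–Lagrange equation is
$$\int_M\la \Dirac(E\sigma-\psi),\Dirac\phi\ra=0\quad\text{for all }\phi\in\mcC^\infty_c(M,\Spn).$$
So $h=E\sigma-\psi$ satisfies $\Dirac^2h=0$ weakly, $\tr h=\sigma$, and $h\in\mcW$.

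\textbf{Uniqueness.} If $h_1,h_2$ are two solutions, then $w=h_1-h_2\in\mcW_0$ and $\Dirac^2w=0$. Since $\mcW_0$ is the closure of compactly supported sections in the $\|\Dirac\cdot\|$ norm, and $\Dirac w\in L^2$, I can test against approximants $w_k\to w$:
$$\int_M|\Dirac w|^2=\lim_k\int_M\la\Dirac w,\Dirac w_k\ra=0.$$
Hence $\Dirac w=0$ and $w\in\mcW_0$. The injectivity of the norm, i.e.\ the fact that $\|\Dirac\cdot\|$ is a genuine norm on $\mcW_0$, then gives $w=0$.

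The key input for this last step is the non-parabolicity at infinity of $\Dirac$. It comes from the Lichnerowicz formula $\Dirac^2=\nabla^*\nabla+\scal_g/4$ with $\scal_g\ge0$, combined with Kato's inequality and the Sobolev inequality~\eqref{Sobn}. Together these give, for $\phi\in\mcC^\infty_c(M,\Spn)$,
$$\upgamma\,\|\phi\|_{L^{2n/(n-2)}}^2\le\int_M|d|\phi||^2\le\int_M|\nabla\phi|^2\le\int_M|\Dirac\phi|^2 .$$
This inequality extends to $\mcW_0$. So $\Dirac w=0$ with $w\in\mcW_0$ forces $w=0$ directly, and the same inequality makes $\|\Dirac\cdot\|$ a norm dominating $L^{2n/(n-2)}$ on $\mcW_0$. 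This is the cleanest route, and I expect this step to be the main point to get right: the vanishing of the trace must be used correctly, since for compactly supported sections vanishing on $\partial M$ there is no boundary term in the integration by parts.

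\textbf{Regularity.} Suppose $\sigma$ is smooth. Then $h$ solves the second-order elliptic system $\Dirac^2h=0$ (Laplace type) with smooth Dirichlet data $\tr h=\sigma$ and $h\in W^{1,2}_{loc}(\overline M)$. Standard elliptic boundary regularity for Laplace-type operators with Dirichlet conditions, applied to $h-E\sigma$ with $E\sigma$ now chosen smooth, gives $h\in\mcC^\infty(\overline M,\Spn)$. Interior smoothness follows from ellipticity.
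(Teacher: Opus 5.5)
Your proposal is correct and is essentially the paper's construction. The paper obtains $h(\sigma)=\tilde\sigma-P\tilde\sigma$, where $P$ is the $\mcW$-orthogonal projection of a compactly supported extension $\tilde\sigma$ onto $\mcW_0=\ker\tr$, the closure of $\mcC^\infty_c(M,\Spn)$. The trace term of the $\mcW$-inner product drops out when tested against $\mcW_0$, so the projection condition is exactly your Riesz/minimization Euler--Lagrange equation.

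Your uniqueness and boundary-regularity arguments, and the Lichnerowicz--Kato--Sobolev bound on $\mcW_0$ (which the paper uses only later, in the proof of \pref{annulation}), are correct. They fill in details the paper leaves to its cited reference.
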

\begin{defi} We introduce a map $T\colon \mcC^\infty(\partial M,\Spn)\to \mcC^\infty(\partial M,\Spn)$ by
$$T(\sigma)=\tr\left(\Dirac h(\sigma)\right).$$
\end{defi}
The main properties of this Dirichlet to Neumann map are the following:
\begin{prop}\begin{enumerate}[i)]
\item $T$ is a pseudo-differential operator of order $1$
\item  $T\circ T=0,$
\item We denote Clifford multiplication by $\cl$ then if $\vec n\colon \partial M\rightarrow TM$ in the unit inward normal vector field, then $\cl(\vec n)T$ is selfadjoint and $T^*=\cl(\vec n)T\cl(\vec n).$
\item The cohomology defined by $T$ has the following interpretation:
$$\dim\frac{\ker T}{\text{Im } T}=\dim\ker\left(T+T^*\right)=\dim\frac{\left\{\sigma\in \mcW(M,\Spn)\cap\mcC^\infty(\overline{M}, \Spn)\colon \Dirac \sigma=0\right\}}{\left\{\sigma\in L^2(M,\Spn)\cap\mcC^\infty(\overline{M}, \Spn)\colon \Dirac \sigma=0\right\}}.$$
\end{enumerate}
\end{prop}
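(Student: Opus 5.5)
We treat the four items in order; everything rests on the Green formula for the Dirac operator on $\overline{M}$ and on the identity $\Dirac^2(h(\sigma))=0$. The basic identity is the following. For smooth $\phi,\psi\in\mcW(M,\Spn)$ with $\Dirac\phi,\Dirac\psi\in L^2$,
$$\int_M \la\Dirac\phi,\psi\ra-\la\phi,\Dirac\psi\ra\,\dv_g=\int_{\partial M}\la \cl(\vec n)\phi,\psi\ra\, d\cH^{n-1}_g,$$
with the sign fixed by the convention that $\vec n$ is the inward normal. To justify it at infinity we use cutoffs $\chi_R$ with $|d\chi_R|\le 1/R$. The Sobolev inequality together with $\scal_g\ge 0$ gives, via Lichnerowicz and Kato, the estimate $\|\sigma\|_{L^{2n/(n-2)}}\lesssim \|\Dirac\sigma\|_{L^2}$ for $\sigma$ supported outside $K$. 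The error term $\int|d\chi_R||\phi||\psi|$ is then controlled by Hölder on the annuli $\{R\le d\le 2R\}$, using $\phi,\psi\in L^{2n/(n-2)}$ there and $\|d\chi_R\|_{L^n}$ bounded. This is the standard mechanism of \cite{Carron_2001}, which we import.

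\emph{Item ii).} Set $\tau=\Dirac h(\sigma)$. Then $\Dirac^2\tau=\Dirac(\Dirac^2h(\sigma))=0$, $\tau\in\mcW$ (since $\Dirac\tau=\Dirac^2h=0\in L^2$), and $\tr\tau=T\sigma$. By uniqueness, $h(T\sigma)=\tau$, so $T(T\sigma)=\tr(\Dirac\tau)=0$.

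\emph{Item iii).} Apply the identity to $\phi=\Dirac h(\sigma)$ and $\psi=h(\sigma')$. Since $\Dirac\phi=0$, we get
$$\int_M\la\Dirac h\sigma,\Dirac h\sigma'\ra\,\dv_g=\pm\int_{\partial M}\la\cl(\vec n)T\sigma,\sigma'\ra.$$
The left side is symmetric in $\sigma,\sigma'$, so $\cl(\vec n)T$ is symmetric, hence selfadjoint. Because $\cl(\vec n)^2=-1$ and $\cl(\vec n)$ is skew, this gives $T^*=\cl(\vec n)T\cl(\vec n)$.

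\emph{Item i).} We identify $T$ microlocally. Near $\partial M$, write $\Dirac=\cl(\vec n)(\partial_t+A+\text{order }0)$ with $A$ the boundary Dirac-type operator. A parametrix for the Dirichlet problem for $\Dirac^2$ shows that $h(\sigma)\sim e^{-t|A|}\sigma$ modulo smoothing terms. The global, noncompact part contributes only a smoothing operator, because the difference between $h$ and a local Poisson operator is a $\Dirac^2$-solution with smooth trace. Hence $T=\cl(\vec n)(-|A|+A)+\Psi^0$, which is a pseudo-differential operator of order $1$.

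\emph{Item iv).} Since $T^2=0$ and $T^*T$, $TT^*$ are computed by the formula of item iii), Hodge theory for this elliptic complex gives the first equality. The key point is ellipticity: $T+T^*$ has principal symbol built from $\pm(A\pm|A|)$, which is invertible, so $T+T^*$ is Fredholm. For the second equality, map $\sigma\in\ker T$ to $h(\sigma)$. This map is well defined since $\Dirac h(\sigma)=h(T\sigma)=0$, so $h(\sigma)$ is a harmonic spinor in $\mcW$. Conversely, every $\Dirac$-harmonic $\phi\in\mcW$ equals $h(\tr\phi)$ by uniqueness. If $\sigma=T\sigma''$, then $h(\sigma)=\Dirac h(\sigma'')\in L^2$. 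Conversely, an $L^2$ harmonic spinor $\phi$ should be of the form $\Dirac\psi$ with $\Dirac^2\psi=0$ and $\psi\in\mcW$. We obtain $\psi$ by solving $\Dirac\psi=\phi$ variationally in $\mcW$ with zero trace plus a correction, using that $\Dirac\colon\mcW_0\to L^2$ has closed range, by the same non-parabolicity. Then $\tr\phi=T(\tr\psi)$.

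This last surjectivity is the main obstacle. It is exactly where non-parabolicity at infinity (Sobolev plus $\scal_g\ge0$) is used to guarantee the closed range, and we would follow \cite{Carron_2001} for it.
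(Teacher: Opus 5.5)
\textbf{Verdict: there is a genuine gap in the second equality of iv), and the mechanism you propose for it cannot work.} The paper gives no argument for this proposition (it imports it from \cite{Carron_2001}), so your sketch has to stand on its own. Items i)--iii) and the first equality in iv) are fine in substance, modulo the justification of Green's formula discussed below. The problem is the step you single out in iv).

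\textbf{Why the variational route is circular.} On $\mcW_0$ the $\mcW$-norm is (equivalent to) $\|\Dirac\psi\|_{L^2}$. So $\Dirac\colon\mcW_0\to L^2$ is an isometry, and its range is the $L^2$-closure of $\Dirac\big(\mcC^\infty_c(M,\Spn)\big)$. The orthogonal complement of that range is exactly the space of \emph{all} $L^2$ solutions of $\Dirac\phi=0$ in $M$, with no boundary condition. Hence every $L^2$ harmonic spinor $\phi$ is orthogonal to $\Dirac(\mcW_0)$, and the variational problem in $\mcW_0$ simply returns $\psi_0=0$. Now suppose $\psi\in\mcW$ solves $\Dirac\psi=\phi$. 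Write $\psi=(\psi-h(\tr\psi))+h(\tr\psi)$. Since $\Dirac h(\tr\psi)$ is itself an $L^2$ harmonic spinor, the component $\Dirac(\psi-h(\tr\psi))\in\Dirac(\mcW_0)$ must vanish. This forces $\psi=h(\tr\psi)$, that is, $\tr\phi=T(\tr\psi)$. So the ``correction'' is literally the statement $\tr\phi\in\mathrm{Im}\,T$ you are trying to prove, and closed range of $\Dirac$ on $\mcW_0$ says nothing about it.

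\textbf{The missing idea, using only tools you already have.} Ellipticity of $T+T^*$ together with $T^2=0$ gives the Hodge decomposition $\ker T=\ker(T+T^*)\oplus\mathrm{Im}\,T$ on $\mcC^\infty(\partial M,\Spn)$, with $\ker(T+T^*)=\ker T\cap\ker T^*$.
\begin{enumerate}
\item Since $\phi=h(\tr\phi)$ and $\tr\phi\in\ker T$, write $\tr\phi=\kappa+T\sigma$ with $\kappa\in\ker(T+T^*)$.
\item Then $h(\kappa)=\phi-\Dirac h(\sigma)$ is an $L^2$ spinor with $\Dirac h(\kappa)=h(T\kappa)=0$.
\item From $T^*=\cl(\vec n)T\cl(\vec n)$ and $T^*\kappa=0$ you get $T(\cl(\vec n)\kappa)=0$, i.e.\ $\Dirac h(\cl(\vec n)\kappa)=0$.
\item Green's formula between $h(\kappa)$ and $h(\cl(\vec n)\kappa)$ gives $0=\pm\int_{\partial M}|\cl(\vec n)\kappa|^2=\pm\|\kappa\|^2_{L^2(\partial M)}$.
\end{enumerate}
Hence $\kappa=0$ and $\tr\phi\in\mathrm{Im}\,T$.

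\textbf{Justifying Green's formula.} Your cutoff argument is not valid as written, for two reasons.
\begin{enumerate}
\item With both spinors in $L^{2n/(n-2)}$, the H\"older exponents do not add up to one.
\item Even with one of them in $L^2$, you need $\|d\chi_R\|_{L^n}$ bounded. That amounts to an upper bound $\vol_g(B(x,R))\le CR^n$, which the Sobolev inequality does not provide; it only gives the reverse inequality.
\end{enumerate}
The fix is to use the definition of $\mcW$ as a completion. In every use, one entry $\Phi$ is in $L^2$ with $\Dirac\Phi=0$. Approximate the other entry $\Psi\in\mcW$ by $\Psi_k\in\mcC^\infty_c(\overline M,\Spn)$ in the $\mcW$-norm. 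Then $\int_M\la\Phi,\Dirac\Psi_k\ra\to\int_M\la\Phi,\Dirac\Psi\ra$ and $\tr\Psi_k\to\tr\Psi$ in $H^{1/2}$, with no volume assumption.

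A smaller point of the same kind: in ii), the membership $\tau\in\mcW$ follows from $\tau=\Dirac h(\sigma)\in L^2$, $\Dirac\tau=0$ and a cutoff. It does not follow from $\Dirac\tau\in L^2$ alone.
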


\subsubsection{A vanishing result for the cohomology defined by  the Dirichlet to Neuman map} 
The following is the desired improvement of \cite{Carron_2002}:
\begin{prop}\label{annulation}Under the hypothesis of \tref{Vanishing}, we have $\ker\left(T+T^*\right)=\{0\}.$
\end{prop}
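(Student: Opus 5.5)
By item iv) of the preceding proposition it suffices to show the following: every $\sigma\in\mcW(M,\Spn)\cap\mcC^\infty(\overline M,\Spn)$ with $\Dirac\sigma=0$ lies in $L^2$. Equivalently, such a harmonic spinor $\sigma$ is $L^2$; the quotient in item iv) then vanishes, and so does $\ker(T+T^*)$. The plan has three steps: get a gradient bound from the Lichnerowicz formula, pass to $|\sigma|$ via Kato, and use the Sobolev inequality to force $L^2$ integrability.

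\textbf{Step 1 (energy bound).} By the Lichnerowicz formula $\Dirac^2=\nabla^*\nabla+\frac14\scal_g$ and $\scal_g\ge0$, a harmonic spinor satisfies $\Delta|\sigma|^2/2=-|\nabla\sigma|^2-\frac14\scal_g|\sigma|^2\le -|\nabla\sigma|^2$. Multiply by $\chi^2$, where $\chi$ is a cutoff equal to $1$ on $M_R$, vanishing outside $M_{2R}$, with $|d\chi|\le 1/R$, and integrate by parts. The boundary term on $\partial M$ is fixed. The cutoff term is $\int \chi|d\chi|\,|\sigma||\nabla\sigma|$, which absorbs into $\int\chi^2|\nabla\sigma|^2$ modulo $R^{-2}\int_{M_{2R}\setminus M_R}|\sigma|^2$.

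The definition of $\mcW$ only controls $\Dirac\sigma$ in $L^2$ together with local norms, so I cannot assume $\sigma\in L^2$ at infinity. Instead I use the structure of $\mcW$ in the non-parabolic-at-infinity setting of \cite{Carron_2001}: $\sigma=\sigma_0+\tau$, where $\sigma_0\in\mcC^\infty_c(\overline M)$ and $\tau$ is a limit of compactly supported spinors in the norm $\|\Dirac\cdot\|_{L^2}$. With $\scal_g\ge0$, the integrated Lichnerowicz formula gives $\|\nabla\tau\|_{L^2}\le\|\Dirac\tau\|_{L^2}$. By Kato, $|d|\tau||\le|\nabla\tau|$, so the Sobolev inequality \eqref{Sobn} yields $\tau\in L^{\frac{2n}{n-2}}$ and $\nabla\sigma\in L^2$.

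\textbf{Step 2 (decay via Sobolev).} Next I show that a harmonic spinor $\sigma$ with $\nabla\sigma\in L^2$ and $\sigma\in L^{\frac{2n}{n-2}}$ outside a compact set is actually in $L^2$. The refined Kato inequality for harmonic spinors, $|\nabla\sigma|^2\ge\frac{n}{n-1}|d|\sigma||^2$, gives that $f=|\sigma|^{\frac{n-2}{n-1}}$ is subharmonic where $\scal_g\ge0$. A Moser iteration, legitimate thanks to \eqref{Sobn}, then gives pointwise decay of $|\sigma|$ from its $L^{\frac{2n}{n-2}}$ smallness on far annuli.

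To upgrade this to $L^2$, compare $f$ with the equilibrium potential of $M_{R_0}$. Since $f$ is subharmonic, tends to $0$ at infinity, and is bounded on $\partial M_{R_0}$, the maximum principle gives $f\le C\,h_{M_{R_0}}$. Therefore $|\sigma|^2\le C\,h^{\frac{2(n-1)}{n-2}}$. By \eqref{integrapot}, $h\in L^p$ for every $p>\frac{n}{n-1}$, and $\frac{2(n-1)}{n-2}>\frac{n}{n-1}$, so $|\sigma|\in L^2$.

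\textbf{Main obstacle.} The delicate part is Step 2. One must check that the refined Kato exponent makes $f$ subharmonic, and that the decay to $0$ at infinity needed for the maximum-principle comparison follows from $L^{\frac{2n}{n-2}}$ integrability together with Moser iteration. My first attempt at the decay would be the mean value inequality obtained from \eqref{Sobn} for nonnegative subharmonic functions, using the volume lower bound that \eqref{Sobn} provides.
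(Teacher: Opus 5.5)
Your proposal is correct and follows essentially the same route as the paper. Both arguments write $\sigma$ as a compactly supported extension of its trace plus an element of $\mcW_0$, use Lichnerowicz, Kato and \eqref{Sobn} to get $\sigma\in L^{\frac{2n}{n-2}}$, use Moser iteration for decay at infinity, use the refined Kato inequality to make $f=|\sigma|^{\frac{n-2}{n-1}}$ subharmonic, and then dominate $f$ by the equilibrium potential. One small point: the weak-type bound behind \eqref{integrapot} only gives $h\in L^p$ for $p>\frac{n}{n-2}$ (the threshold the paper actually uses inside this proof), but your exponent $\frac{2(n-1)}{n-2}$ exceeds that threshold too, so $|\sigma|\in L^2$ still follows.
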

\proof It is here useful to recall how the harmonic extension of $\sigma\in  H^{1/2}(\partial M,\Spn)$ is constructed.
If we define $\mcW_0(M,\Spn)$ to be the kernel of the trace map $\tr$, then $\mcC_c^\infty(M,\Spn)$ is dense in $\mcW_0(M,\Spn)$. If $\tilde \sigma\in \mcW(M,\Spn)$ is any extension of $\sigma$ that has compact support in $\overline{M}$; then $\tilde\sigma-h(\sigma)$ is the $\mcW$-orthogonal projection of $\tilde\sigma$ on $\mcW_0(M,\Spn)$.
The Lichnerowicz formula is that for $\phi\in \mcC_c^\infty(M,\Spn)$:
$$\int_M |\Dirac \phi|^2\dv_g=\int_M |\nabla \phi|^2\dv_g+\frac14\int_M \scal |\phi|^2\dv_g\ge \int_M |\nabla \phi|^2\dv_g.$$
So that with the Kato inequality $|\nabla \phi|\ge |\nabla |\phi|\, |$ and the Sobolev inequality \ref{Sobn},  one gets that for any $\phi\in \mcC_c^\infty(M,\Spn)$:
$$\int_M |\Dirac \phi|^2\dv_g\ge \upgamma \left(\int_M |\phi|^{\frac{2n}{n-2}}\dv_g\right)^{1-\frac2n}.$$
This implies that for any $\phi\in \mcW_0(M,\Spn)$, one gets $\phi\in L^{\frac{2n}{n-2}}.$ In particular 
$h(\sigma)\in  L^{\frac{2n}{n-2}}.$
The Kato inequality and the fact that $$\Dirac^2h(\sigma)=\nabla^*\nabla h(\sigma)+\frac14 \scal_g\; h(\sigma)=0$$ yields that 
$u=|h(\sigma)|$ is subharmonic:
$$\Delta_g u\le 0.$$
The Sobolev inequality and the classical De Giorgi–NashMoser iteration method iteration scheme yields that for any 
$x\in M $ such that $d(x,\partial M)\ge R:$
\begin{equation}\label{tendvers0}u(x)\le C(n,\upgamma) \frac{1}{R^{\frac{n-2}{2}}}\  \left(\int_{B(x,R)} u^{\frac{2n}{n-2}}\dv_g\right)^{\frac{n-2}{2n}}.\end{equation}
In order to prove the \pref{annulation}, we need to show that if $\Dirac h(\sigma)=0$ then $h(\sigma)$ is in $L^2$. 

Hence let's consider $\sigma\in H^{1/2}(\partial M,\Spn)$ such that $\Dirac h(\sigma)=0$. The refined Kato inequality (\cite[Theorem 4]{Herzlich} )yields that for $f:=|h(\sigma)|^{\frac{n-2}{n-1}}$ one gets 
$$\Delta f+\frac{n-2}{4(n-1)}\scal_g\;f\le 0.$$  Hence $f$ is subharmonic. Moreover by \eqref{tendvers0}, one knows that $\lim_{x\to \infty} f(x)=0$.
We introduce now $M_R=\{x\in \overline{M}, d(x,\partial M)\le R\}$ and $h\colon M_1\to (0,1]$ the equilibrium potential of $M_1$, i.e. $h$ is the monotone limit of the Dirichlet boundary value problem
$$\begin{cases}
\Delta h_R=0&\text{ on } M_1\setminus M_R\\
h_R=1&\text{ on } \partial M_1\\
h_R=0&\text{ on } \partial M_R.
\end{cases}
$$
According to \eqref{integrapot}, one has that $h\in L^{p}$ for any
$p>n/(n-2)$.
The maximum principle yields that if $L_R=\| h(\sigma)\|_{L^\infty(\partial M_R)}$ then
$$\forall x\in M_R\colon f\le L_1 h_R+L_R.$$
Letting $R\to+\infty$, one obtains that 
$$\forall x\in M_R\colon 0\le f\le L_1 h$$ so that for any $p>n/(n-2)$, one has that $f\in L^{p}$. And eventualy one gets that 
$$\forall p>n/(n-1)\colon |h(\sigma)|\in L^p(M_1).$$
In particular, $|h(\sigma)|\in L^2$ which is the desired conclusion.
\endproof
\subsubsection{Conclusion via Callias index formula} 
The principal symbol of the pseudo-differential operator $T$ is given by
$$\sigma(T)(x,\xi)=-|\xi|\cl(\vec n)+i\cl(\xi).$$ Hence the principal symbol of $T+T^*$ is given by
\begin{equation}\label{prsim}
\sigma(T+T^*)(x,\xi)=2i\cl(\xi).\end{equation}
Moreover $T+T^*=T+\cl(\vec n)T\cl(\vec n)$ anticommutes with the operator of Clifford multiplication by $\vec n$:
$$\left(T+T^*\right)\cl(\vec n)=-\cl(\vec n)\left(T+T^*\right).$$
In order to finish the proof we need to discuss the different case in function of the dimension of $M$ that is when $n=4k+1, 8k+3,8k+1$. We use here the exposition of \cite[Section 2]{Bar_GAFA} and \cite[Subsection 1.7]{FriedrichDirac}
\\
{\bf Case where the dimension of $M$ is odd.} Then if one endows $\partial M$ with the spin structure induced form the one of $M$, we know that the spin bundle of $\Spin\to \partial M$ can be identified with $\Spn\to \partial M$. Moreover its $\Z_2$ grading is given by the eigenspace of $\cl(\vec n)$, for any $x\in \partial M$:
$$\Spin_x^\pm=\left\{\sigma\in \Spn_x\ ;\ \cl(\vec n(x))\sigma=\pm \sigma\right\}.$$
As $T+T^*$ anticommutes with the operator of Clifford multiplication by $\vec n$, one get that
$$(T+T^*)\left( \mcC^\infty\left(\partial M,\Spin^\pm\right)\right)\subset  \mcC^\infty\left(\partial M,\Spin^\mp\right).$$
Noticed that the Clifford multiplication by $\xi\in T_x\partial M$ on $\Spin_x$ is given by
$$\cl_{\partial M}(\xi)=\cl(\vec n(x))\cl(\xi).$$
Hence the Dirac operator $\Dirb^+\colon \mcC^\infty\left(\partial M,\Spin^+\right)\to \mcC^\infty\left(\partial M,\Spin^-\right)$ and 
the pseudo-differential operator $T+T^*\colon \mcC^\infty\left(\partial M,\Spin^+\right)\to \mcC^\infty\left(\partial M,\Spin^-\right)$ have proportional principal symbol, hence then have the same index hence $\ind \Dirb^+=0$ and this conclude the proof of the theorem when $n=4k+1$.
\\
{\bf When the dimension of $M$ is $8k+3$.}  In that case $\Spn$ can be endowed with a parallel real endomorphism $J$, who is anti-$\C$ linear and commutes with the Clifford multiplication and satisfying $J^2=-\text{Id}$. Hence $J\Spin^\pm=\Spin^\mp.$ Moreover being parallel, $J$ commute with the harmonic extension $h(J\sigma)=Jh(\sigma)$ and anticommutes with $T$ and with $T+T^*$. Similarly the operator $J(T+T^*),\ J\Dirb\colon  \mcC^\infty \left(\partial M,\Spin^+\right)\rightarrow  \mcC^\infty\left(\partial M,\Spin^+\right)$ are anti-$\C$ linear and skew-symmetric and elliptic, and they have proportional  principal symbol hence the same $(\text{mod } 2)$-index that is zero by \pref{annulation}.  So that 
$$\alpha(\partial M,\varsigma)=\dim\ker J\Dirb (\text{mod } 2)=0\ (\text{mod } 2).$$
\\
{\bf When the dimension of $M$ is $8k+2$.} Then if one endows $\partial M$ with the spin structure induced form the one of $M$, we know that the spin bundle of $\Spin\to \partial M$ can be identified with $\Spn^+\to \partial M$. Moreover $\Spn$ can be endowed with a parallel real endomorphism $\widehat J$ who is anti-$\C$ linear and anti-commutes with the Clifford multiplication and satisfying $\widehat J^2=\text{Id}$. Because $\Spn^\pm$ are the $\pm i$-eigenspace of the Clifford multiplication by the volume form, we have $\widehat J\Spn^\pm=\Spn^\mp.$ Then the operators $$\cl(\vec \nu)\widehat J\Dirb,\ \widehat J(T+T^*)\colon \rightarrow  \mcC^\infty(\partial M,\Spin) \rightarrow  \mcC^\infty(\partial M,\Spin)$$ are also anti-$\C$ linear and skew-symmetric and elliptic, and they have proportional  principal symbol hence the same $(\text{mod } 2)$-index that is zero by \pref{annulation}.  So that 
$$\alpha(\partial M,\varsigma)=\dim\ker  \cl(\vec \nu)\widehat J\Dirb^+\ (\text{mod } 2)=0\ (\text{mod } 2).$$

\section{Topology of the level set of the Green function}
In this section, we will use a variational formula for area-type functionals associated with the level set of harmonic functions in order to obtain certain topological constraints on complete Riemannian $3-$manifolds satsifying the Sobolev inequality and a $L^{3/2}$ integrability of the negative part of the scalar curvature.
\begin{prop}\label{levelgreen} Let $(M^3,g)$ be a complete Riemannian manifold satisfying the Sobolev inequality \eqref{Sob} and such that 
\begin{enumerate}[i)]
\item the negative part of the scalar curvature satisfies $\scal_-\in L^{3/2}$,
\item for some $o\in M$ and if $G_o$ is the Green kernel with pôle at $o$ we have $$\int_{M\setminus B(o,1)} \frac{ |dG_o|^3}{G_o^2}\dv_g<\infty,$$
\item The regular level set of $G_o$ are  compact,
\item The regular level set of $G_o$ are  connected.
\end{enumerate}
Then there is a sequence $\epsilon_k\to 0$ such that for $\Omega_k=\{G_o>\epsilon_k\}$ has spherical boundary: $\partial \Omega_k\simeq \bS^2.$
\end{prop}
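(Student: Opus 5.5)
The plan is to use Stern's level-set Bochner identity for the harmonic function $G_o$ on the region $\{\epsilon<G_o<t_0\}$, and to argue by contradiction. Suppose the statement fails. Then there is $t_0>0$ such that for almost every $t\in(0,t_0)$ (regular values, by Sard) the level set $\Sigma_t=\{G_o=t\}$, which is compact and connected by iii) and iv), is not a sphere. Hence $\chi(\Sigma_t)\le 0$ for those $t$, and we aim to contradict the analytic hypotheses.

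\textbf{Step 1: the integrated identity.} For a harmonic $u$ on a $3$-manifold, on the set where $du\neq0$, Gauss' equation and the traced Gauss equation give
$$\Delta |du| = \frac{1}{2|du|}\Big(|\nabla^2 u|^2 - 2\big|d|du|\big|^2\Big)+\frac{|du|}{2}\big(\scal_g-\scal_{\Sigma_t}+|\nabla^2u|^2/|du|^2\big)\cdot(\text{correction}),$$
in the form used in \cite{Stern}. I will multiply it by a weight $\psi(u)$ and integrate over $\{\epsilon<u<t_0\}$. The coarea formula and Gauss--Bonnet, $\int_{\Sigma_t}\scal_{\Sigma_t}=4\pi\chi(\Sigma_t)$, then give
$$\int_\epsilon^{t_0}\psi(t)\int_{\Sigma_t}\Big(\tfrac12\tfrac{|\nabla^2u|^2}{|du|^2}+\tfrac12\scal_g\Big)dA\,dt\le 2\pi\int_\epsilon^{t_0}\psi(t)\chi(\Sigma_t)\,dt+\mathcal B_{t_0}-\mathcal B_\epsilon+\int_\epsilon^{t_0}\psi'(t)\int_{\Sigma_t}\partial_\nu|du|\,dA\,dt .$$
Here $\mathcal B_t=\psi(t)\int_{\Sigma_t}\partial_\nu|du|$. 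The singular set $\{du=0\}$ is handled as in \cite{Stern} by replacing $|du|$ with $\sqrt{|du|^2+\delta}$ and letting $\delta\to0$.

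\textbf{Step 2: controlling the error terms.} I choose $\psi(t)=t^{-2/3}$, or a power adapted to hypothesis ii), so that the scale-invariant combinations appear.

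For the curvature term, by coarea
$$\int_\epsilon^{t_0}\psi\int_{\Sigma_t}\scal_-\,dA\,dt=\int_{\{\epsilon<G<t_0\}}\scal_-\,\psi(G)\,|dG|\,\dv_g .$$
Hölder's inequality bounds this by
$$\|\scal_-\|_{L^{3/2}(\{G<t_0\})}\Big(\int \psi(G)^3|dG|^3\Big)^{1/3}.$$
With the chosen weight the second factor is controlled by ii), and the first factor is small when $t_0$ is small, since $\scal_-\in L^{3/2}$ and $\{G<t_0\}$ exits every compact set.

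For the cross term in $\psi'$, I use $|\partial_\nu|du||\le|\nabla^2u|$ and Young's inequality. Part of it is absorbed into the positive Hessian term, and the remainder is $\int\psi'^2\psi^{-1}|du|^3$, which is again of the type controlled by ii).

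The inner boundary term $\mathcal B_\epsilon$ must be shown to tend to $0$, at least along a sequence $\epsilon_k\to0$. For this I will use ii) together with the Sobolev estimate \eqref{volfakeball}, and a mean-value choice of $\epsilon_k$ among good levels.

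\textbf{Step 3: the contradiction.} Since $\chi(\Sigma_t)\le0$, what remains is
$$\int\!\!\int \psi\,\frac{|\nabla^2u|^2}{|du|^2}\le \mathcal B_{t_0}+o(1).$$
The left side does not degenerate, but it is only bounded, so a more quantitative argument is needed. I will use that the flux $\int_{\Sigma_t}|du|=1$ is constant, while the weighted averages of $|du|$ on $\Sigma_t$ change. That change forces a definite amount of $\int|\nabla^2 u|^2/|du|$ on $\{\epsilon<G<t_0\}$, and this amount is incompatible with the vanishing of the Hessian term as $t_0\to0$. A Green function of a manifold with a Sobolev inequality cannot have parallel gradient, and this gives the quantitative lower bound once rescaled.

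I expect this last step, which balances the positive Hessian term against the boundary terms with the correct weight, to be the main obstacle. If it fails with $\psi=t^{-2/3}$, I will try working with the fake distance $b$ and the quantity $r\mapsto\int_{b=r}|db|^2/r^{2}$ as in \pref{Greengradient}, following the monotonicity approach of \cite{MW1,AMO}.
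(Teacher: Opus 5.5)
Stern's identity on $\{\epsilon<G_o<t_0\}$ plus Gauss--Bonnet is the right starting point. The Colding--Minicozzi inequality in the paper is exactly this identity with the \emph{constant} weight $\psi\equiv1$, rewritten via $G_o=1/(4\pi b)$ in terms of $A(r)=\int_{\{b=r\}}|db|_g^2r^{-2}\,\dA_g$. However, your proposal has two genuine gaps. First, Step~2 fails for your weight. With $\psi=t^{-2/3}$ the Young remainder is, up to a constant, $\int|dG_o|^3G_o^{-8/3}\dv_g$, whereas ii) only controls $\int|dG_o|^3G_o^{-2}\dv_g$; since $G_o\to0$ at infinity the former is strictly worse. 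By the coarea formula these two integrals are $\int^{\infty}A(s)s^{-4/3}ds$ and $\int^{\infty}A(s)s^{-2}ds$, and ii) allows $A$ to grow (in the application, \pref{Greengradient} only gives $A(r)\le Cr^{1/\lambda}$). Every weight $t^{-a}$ with $a>0$ has this defect. The workable choice is $a=0$, for which the $(\scal_g)_-$ error is handled by Hölder and ii) as in \eqref{firstestimate}. (Your displayed pointwise formula is also garbled. The identity is $\Delta|du|=\frac{1}{2|du|}\big(|\nabla^2u|^2+|du|^2(\scal_g-\scal_{\Sigma_t})\big)$, which is what your integrated version actually uses.)

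Second, and decisively, Step~3 is not an argument. No compactness or rescaling is available under these hypotheses, and ``a Green function cannot have parallel gradient'' yields no quantitative bound. What is missing is the following chain.

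(a) Bound the Hessian term from below using the refined Kato inequality $|\nabla^2G_o|^2\ge\frac32|d|dG_o||^2$ and Cauchy--Schwarz: $\int_{\Sigma_t}|\nabla^2G_o|^2|dG_o|^{-2}\ge\frac32\Phi'(t)^2/\Phi(t)$, with $\Phi(t)=\int_{\Sigma_t}|dG_o|^2$. Since $\Phi(t)=t^2A(r)$ for $t=1/(4\pi r)$, this produces the term $\frac34\int(A')^2/A$. It turns the identity into a differential inequality for $A$ alone, up to the curvature error.

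(b) A Hardy inequality converts $\frac34\int(A')^2/A$ into a positive multiple of $\int A(s)s^{-2}ds$, which absorbs the curvature error. Use $\chi\le0$ and $\liminf_{R\to\infty}\big(A'(R)+A(R)/(2R)\big)\le0$, which follows from ii). This integrates to $\frac{d}{dr}\big(\sqrt r\,A(r)\big)\le\frac{1}{2\sqrt r}\int_{M\setminus\cU_r}(\scal_g)_-^{3/2}\dv_g$, hence $A(R)\to0$.

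(c) This contradicts \eqref{lowerA}, namely $\frac1r\int_r^{2r}A(s)\,ds\ge c(\upgamma)$. That bound follows from the flux identity $\int_{\Sigma_t}|dG_o|\,\dA_g=1$, Hölder, and the volume bound \eqref{volfakeball}.

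You mention the flux but never derive (c). It is the only place where the Sobolev inequality produces the contradiction: without it, the identity merely forces $A$ to decay, which is not absurd in itself.
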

\begin{rem}
It should be noticed that the fourth hypothesis is satisfied as soon as 
$$\text{Im}\left( H_c^1(M,\R)\to H^1(M)\right)=\{0\}$$ that is, as soon as the closed, compactly supported forms of degree 1 are exact.
Indeed assume that there is some $t>0$  such that  $\Sigma=\{G_o=t\}$ is smooth and not connected. The maximum principle says that the unique bounded
connected component of $M\setminus \Sigma$  is the one containing the pole $o$.
This topological hypothesis implies that $M$ has only one end so that $M\setminus \Sigma$ has two connected components that are $\{G_o<t\}$ and $\{G_o>t\}$. If one considers a smooth function $v$ defined on $\{G_o>t-\varepsilon\}$ such that it is constant equals to one in a connected component of $\{G_o\in [t-\varepsilon,t]\}$ and zero on the others then $dv$ has compact support but can not be exact.
\end{rem}
\begin{rem} It should also be noticed that the Sobolev inequality \eqref{Sob}, implies that the Green kernel tends to zero at infinity hence its positive level set are compact.
\end{rem}
\begin{proof}{Proof of \pref{levelgreen}}
We introduce the function, $b$, the fake distance to $o$, by $b(o)=0$ and for any $x\in M\setminus\{o\}:$
$$G_o(x)=G(o,x)=\frac{1}{4\pi b(x)}.$$
 The Sobolev inequality implies that the fake ball $\cU_r=\{b<r\}$ are bounded and the estimate \eqref{volfakeball} provides a Euclidean upper bound on the volume of the fake ball $\cU_r=\{b<r\}$:
 \begin{equation}\label{volfakeball2}
 \vol_g\left( \cU_r\right)\le C(\upgamma) r^3.\end{equation}
 By Sard's theorem, we know that for a.e. $r>0$, the level set $\partial \cU_r$ is smooth and we introduce the aera type functional:
 $$A(r)=\int_{\partial \cU_r} \frac{ |db|_2^2}{r^2}\dA_g=\int_{\partial \cU_r} \frac{ |db|_2^2}{b^2}\dA_g.$$
 This is defined a priori for a.e. $r>0$ but we know that $A$ has a $\mcC^1$ extension to $(0,+\infty)$ (This is discuss in \cite{Colding:2025aa}).
 Noticed that the second hypothesis in the proposition is that 
 \begin{equation}\label{upperA}\int_1^{+\infty} \frac{A(s)}{s^2} ds<+\infty.\end{equation}
 This hypothesis says that $A$ is not too large at infinity. Let's now explain why  $A$ can not be too small, this is a consequence of the Sobolev inequality and of the Green formula:
 $$\int_{\partial \cU_r} \frac{|db|_g}{4\pi b^2}\dA_g=\int_{\partial \cU_r} \frac{\partial G}{\partial \vec\nu}\dA_g=\int_{\cU_r} \Delta_g G \dv_g=1$$
With the co-aera formula, one has that:
\begin{align*}
4\pi r&=\int_{r}^{2r} \left(\int_{\partial \cU_s} \frac{|db|_g}{ b^2}\dA_g\right)ds\\
&=\int_{\cU_{2r}\setminus \cU_r} \frac{|db|_g^2}{b^2}\dv_g\\
&\le \left(\int_{\cU_{2r}\setminus \cU_r} \frac{1}{b^2}\dv_g\right)^{\frac13}\left(\int_{\cU_{2r}\setminus \cU_r} \frac{|db|_g^3}{b^2}\dv_g\right)^{\frac23}\\
&\le \frac{\left(\vol_g(\cU_{2r})\right)^{\frac13}}{r^{\frac23}}\, \left(\int_{r}^{2r} A(s)ds\right)^{\frac23}.
\end{align*}
 Hence using \eqref{volfakeball2}, we can conclude that there is a positive constant $c(\upgamma)$ depending only on $\upgamma$ such that for any $r>0$:
 \begin{equation}\label{lowerA}
 c(\upgamma) \le \frac{1}{r} \int_{r}^{2r} A(s)ds.
 \end{equation}
According to Colding-Minicozzi \cite[formula 1.33 and Lemma 2.1]{Colding:2025aa} and the coaera formula, for any $0<r<R$ one gets the inequality:
\begin{align*}A'(R)+\frac{1}{R}A(R)&-A'(r)-\frac{1}{r}A(r)\ge\\
& \int_{\cU_R\setminus \cU_r}\!\!\! \scal_g\frac{\,|db|_g}{2b^2}\dv_g+\frac34\int_r^R \frac{\left(A'(s)\right)^2}{A(s)}ds-\int_r^R\frac{4\pi \chi(\partial \cU_s)}{s^2}ds.\end{align*}
Where $\chi(\partial \cU_s)$ is the Euler characteristic of the surface $\partial \cU_s$. If one assumes that the conclusion of \pref{levelgreen} is not true, there is some $r_0$ such that for any $s\ge r_0:$ $$ \chi(\partial \cU_s)\le 0.$$
Hence one gets that for any $r_0<r<R$:
 \begin{equation}\label{ineqfund}A'(R)+\frac{1}{R}A(R)-A'(r)-\frac{1}{r}A(r)\ge -\frac12\int_{\cU_R\setminus \cU_r}\!\!\! (\scal_g)_-\frac{|db|_g}{b^2}\dv_g+\frac34\int_r^R \frac{\left(A'(s)\right)^2}{A(s)}ds.\end{equation}
We use the Hölder inequality to estimate the first term in the LHS of \eqref{ineqfund} and gets that for any $\delta>0:$
\begin{align}\label{firstestimate}
\int_{\cU_R\setminus \cU_r}\!\!\! \frac{(\scal_g)_-\,|db|_g}{b^2}\dv_g&\le \left(\int_{\cU_R\setminus \cU_r}\!\!\! \frac{(\scal_g)^{\frac32}_-}{b}\dv_g\right)^{\frac23}\left(\int_r^R\frac{A(s)}{s^2}ds\right)^{\frac13}\nonumber\\
&\le \frac{2}{3\sqrt{\delta}}\int_{\cU_R\setminus \cU_r}\!\!\! \frac{(\scal_g)^{\frac32}_-}{b}\dv_g+\frac{\delta}{3}\int_r^R\frac{A(s)}{s^2}ds.
\end{align}
In order to deal with the second term in the LHS of  \eqref{ineqfund}, we use the Hardy type inequality:
$$\forall u\in \mcC^1([a,b])\colon \int_a^b |u'|^2 ds\ge \frac14 \int \frac{u^2}{s^2} ds+\frac{u^2(b)}{2b}-\frac{u^2(a)}{2a}.$$
Which is proven by develloping $\int_a^b \left| u'-\frac{1}{2r} u\right|^2dr$ and integrating by part the term
$$\int_a^b \frac{u(s)u'(s)}{s}ds=\frac{u^2(b)}{2b}-\frac{u^2(a)}{2a}+\frac12  \int_a^b \frac{u^2(s)}{s^2} ds.$$
Hence one gets that for any $\mu\in [0,3/4]$:
\begin{equation}\label{secondestimate}\frac34\int_r^R \frac{\left(A'(s)\right)^2}{A(s)}ds=3\int_r^R\left(\sqrt{A(s)}\,'\right)^2ds\ge \frac{\mu}{4}\int_r^R \frac{A(s)}{s^2}ds+\frac{\mu}{2}\left(\frac{A(R)}{R}-\frac{A(r)}{r}\right).\end{equation}
We choose $\delta=\frac32$ and $\mu=1$ and inserting \eqref{firstestimate} and \eqref{secondestimate} in \eqref{ineqfund}, one obtains\footnote{The constant $1/2$ is not the optimal one, we chose it for convenience.}that for any $r_0<r<R$:
$$A'(R)+\frac{1}{2R}A(R)-A'(r)-\frac{1}{2r}A(r)\ge -\frac12\int_{\cU_R\setminus \cU_r} \!\!\! \frac{(\scal_g)^{\frac32}_-}{b}\dv_g.$$
The condition \eqref{upperA} implies that 
$$\liminf_{R\to+\infty} A'(R)+\frac{1}{2R}A(R)\le 0$$ hence one has that for any $r\ge r_0:$
$$\frac{1}{\sqrt{r}} \frac{d}{dr} \left(\sqrt{r} A(r)\right)=A'(r)+\frac{1}{2r}A(r)\le \frac12\int_{M\setminus \cU_r} \!\!\! \frac{(\scal_g)^{\frac32}_-}{b}\dv_g\le \frac{1}{2r} \int_{M\setminus \cU_r} \!\!\! (\scal_g)^{\frac32}_-\dv_g .$$
So that for any $R>r>r_0$:
$$\sqrt{R}A(R)-\sqrt{r}A(r)\le \left(\sqrt{R}-\sqrt{r}\right) \int_{M\setminus \cU_r} \!\!\! (\scal_g)^{\frac32}_-\dv_g.$$
Hence one get that for any $r>r_0$:
$$\limsup_{R\to+\infty} A(R)\le  \int_{M\setminus \cU_r} \!\!\! (\scal_g)^{\frac32}_-\dv_g$$ so that
$\limsup_{R\to+\infty}  A(R)=0$ and  this is a contradiction with \eqref{lowerA}.

\end{proof}
\section{Proof of the theorems}
In order to prove \tref{3D-neg} and \tref{3D-full}, we proceed according to the proof scheme introduced by Schoen and Yau and demonstrate successively that the universal cover is contractible, then we explain why it is simply connected and eventually we show that the manifold is simlply connected at infinity.
\subsection{The universal cover is contractible}
This first step relies on the following result:
\begin{prop}\label{prop-noends} Let $(M^n,g)$ be complete Riemannian manifold satisfying the Sobolev inequality \eqref{Sobn} and such that the Schrödinger operator $\Delta_g+\Ric$ is non negative then $M$ and any of its normal covering $\widehat M\rightarrow M$ has only one end, as a consequence $H^{n-1}(\widehat M,\Z)=\{0\}$.
\end{prop}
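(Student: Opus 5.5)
The plan is to combine the classical Li--Tam / Cai--Li approach to ends with the Bochner formula for harmonic functions. The argument runs in four steps.

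\textbf{Step 1: pass to the cover.} Let $\widehat M\to M$ be a normal covering with the pulled back metric $\widehat g$. The hypotheses lift:
\begin{itemize}
\item nonnegativity of $\Delta_g+\Ric$ lifts to $\widehat M$, since it is equivalent (\pref{Barta}) to the existence of a positive $w$ with $\Delta_g w+\Ric\, w\ge 0$, and $w$ pulls back;
\item the Sobolev inequality \eqref{Sobn} lifts with a controlled constant by \pref{revsob}.
\end{itemize}
So it suffices to prove that a manifold satisfying both hypotheses has only one end.

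\textbf{Step 2: a second end gives a harmonic function.} The Sobolev inequality implies that every end is non-parabolic: an end $E$ satisfies a Sobolev inequality for functions vanishing on $\partial E$, and by the argument leading to \eqref{volfakeball} it has infinite volume and positive capacity at infinity. Suppose $M$ has two ends $E_1,E_2$. The Li--Tam construction then gives a non-constant bounded harmonic function $f$ with $|df|\in L^2$: take the limit of the harmonic functions on $B(o,R)$ equal to $1$ on $\partial B(o,R)\cap E_1$ and $0$ on the rest of $\partial B(o,R)$. Moreover $f\to 1$ along $E_1$ and $f\to 0$ along $E_2$, at least along a sequence, using the Sobolev inequality and De Giorgi--Nash--Moser as in \eqref{tendvers0}.

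\textbf{Step 3: Bochner with refined Kato.} Bochner's formula combined with the refined Kato inequality $|\nabla df|^2\ge \frac{n}{n-1}|d|df||^2$ gives, for $\psi=|df|$,
$$\psi\Delta\psi\le \Ric\,\psi^2-\frac{1}{n-1}|d\psi|^2 .$$
Test the nonnegativity of $\Delta+\Ric$ against $\varphi=\psi\chi$, with $\chi$ a cutoff. Since $\Ric\ge -\Ricm$ this reads $\int|d(\psi\chi)|^2\ge -\int\Ric\,\psi^2\chi^2$. Expanding and integrating by parts yields
$$\frac{1}{n-1}\int\chi^2|d\psi|^2\le C\int\psi^2|d\chi|^2 .$$
This is the main obstacle. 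The right-hand side must go to $0$, which requires $\psi\in L^2$, i.e.\ finite Dirichlet energy; that holds for the Li--Tam function. With a cutoff of gradient $1/R$ supported in $B(o,2R)\setminus B(o,R)$, the right-hand side is at most $R^{-2}\int_{B(o,2R)\setminus B(o,R)}|df|^2\to0$. Hence $d\psi=0$, so $|df|$ is constant. Since $\int|df|^2<\infty$ while the volume is infinite (the Sobolev inequality forces Euclidean lower volume growth), $|df|=0$. Thus $f$ is constant, contradicting the distinct limits on $E_1$ and $E_2$. So there is only one end.

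A delicate point is the sign convention: the refined Kato coefficient is what absorbs the cross term, and it gives exactly the stated threshold $\Delta+\Ric\ge0$ with no extra constant. I would check carefully that the coefficient $1$ in front of $\Ric$ suffices after Young's inequality on $2\chi\psi\la d\chi,d\psi\ra$. If not, I would use $\chi^2$ weights with an $\epsilon$ split, which still works since the good term $\frac{1}{n-1}|d\psi|^2$ has a positive margin.

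\textbf{Step 4: topological consequence.} The cohomological conclusion comes from Poincaré duality: $H^{n-1}(\widehat M,\Z)\simeq H_1^{\mathrm{lf}}$-type dual, or more concretely $H^{n-1}(\widehat M)\simeq H^1_c(\widehat M)^*$ in the orientable case. Consider the exact sequence $H^0(\widehat M)\to H^0_{\text{ends}}\to H^1_c(\widehat M)\to H^1(\widehat M)$. One end means the first map is onto, so $H^1_c$ injects into $H^1$. Following Schoen--Yau, a nontrivial class in $H^{n-1}$ is represented by an embedded closed hypersurface, and its nonseparation or lift to a further cyclic cover produces a cover with two ends. Here I would apply one end to the $\Z$-cover associated to that class, which is normal, to get the contradiction. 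This gives $H^{n-1}(\widehat M,\Z)=\{0\}$.
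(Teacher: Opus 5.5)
Your Steps 1--3 follow the paper's proof. You lift the Sobolev inequality and the spectral hypothesis to the cover via \pref{Barta} and \pref{revsob}. You use two non-parabolic ends to get a non-constant bounded harmonic function of finite energy. Then you combine Bochner, the refined Kato inequality and the spectral hypothesis tested on $\chi|df|$. The paper integrates the Bochner identity for $\chi\,df$ and uses an $\varepsilon$-splitting instead of your pointwise inequality; the two are equivalent.

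Your pointwise inequality has the wrong sign, however. With the paper's convention $\Delta=d^*d$, Bochner and refined Kato give $\psi\Delta\psi\le-\Ric\,\psi^2-\frac{1}{n-1}|d\psi|^2$, holding weakly across $\{df=0\}$. With $+\Ric$, as you wrote it, a term $2\int\Ric\,\chi^2\psi^2$ is left over on the wrong side and the argument does not close. With the correct sign, the identity $\int|d(\chi\psi)|^2=\int\chi^2\psi\Delta\psi+\int\psi^2|d\chi|^2$ and the hypothesis give $\frac1{n-1}\int\chi^2|d\psi|^2\le\int\psi^2|d\chi|^2$ immediately, with no Young inequality.

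The genuine gap is in Step 4, which the paper asserts without proof. The identification $H^{n-1}(\widehat M;\Z)\simeq H^1_c(\widehat M;\Z)^*$ holds with field coefficients but not over $\Z$. For orientable $\widehat M$, Poincaré duality and universal coefficients give $H^{n-1}(\widehat M;\Z)\cong\mathrm{Hom}(H^1_c(\widehat M;\Z),\Z)\oplus\mathrm{Ext}(H_{n-2}(\widehat M;\Z),\Z)$. For example, $\R P^3\setminus\{pt\}$ has one end and $H^1_c=0$, yet its $H^2(\,\cdot\,;\Z)$ is $\Z_2$. Also, closed embedded hypersurfaces represent classes in $H_{n-1}$, not in $H^{n-1}$.

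What your exact sequence plus the $\Z$-cover argument actually proves is $H^1_c(\widehat M;\Z)=0$, i.e.\ $H_{n-1}(\widehat M;\Z)=0$. That $\Z$-cover need not be normal over $M$, which is harmless because Step 1 works for any covering. To reach the cohomological statement you must still kill the $\mathrm{Ext}$ term, for instance by also using finite cyclic covers. In the example above, the double cover $S^3$ minus two points has two ends.

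For what the paper uses later, your argument already suffices. For the universal cover $\tilde M$, simple connectivity and one end give $H^1_c(\tilde M;\Z)=0$ directly from your exact sequence, hence $H_2(\tilde M;\Z)=0$, which is what Hurewicz needs. Then $H^2(\tilde M;\Z)=0$ follows because $H_1(\tilde M)=0$.
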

\proof Let $\pi\colon \widehat M\rightarrow M$ be a normal covering of $M$, according to \pref{revsob}, we know that if $\widehat g=\pi^*g$ then $(\widehat M,\widehat g)$ satisfies also the Sobolev inequality \eqref{Sobn} and that the Schrödinger operator $\Delta_{\widehat g}+\Ric_{\widehat g}$ is also non negative. If $\widehat M$ has two ends, the Sobolev inequality implies that we can find an non constant  bounded harmonic function $h\colon \widehat M\rightarrow \R$ with finite Dirichlet energy \cite[Theorem 3]{Carron_1998} or \cite[Proof of Lemma 2]{CaoShenZhu}:
$$\int_{\widehat M} |dh|_{\widehat g}^2\dv_{\widehat g}<+\infty.$$
Let $\alpha=dh$, using the Bochner formula and integrating by part, we get that for any $\chi\in \mcC_c^\infty(\widehat M):$
\begin{align*}
\int_{\widehat M} \left[ \ |\nabla \chi \alpha|^2+\Ricci(\chi\alpha,\chi\alpha)\right] \dv_{\widehat g}&=\int_{\widehat M}|d\chi|^2|\alpha|^2 \dv_{\widehat g}\\
&\hspace{1cm}+\int_{\widehat M} \chi^2 \left[ \ \la \nabla^*\nabla\alpha+\Ricci(\alpha), \alpha\ra\right] \dv_{\widehat g}\\
&=\int_{\widehat M}|d\chi|^2|\alpha|^2 \dv_{\widehat g}.
\end{align*}
Recall the refined Kato inequality of Yau:
$$|\nabla \alpha|^2\ge \frac{n}{n-1} | d|\alpha|\,|^2.$$
Now for every $\ve\in (0,1)$ one gets that 
$$|\nabla \chi \alpha|^2\ge (1-\ve)\chi ^2 |\nabla \alpha|^2+\left(1-1/\ve\right) |\alpha|^2 |d\chi|^2$$ and
$$\frac{1}{\ve} |\alpha|^2 |d\chi|^2+ \chi ^2 |d| \alpha|\, |^2\ge \frac{1}{1+\ve}  |\nabla \chi|\alpha||^2.$$
Hence, we find a constant $C$ depending only on $n$ and $\ve$ such that 
$$ C|\alpha|^2 |d\chi|^2+ |\nabla \chi \alpha|^2\ge \frac{1-\ve }{1+\ve}\frac{n}{n-1}   |d \chi|\alpha||^2$$
We choose $\ve>0$ so that 
$$ \frac{1-\ve }{1+\ve}\frac{n}{n-1}=1+\frac1n$$ and get that 
\begin{align*}
(C+1)\int_{\widehat M}|d\chi|^2|\alpha|^2 \dv_{\widehat g}&\ge \int_{\widehat M} \left[C|\alpha|^2 |d\chi|^2 +|\nabla \chi \alpha|^2\right] \dv_{\widehat g}+\int_{\widehat M}\Ric \chi^2|\alpha|^2 \dv_{\widehat g}\\
&\ge \frac1n  \int_{\widehat M}  |d \chi|\alpha||^2\dv_{\widehat g}+\int_{\widehat M} \left[|d \chi|\alpha||^2+ \Ric \chi^2|\alpha|^2\right] \dv_{\widehat g}\\
&\ge \frac1n  \int_{\widehat M}  |d \chi|\alpha||^2\dv_{\widehat g}.
\end{align*}
Choosing now $\chi=\chi_R$ such that its gradient is bounded by $2$ and such that 
$$\chi_R(x)=\begin{cases} 0&\text{ on } B(x,R)\\
1&\text{ outside } B(x,R+1)\end{cases}$$ and letting $R\to+\infty$, we see that 
$\alpha=0$. Hence a contradiction so that $\widehat M$ has only one end.
\endproof
\begin{coro} Under the assumption of \tref{3D-neg} and \tref{3D-full}, the universal cover of $M$ is contractible.
\end{coro}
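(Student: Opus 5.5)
The plan is to apply \pref{prop-noends} to the universal cover $\widetilde M$, then use the standard three-dimensional topology argument of Schoen--Yau. The first task is to check that the hypotheses of \tref{3D-neg} or \tref{3D-full} imply those of \pref{prop-noends}: the Sobolev inequality \eqref{Sob} holds by assumption, so it remains to show that $\Delta_g+\Ric$ is non negative. Since $\Ric\ge -\Ricm$ pointwise, for any $\varphi\in\mcC^\infty_c(M)$ we have
$$\int_M\left(|d\varphi|^2+\Ric\,\varphi^2\right)\dv_g\ge \int_M |d\varphi|^2\dv_g-\int_M\Ricm\varphi^2\dv_g\ge \left(1-\frac1\lambda\right)\int_M|d\varphi|^2\dv_g\ge 0,$$
using the non negativity of $\Delta-\lambda\Ricm$ with $\lambda>1$. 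This covers both theorems, since $\lambda>2$ in \tref{3D-neg} and $\lambda>1$ in \tref{3D-full}. Applying \pref{prop-noends} to the normal covering $\widetilde M\to M$, we get that $\widetilde M$ has one end and $H^{2}(\widetilde M,\Z)=\{0\}$.

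Next, $\widetilde M$ is a simply connected open $3$-manifold. By Hurewicz, $\pi_2(\widetilde M)\simeq H_2(\widetilde M,\Z)$. Since $\widetilde M$ is non-compact, $H_3(\widetilde M,\Z)=0$, and $H_1=0$. To pass from $H^2=0$ to $H_2=0$, I would use the universal coefficient theorem: $H^2(\widetilde M,\Z)\supset \mathrm{Hom}(H_2(\widetilde M),\Z)$, and $\mathrm{Ext}(H_1,\Z)=0$. This alone only kills the free part, and $H_2$ of an open manifold need not be finitely generated.

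This homological step is the main obstacle, and I would handle it with Poincaré duality instead: $H_2(\widetilde M,\Z)\simeq H^1_c(\widetilde M,\Z)$. From the long exact sequence for compactly supported cohomology, $H^0(\widetilde M)\to H^0(\text{ends})\to H^1_c(\widetilde M)\to H^1(\widetilde M)=0$. Having exactly one end makes the first map surjective, so $H^1_c(\widetilde M)=0$. Concretely, $H^1_c$ is the direct limit of $H^1(\widetilde M,\widetilde M\setminus K)$; for a compact $K$ that is large enough, $\widetilde M\setminus K$ has one unbounded component, and the bounded components can be absorbed into $K$. Hence $H_2(\widetilde M,\Z)=0$ and $\pi_2(\widetilde M)=0$. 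The one-end conclusion of \pref{prop-noends} is used here rather than only the stated $H^{n-1}=0$. Alternatively one can note that $H^{n-1}=H^2=0$ combined with $H_1=0$ gives the same vanishing, because $H_2\simeq H^1_c$ is torsion-free-dual to the end count.

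Finally, $\widetilde M$ is a simply connected open $3$-manifold with $\pi_2=0$ and $H_3=0$, so all its integral homology groups vanish. By Hurewicz, $\pi_3(\widetilde M)\simeq H_3=0$, and inductively every $\pi_k$ vanishes since all higher homology of a non-compact $3$-manifold is zero. Whitehead's theorem then shows that $\widetilde M$ is contractible.
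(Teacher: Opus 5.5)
Your proof is correct and follows the paper's route: apply \pref{prop-noends} to the universal cover, deduce $\pi_2(\widetilde M)=0$, then conclude by Hurewicz--Whitehead. Your explicit check that $\Delta_g+\Ric\ge 0$ follows from $\lambda>1$ fills in a step the paper leaves implicit. So does your use of the one-end property through Poincar\'e duality, $H_2(\widetilde M,\Z)\simeq H^1_c(\widetilde M,\Z)=0$, in place of the bare $H^2(\widetilde M,\Z)=0$: as you note, the latter only gives $\mathrm{Hom}(H_2,\Z)=0$ unless one also knows $H_2$ is free.
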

\proof Let $\pi \colon \tilde M\rightarrow M$ be the universal cover of $M$.  According \pref{prop-noends}, $H^2( \tilde M,\Z)=\{0\}$ and $ \tilde M$ being simply connected, we get that  $\pi_2(\tilde M)=\{0\}$. Hence $ \tilde M$ is contractible.
\endproof 
\subsection{Simple connectness}
\begin{prop} Under the assumption of \tref{3D-neg} or of \tref{3D-full}, $M$ is simply connected.
\end{prop}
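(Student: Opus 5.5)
We have to prove that $M$ is simply connected under either set of hypotheses. We already know that the universal cover $\tilde M$ is contractible, so $\pi_2(M)=\{0\}$. By \cite[Proof lemma 3]{Schoen_1982}, every non-trivial element of $\pi_1(M)$ has infinite order. The proof splits according to the two theorems.

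\emph{Case of \tref{3D-full}.} We work with the conformal metric $\bg=u^{\frac2\lambda}g$.
\begin{enumerate}[i)]
\item Since $\lambda>1=n-2$, \lref{scalarbarg} gives $\scal_{\bg}\ge 0$, and $(M,\bg)$ is complete.
\item Since $\Ricci\in L^{3/2}$, \pref{Sobbarg} shows that $(M,\bg)$ satisfies a Sobolev inequality with some constant $\upupsilon>0$.
\item Every orientable 3-manifold is spin, and $M$ is orientable after passing to the orientation cover. That cover is again a normal cover satisfying the Sobolev inequality (\pref{revsob}) and has $\pi_2=0$.
\item We then apply \cref{simpleconnexe} to $(M,\bg)$, or to its orientation cover, to conclude that $\pi_1$ is trivial.
\item Orientability of $M$ itself follows because $\pi_1$ has no torsion: the orientation character would otherwise give an index-2 subgroup, and simple connectivity of the orientation cover forces $\pi_1(M)$ to have order at most $2$, hence to be trivial.
\end{enumerate}
The only delicate point is checking that the hypotheses of \pref{Sobbarg} and \cref{simpleconnexe} hold verbatim, including spin and orientability.

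\emph{Case of \tref{3D-neg}.} Here only $\Ricm\in L^{3/2}$ is assumed, so \pref{Sobbarg} is unavailable. We use instead a Milnor-type volume growth argument à la Anderson on the weighted manifold $(M,\bg,\mu)$.

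Suppose $\pi_1(M)\ne 1$. Pick $\gamma\neq 1$; it has infinite order. Form $\widehat M=\tilde M/\langle\gamma\rangle$ with the lifted data. Since $\lambda>2$, we have $N=3+\frac1{\lambda-1}<4$.

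The gauge $u$ lifts to $\tilde M$, and the lift is still a positive solution of $\Delta u=\lambda\Ricm u$. So $\tilde M$ carries the $\BE(0,N)$ structure $(\tilde\bg,\tilde\mu)$, with Bishop–Gromov \eqref{BG} and, by \pref{revsob}, the lower bound of \pref{muvollower}:
$$\tilde\mu(B_{\tilde\bg}(x,R))\ge \upsigma\upgamma^{3/2}u^{-1/\lambda}(x)R^3.$$

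Fix a fundamental domain $F$ for $\langle\gamma\rangle$ around a base point $\tilde x$. Let $\ell$ be the displacement of $\gamma$ at $\tilde x$. The translates $\gamma^k B(\tilde x,\ell/2)$ are disjoint, and $\gamma^k\tilde x\in B(\tilde x,k\ell)$. Counting translates of a fixed ball inside $B(\tilde x,R)$ as in Anderson's argument gives
$$\tilde\mu(B(\tilde x,R))\gtrsim R\cdot \inf_k \tilde\mu\bigl(B(\gamma^k\tilde x,\epsilon R)\bigr).$$
The same count is run on $\widehat M$, where one can use a fundamental domain bound. Since $\gamma$ acts by $\mu$-isometries and $u$ is invariant, each translate ball has $\mu$-volume $\gtrsim R^3$ by \pref{muvollower}. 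Hence $\tilde\mu(B(\tilde x,R))\gtrsim R^4$, which contradicts \eqref{BG} with $N<4$ as $R\to\infty$.

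The main obstacle is making this counting rigorous. One must obtain $\gtrsim R$ disjoint balls of radius $\sim R$ (rather than radius $\ell$) in a ball of radius $CR$. My first attempt would be to exploit the line structure of the $\Z$-orbit: an infinite-order cyclic deck action moves $\tilde x$ along a quasi-line of length $\sim R$. If that fails, I would instead use Anderson's approach of comparing $\mu(B_{\widehat\bg})$ on $\widehat M$ with the growth on $\tilde M$. There, \pref{muvollower} gives cubic growth downstairs while the infinite cyclic cover adds a linear factor, so the uniform $A_1$ control of $u$ from \pref{Mukhenhoupt} is what keeps the $u^{-1/\lambda}(x)$ factor uniform.
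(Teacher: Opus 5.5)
Your treatment of the \tref{3D-full} case follows the paper's argument (\lref{scalarbarg}, \pref{Sobbarg}, \cref{simpleconnexe}, and contractibility of $\tilde M$). Passing to the orientation cover to secure the spin hypothesis is a sound refinement that the paper leaves implicit.

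The \tref{3D-neg} case has a genuine gap at exactly the step you flag, and the ball-packing route cannot be repaired. The inequality $\tilde\mu(B(\tilde x,R))\gtrsim R\cdot\inf_k\tilde\mu(B(\gamma^k\tilde x,\epsilon R))$ is false. Since $\gamma$ is an isometry preserving $\tilde\mu$, every ball in the infimum has measure $\tilde\mu(B(\tilde x,\epsilon R))$. On the other hand, \eqref{BG} gives $\tilde\mu(B(\tilde x,R))\le\epsilon^{-N}\tilde\mu(B(\tilde x,\epsilon R))$, so your inequality would force $R\lesssim\epsilon^{-N}$. Equivalently, a doubling measure allows only $O(\epsilon^{-N})$ disjoint balls of radius $\epsilon R$ inside a ball of radius $CR$, whatever the shape of the orbit of $\tilde x$. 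The disjointness of the $\gamma^kB(\tilde x,\ell/2)$ is also unjustified, because $d(\tilde x,\gamma^j\tilde x)$ may be smaller than $\ell$ for some $j\ge 2$. Finally, \pref{muvollower} applied on $\tilde M$ only gives cubic growth upstairs, which contradicts nothing since $N>3$.

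The missing idea is that the sets with many disjoint translates are pieces of a fundamental domain, not balls upstairs, and their volume is computed downstairs.
\begin{enumerate}[i)]
\item Equip $\tilde M$ with $\pi^*\bg$ and $\tilde\mu=\pi^*\mu$. Let $\cF$ be the interior of the Dirichlet fundamental domain of the full deck group anchored at $\tilde o$, with $o=\pi(\tilde o)$.
\item Since $\pi$ is injective on $\cF$ and $\pi(\overline{\cF}\cap B(\tilde o,R))=B_{\bg}(o,R)$ (Anderson), one has $\tilde\mu(\cF\cap B(\tilde o,R))=\mu(B_{\bg}(o,R))\ge CR^3$. This uses \pref{muvollower} on $M$ at the single point $o$, so no uniformity of the factor $u^{-1/\lambda}$ is needed.
\item As $\gamma$ has infinite order, the sets $\gamma^j(\cF\cap B(\tilde o,R))$, $0\le j\le k$, are pairwise disjoint and of equal measure. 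They lie in $B(\tilde o,R+kL)$, where $L=d(\tilde o,\gamma\tilde o)$.
\item Bishop--Gromov then yields $C(k+1)R^3\le\tilde\mu(B(\tilde o,1))(R+kL)^N$. Taking $R=k$ contradicts $N<4$.
\end{enumerate}
Your fallback sentence (``cubic growth downstairs, the cyclic cover adds a linear factor'') points toward this. It would also work with $\widehat M$ as base and $\langle\gamma\rangle$ as deck group. As written, however, this case is not proved.
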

\proof In the setting of \tref{3D-full}, we know that the Riemannian manifold $(M,\bg)$ is complete, satisfies a Sobolev inequality (\pref{Sobbarg})  and that it has non negative scalar curvature (\lref{scalarbarg}). Hence the result follows from an application of  \cref{simpleconnexe} and by  he fact that the universal cover of $M$ is contractible.

In the setting of \tref{3D-neg}, we use the geometry of the weighted Riemannian manifold $(M,\bg,\mu)$. Let $\pi \colon \tilde M\rightarrow M$ be the universal cover of $M$, we endowed it with the metric $\tilde g=\pi^*\bg$ and the measure $\tilde\mu=\pi^*\mu.$ The  weighted Riemannian manifold $(\tilde M,\tilde g,\tilde \mu)$ satisfies the Bakry-Emery condition $\BE(0,N)$ and the assumptions made in \tref{3D-neg} guarantees that $$N=3+\frac{1}{\lambda-1}<4.$$
We know that every non trivial element in $\pi_1(M)$ has infinite order. Hence assume that $\pi_1(M)$ is non-trivial and let $\gamma\in \pi_1(M)\setminus\{e\}$.
Now let's fix a point $\tilde o\in \tilde M$ and let $o=\pi(\tilde o)$. Let $\cF\subset \tilde M$ the interior of the Dirichlet fundamental domain anchored at $\tilde o:$
$$\cF=\{x\in \tilde M\colon d_{\tilde g}(\tilde o, x)< d_{\tilde g}(\tau(\tilde o), x)\, ;\, \forall \tau \in\pi_1(M) \}$$
Let $L=d_{\tilde g}(\tilde o, \gamma(\tilde o))$ so that for every $k\in \N$:
$$d_{\tilde g}(\tilde o, \gamma^k(\tilde o))\le kL.$$
Hence 
$$\bigcup_{\ell=0}^{k} \gamma^{\ell}\left(\cF\cap B_{\tilde g}(\tilde o,R)\right)\subset B_{\tilde g}(\tilde o,R+kL).$$
The Bishop-Gromov comparison theorem yields that if $R\ge 1$ then
\begin{equation}\label{uppervol}
\tilde \mu\left(B_{\tilde g}(\tilde o,R+kL)\right)\le \tilde \mu\left(B_{\tilde g}(\tilde o,1)\right)\ \left(R+kL\right)^{N}.\end{equation}

We also know  the sets $\gamma^{\ell}\left(\cF\cap B_{\tilde g}(\tilde o,R)\right)$ are pairwise disjoints and have the same $\tilde \mu$-volume. And according to Anderson \cite{Anderson_1990}, we know that for every $R>0$:
$$\pi\left(\overline{\cF}\cap B_{\tilde g}(\tilde o,R)\right)=B_{\bg}( o,R) \text{ and } \mu\left(\pi\left(\cF\cap B_{\tilde g}(\tilde o,R)\right)\right)=\mu\left(B_{\bg}( o,R)\right). $$
Now using the lower volume estimate (\pref{muvollower}), one knows that for some positive constant $C$ such that 
$$\forall R>0\colon C R^3\le\mu\left( B_{\bg}( o,R)\right).$$ 
Eventually, one obtains :
$$C (k+1)R^3\le \mu\left(B_{\tilde g}(\tilde o,1)\right)\ \left(R+kL\right)^{N}.$$
If one chooses $R=k$, we see that the fact that $N<4$ is in contradiction with the fact that $\gamma$ has infinite order.
\endproof\subsection{Simple connectness at infinity}
\begin{prop} Under the assumption of \tref{3D-neg} or of \tref{3D-full}, $M$ is simply connected at infinity.
\end{prop}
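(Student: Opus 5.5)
The plan is to apply \pref{levelgreen} to the manifold $(M^3,g)$ itself, with the Green kernel $G_o$ of $g$. This gives an exhaustion of $M$ by the bounded open sets $\Omega_k=\{G_o>\epsilon_k\}$, whose boundaries are spheres. Since $M$ is contractible, this yields simple connectivity at infinity.

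\textbf{Step 1: the hypotheses of \pref{levelgreen}.}
\begin{enumerate}[i)]
\item The bound $\scal_g\ge -3\Ricm$ and $\Ricm\in L^{3/2}$ give $(\scal_g)_-\in L^{3/2}$.
\item Write $G_o=\frac1{4\pi b}$. Then $\frac{|dG_o|^3}{G_o^2}=\frac{1}{4\pi}\frac{|db|^3}{b^4}$, which is exactly the integrand of \pref{Greengradient} for $n=3$. So the integral condition holds on $\{b\ge1\}$. The region $\{b<1\}\setminus B(o,1)$ is relatively compact and away from the pole, so it contributes a finite amount.
\item The regular level sets are compact because $G_o\to0$ at infinity, by the remark after \pref{levelgreen}.
\item For connectedness, I would use the remark after \pref{levelgreen}: it suffices that compactly supported closed $1$-forms are exact. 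The earlier steps show $M$ is simply connected and, by \pref{prop-noends}, has one end. Then $H^1(M)=0$, so any closed compactly supported $1$-form is $dv$ for some $v$. Since $M$ has one end, $v$ is constant outside a compact set. The remark's argument then shows that a disconnected level set $\{G_o=t\}$ would force a non-exact such form, which is impossible. Concretely, $\{G_o<t\}$ is connected because of the one end, and $\{G_o>t\}$ is connected because $o$ lies in the unique bounded component. Hence the smooth separating surface $\{G_o=t\}$ is connected.
\end{enumerate}

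\textbf{Step 2: topology of $\overline{\Omega_k}$.} Each $\overline{\Omega_k}$ is a compact $3$-manifold with boundary $\bS^2$, sitting inside $M$. Since $M$ is simply connected, it is irreducible by the Poincaré conjecture (Perelman). The embedded sphere $\partial\Omega_k$ therefore bounds a ball in $M$. That ball must be the bounded side $\overline{\Omega_k}$, because the other side is unbounded. So $\overline{\Omega_k}$ is a closed $3$-ball.

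\textbf{Step 3: simple connectivity at infinity.} Take a compact set $K$. Choose $k$ with $K\subset\Omega_k$, which is possible since the $\Omega_k$ exhaust $M$ as $\epsilon_k\to0$. We need $\pi_1(M\setminus\overline{\Omega_k})=0$. By van Kampen applied to $M=\overline{\Omega_k}\cup(M\setminus\Omega_k)$, glued along $\bS^2$, we get $\pi_1(M)=\pi_1(M\setminus\Omega_k)$, and this group is trivial. Hence every loop outside $\overline{\Omega_k}$, and in particular outside $K$, is null-homotopic outside $K$. By Stallings' theorem (or directly, since $M$ is then an increasing union of balls), $M\simeq\R^3$.

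The main obstacle I expect is Step 1(iv), the connectedness of the level sets. This is where simple connectivity and the one-end property must be combined. Some care is also needed because \pref{levelgreen} only gives spherical boundaries along a sequence of regular values, not for all of them. The case $\Ricm\equiv0$ needs separate mention: then $u$ is constant and $\bg$ is a rescaling of $g$, and the same argument applies with \pref{Greengradient} trivially available.
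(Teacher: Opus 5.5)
Your proposal is correct and follows the paper's proof. You verify the four hypotheses of \pref{levelgreen}, and for (iv) you combine $\pi_1(M)=\{1\}$ with the one-end property from \pref{prop-noends} and the maximum principle, which is more careful than the paper's remark; you then conclude by van Kampen. Your Step 2 is superfluous, and its justification is inaccurate, since a simply connected open $3$-manifold need not be irreducible (e.g.\ $\bS^2\times\R$); van Kampen across the sphere already gives $\pi_1(M)\cong\pi_1(\overline{\Omega_k})*\pi_1(M\setminus\Omega_k)$, so both factors are trivial without invoking Perelman.
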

\proof  We noticed first that the assumption made in  both Theorems and \pref{Greengradient} implies that $(M,g)$ satisfies the first two hypothesis of \pref{levelgreen}.

As $M$ is simply connected and as $(M,g)$ satisfies the Sobolev inequality \eqref{Sob}, we known that $(M,g)$ satisfies the last two hypothesis of \pref{levelgreen}. Hence $M$ satisfies the the conclusion \pref{levelgreen}: there is a exhaustion  $$M=\cup_{\ell\in \N} \Omega_\ell$$ such that $\partial \Omega_\ell\simeq \bS^2$.
Van Kampen's theorem yields that each $M\setminus  \Omega_\ell$ is simply connected hence $M$ is  simply connected at infinity.
\endproof

\appendix
\section{Two technical results}
Here we record some results that have been used repeatedly in the paper.

\begin{prop}\label{Barta}Let $(M^n,g)$ be a complete Riemannian open manifold and let $V\in L^\infty_{loc}(M)$ then the following are equivalent:
\begin{enumerate}[i)]
\item The Schrödinger operator $\Delta_g+V$ is non negative: $$\forall \phi\in \mcC_c^\infty(M)\colon \int_M \left[ |d\phi|^2_g+V\phi^2\right|\;\dv_g\ge 0.$$
\item There is a positive function $u\in W^{2,2n}_{loc}(M)$, such that $\Delta_g u+Vu=0$.
\item There is a positive function $u\in \ W^{2,2n}_{loc}(M)$, such that $\Delta_g u+Vu\ge0$.
\end{enumerate}
\end{prop}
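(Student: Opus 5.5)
The plan is to prove the cycle i) $\Rightarrow$ ii) $\Rightarrow$ iii) $\Rightarrow$ i). Two of the three arrows are easy; the real work is i) $\Rightarrow$ ii).

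\textbf{ii) $\Rightarrow$ iii)} is trivial.

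\textbf{iii) $\Rightarrow$ i)} is the classical Barta/ground-state argument. Let $u>0$ satisfy $\Delta_g u+Vu\ge 0$, with the paper's convention $\Delta_g=d^*d$. Given $\phi\in\mcC^\infty_c(M)$, write $\phi=u\psi$ with $\psi=\phi/u\in W^{2,2n}_{loc}$; this is legitimate since $u$ is continuous, as $W^{2,2n}_{loc}\subset \mcC^1$. Expanding $|d(u\psi)|^2$ and integrating by parts the cross term $\int u\psi\la du,d\psi\ra=\frac12\int u\la du,d\psi^2\ra$ gives
$$\int_M\left[|d\phi|^2+V\phi^2\right]\dv_g=\int_M u^2|d\psi|^2\dv_g+\int_M (\Delta_g u+Vu)\,u\psi^2\dv_g\ge 0 .$$
The $W^{2,2n}_{loc}$ regularity is exactly what justifies this integration by parts.

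\textbf{i) $\Rightarrow$ ii)} is the standard Fischer-Colbrie–Schoen construction. Fix $o\in M$ and an exhaustion by smooth bounded domains $\Omega_k\ni o$, for instance regularized balls.

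1. For $\varepsilon>0$, the operator $\Delta_g+V+\varepsilon$ is coercive on $W^{1,2}_0(\Omega_k)$ by i). Its first Dirichlet eigenvalue on $\Omega_k$ is at least $\varepsilon$, so the problem $(\Delta_g+V+\varepsilon)w=0$ in $\Omega_k$, $w=1$ on $\partial\Omega_k$, has a unique solution.
2. By the maximum principle for this coercive operator, $w$ is positive in $\Omega_k$ (compare with the first eigenfunction argument, or use that $w_-$ vanishes by coercivity).
3. Elliptic $L^p$ theory with $V\in L^\infty_{loc}$ gives $w\in W^{2,p}_{loc}$ for all $p<\infty$, hence in particular $W^{2,2n}$.
4. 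Normalize $u_k=w/w(o)$. The Harnack inequality for $\Delta+V$ with bounded potential on compact sets gives, for each compact $K$ and $k$ large, $C_K^{-1}\le u_k\le C_K$ on $K$, uniformly in $k$.
5. The $W^{2,p}$ estimates are then uniform on compacts. A diagonal extraction, together with the compact embedding $W^{2,2n}\subset\mcC^1$, yields a limit $u_\varepsilon\ge 0$ with $u_\varepsilon(o)=1$ solving $\Delta u_\varepsilon+(V+\varepsilon)u_\varepsilon=0$.
6. Repeat the Harnack-plus-compactness step as $\varepsilon\to 0$ to obtain $u$ with $u(o)=1$ and $\Delta u+Vu=0$.
7. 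Positivity everywhere follows from Harnack and connectedness.

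The main obstacle is the uniform local Harnack inequality in step 4, together with the existence and positivity in steps 1–2. Both are standard, via De Giorgi–Nash–Moser for operators with $L^\infty_{loc}$ potential, so I would cite them rather than reprove them. The $\varepsilon$-shift is included precisely so that the Dirichlet problems are uniquely solvable even when the bottom of the spectrum of $\Delta_g+V$ is $0$.
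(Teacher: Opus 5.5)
Your proof is correct and follows the route the paper indicates. The paper gives no argument of its own: it cites Barta's trick for iii)$\Rightarrow$i) and Fischer-Colbrie--Schoen (Moss--Piepenbrink in $\R^n$) for i)$\Rightarrow$ii), and your argument (exhaustion, Dirichlet problems, Harnack normalization at $o$, uniform $W^{2,p}$ compactness) is exactly that construction. The only variation is your $\varepsilon$-shift, which plays the role of the strict domain monotonicity of the first Dirichlet eigenvalue used classically to solve directly for $\Delta_g+V$; it costs you one extra limit.
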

\begin{rems}
\begin{enumerate}[i)]
\item This result is stated in a slightly stronger form in \cite[Lemma 3.10]{pigola}
\item The implication iii)$\Rightarrow$ i) is known as the Barta's trick.
\item The most difficult implication is i)$\Rightarrow$ ii) ; it is due to Moss and Piepenbrink in a Euclidean setting \cite{MP} and to Fischer-Colbrie and Schoen in the Riemannian setting \cite{FS}.
\end{enumerate}
\end{rems}

\begin{prop}\label{revsob} Let $(M^n,g)$ be a complete Riemannian manifold and let $\pi\colon \widehat M\rightarrow M$ be a  covering endowed with the metric $\widehat g=\pi^*g$.
\begin{enumerate}[i)]
\item If  $W$ is locally bounded and if the Schrödinger operator $\Delta_g+W$ is non negative then the Schrödinger operator $\Delta_{\widehat g}-W\circ\pi$ is also non negative.
\item If $n>2$ and if $(M^n,g)$ satisfies the Sobolev inequality \eqref{Sobn}, then $(\widehat M,\widehat g)$ satisfies the same Sobolev inequality (with the same Sobolev constant $\upgamma$):
$$\forall \varphi\in \mcC^\infty_c(\widehat M)\colon \upgamma\left(\int_{\widehat M} \varphi^{\frac{2n}{n-2}}\dv_{\widehat g}\right)^{1-\frac 2n}\le \int_{\widehat M} |d\varphi|_{\widehat g}^2\dv_{\widehat g}.$$\end{enumerate}
\end{prop}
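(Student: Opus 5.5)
Both assertions are lifting statements, and I would prove them by pushing test functions on $\widehat M$ down to $M$ through the covering.

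\textbf{Part ii).} For the Sobolev inequality I would use a fiberwise $\ell^2$ pushforward. Given $\varphi\in\mcC^\infty_c(\widehat M)$, define on $M$
$$\psi(x)=\Big(\sum_{\hat x\in\pi^{-1}(x)}\varphi(\hat x)^2\Big)^{1/2}.$$
The sum is finite because $\varphi$ has compact support, so $\psi$ is compactly supported and Lipschitz. On an evenly covered open set the sum is a finite sum of smooth functions, and the triangle inequality for the $\ell^2$ norm gives
$$|d\psi|(x)\le\Big(\sum_{\hat x\in\pi^{-1}(x)}|d\varphi|^2(\hat x)\Big)^{1/2}.$$
Integrating over $M$ and using that $\pi$ is a local isometry, we get $\int_M|d\psi|^2\dv_g\le\int_{\widehat M}|d\varphi|^2\dv_{\widehat g}$. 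Applying \eqref{Sobn} to $\psi$ then gives
$$\int_M\psi^{\frac{2n}{n-2}}\dv_g=\int_M\Big(\sum_{\hat x}\varphi^2\Big)^{\frac{n}{n-2}}\dv_g\ge\int_M\sum_{\hat x}|\varphi|^{\frac{2n}{n-2}}\dv_g=\int_{\widehat M}|\varphi|^{\frac{2n}{n-2}}\dv_{\widehat g}.$$
Here I use that $\frac{n}{n-2}\ge1$, so $(\sum a_i)^q\ge\sum a_i^q$ for $a_i\ge0$. This yields the Sobolev inequality on $\widehat M$ with the same constant $\upgamma$. Since $\psi$ is only Lipschitz, we also need \eqref{Sobn} to hold for Lipschitz compactly supported functions, which follows by a standard approximation.

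\textbf{Part i).} (I read the intended conclusion as nonnegativity of $\Delta_{\widehat g}+W\circ\pi$; the sign in the statement looks like a typo.) Here \pref{Barta} is the natural tool. By i)$\Rightarrow$ii) of \pref{Barta} there is a positive $u\in W^{2,2n}_{loc}(M)$ with $\Delta_g u+Wu=0$. Its lift $\hat u=u\circ\pi$ is positive and lies in $W^{2,2n}_{loc}(\widehat M)$. Because $\pi$ is a local isometry it satisfies $\Delta_{\widehat g}\hat u+(W\circ\pi)\hat u=0$. Also $W\circ\pi$ is locally bounded. The implication iii)$\Rightarrow$i) of \pref{Barta} (Barta's trick) then gives
$$\int_{\widehat M}\big(|d\varphi|^2+(W\circ\pi)\varphi^2\big)\dv_{\widehat g}\ge0\quad\text{for all }\varphi\in\mcC^\infty_c(\widehat M).$$
Barta's trick itself is elementary: write $\varphi=\hat u f$ and integrate by parts, which gives $\int\hat u^2|df|^2\ge0$.

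The one genuinely nontrivial ingredient is the existence of the positive solution $u$, i.e.\ the Fischer-Colbrie--Schoen direction i)$\Rightarrow$ii) of \pref{Barta}, which I take as given. Everything else is a local computation, since $\pi$ is a local isometry.
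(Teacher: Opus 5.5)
Your proposal is correct. Part i) is the paper's argument: lift a positive solution given by \pref{Barta} and apply Barta's trick on $\widehat M$. Your reading of the conclusion as nonnegativity of $\Delta_{\widehat g}+W\circ\pi$ is what that argument actually proves.

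For part ii) your route is genuinely different from the paper's.

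\textbf{The paper's route.} It first uses $L^{\frac n2}$--$L^{\frac{n}{n-2}}$ duality to rewrite \eqref{Sobn} as the family of inequalities $\int_M V\varphi^2\dv_g\le\upgamma^{-1}\|V\|_{L^{\frac n2}}\int_M|d\varphi|^2\dv_g$ for nonnegative $V\in\mcC^0_c(M)$. Given $V$ on $\widehat M$, it pushes the potential down by the fiberwise $\ell^{\frac n2}$ sum $\widehat Q(x)=\big(\sum_{y\in\pi^{-1}(\pi(x))}V^{\frac n2}(y)\big)^{\frac 2n}=Q\circ\pi$, which preserves the $L^{\frac n2}$ norm. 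It then invokes part i) to lift the nonnegativity of $\Delta_g-\upgamma\|V\|^{-1}_{L^{\frac n2}}Q$ to $\widehat M$, and concludes with $V\le\widehat Q$.

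\textbf{Comparison.} The paper pushes down the potential, the dual variable; you push down the test function itself by the fiberwise $\ell^2$ sum. What your version buys:
\begin{enumerate}[i)]
\item It is more elementary. The paper's proof of ii) inherits, through i), the Fischer-Colbrie--Schoen existence theorem; you avoid it entirely. You need only three ingredients:
\begin{itemize}
\item the a.e.\ bound $|d|\Phi||\le|d\Phi|$, where $\Phi$ is locally a finite vector of smooth functions (after shrinking the evenly covered set so that only finitely many sheets meet $\supp\varphi$);
\item the superadditivity of $t\mapsto t^{\frac{n}{n-2}}$;
\item the routine extension of \eqref{Sobn} to compactly supported Lipschitz functions.
\end{itemize}
\item It extends verbatim to any $L^p$--$W^{1,q}$ Sobolev inequality with $p\ge q$, using fiberwise $\ell^q$ sums, including the isoperimetric case $q=1$.
\end{enumerate}
The paper's route fits its spectral viewpoint, since it makes ii) a corollary of the lifting principle i) for Schr\"odinger operators. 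However, it is tied to the quadratic case $q=2$.
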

\proof
For the first point, the assumption implies that we can find some positive solution $u\in  W^{2,2n}_{loc}(M)$  of the equation $\Delta_g u=Vu$ and then $v=u\circ\pi$ is also a positive solution of the equation $\Delta_{\widehat g} v=V\circ \pi v$. And with \pref{Barta}, we know that the  Schrödinger operator $\Delta_{\widehat g}-V\circ\pi$ is non negative.

Concerning the second point, with a $L^{n/2}-L^{n/(n-2)}$ duality's argument, it is easy to show that the Sobolev inequality \eqref{Sobn} is equivalent to
the fact that for any  non negative  $V\in \mcC_c^0(M)$ we have that 
$$\forall \varphi\in \mcC^\infty_c( M)\colon\int_M  V\varphi^{2}\dv_{g}\le \upgamma^{-1} \| V\|_{L^{\frac n 2}(M)} \int_M |d\varphi|_{g}^2\dv_{ g}.$$
Now let $V\in \mcC_c^0(\widehat M)$ being non negative and define 
$\widehat Q$ by
$$\widehat Q(x)=\left(\sum_{y\in\pi^{-1}\{\pi(x)\} } V^{\frac n2}(y)\right)^{\frac 2n}.$$
By construction, there is  $Q\in \mcC^0_c( M)$ such that $\widehat Q=Q\circ \pi$ and moreover
$$\| Q\|_{L^{\frac n 2}(M)}=\| V\|_{L^{\frac n 2}(\widehat M)}.$$
The Sobolev inequality on $(M,g)$ implies that
$$\forall \varphi\in \mcC^\infty_c( M)\colon\int_M  Q\varphi^{2}\dv_{g}\le \upgamma^{-1} \| V\|_{L^{\frac n 2}(\widehat M)} \int_M |d\varphi|_{g}^2\dv_{ g}.$$
Let $$W=\upgamma \| V\|^{-1}_{L^{\frac n 2}(\widehat M)} Q,$$ then one gets that the   Schrödinger operator $\Delta_{ g}-W$ is non negative. The first step implies that the  Schrödinger operator $\Delta_{\widehat g}-W\circ\pi$ is non negative, that is to say
$$\forall \varphi\in \mcC^\infty_c( \widehat M)\colon\int_M  \widehat Q \varphi^{2}\dv_{\widehat g}\le \upgamma^{-1} \| V\|_{L^{\frac n 2}(\widehat M)} \int_M |d\varphi|_{\widehat g}^2\dv_{\widehat g}.$$
The fact that $V\le \widehat Q $ implies then that 

$$\forall \varphi\in \mcC^\infty_c( \widehat M)\colon\int_M  V \varphi^{2}\dv_{\widehat g}\le \upgamma^{-1} \| V\|_{L^{\frac n 2}(\widehat M)} \int_M |d\varphi|_{\widehat g}^2\dv_{\widehat g}.$$
And the Sobolev inequality holds on $(\widehat M,\widehat g)$ with the same Sobolev constant $\upgamma$.

\endproof

\bibliographystyle{alpha} 
\bibliography{biblio.bib}
\end{document}